\documentclass[12pt]{amsart}

\usepackage{amsmath, amssymb, amscd, newlfont}


%
\setlength{\oddsidemargin}{0in}
\setlength{\evensidemargin}{0in}
\setlength{\textwidth}{6.5in}
\setlength{\textheight}{8.5in}

\newcommand{\End}{{\mathrm{End}}}
\newcommand{\Hom}{{\mathrm{Hom}}}

\newcommand{\Ann}{{\mathrm{Ann}}}
\newcommand{\Irr}{{\mathrm{Irr}}}

\newcommand{\calA}{{\mathcal{A}}}

\newcommand{\calO}{{\mathcal{O}}}

\newcommand{\calV}{{\mathcal{V}}}
\newcommand{\calW}{{\mathcal{W}}}

\usepackage[mathscr]{eucal}

\numberwithin{equation}{section}

\newtheorem{Lem}[equation]{Lemma}
\newtheorem{Def}[equation]{Definition}
\newtheorem{Thm}[equation] {Theorem}
\newtheorem{Cor}[equation]{Corollary}
\newtheorem{Rem}[equation]{Remark}


\title
  {On Representations of Reductive $p$--adic Groups over $\mathbb Q$--algebras}
\author{ Goran Mui\' c}

\address{ Department of Mathematics, Faculty of Science,
University of Zagreb,
Bijeni\v cka 30, 10000 Zagreb,
Croatia}
\email{gmuic@math.hr}
\subjclass[2010]{11E70, 22E50}
\keywords{Reductive $p$--adic groups, $\mathbb Q$--admissible representations, Hecke algebras}
\thanks{The  author acknowledges Croatian Science Foundation grant no. 3628.}

\begin{document}
\maketitle
\begin{abstract}
  In this paper we study  certain category of smooth modules for reductive $p$--adic groups analogous to
  the usual smooth complex representations but with the field of complex numbers replaced by a $\mathbb Q$--algebra.
  We prove some fundamental results in these settings, and as an example we give a classification of admissible unramified irreducible
  representations proving by reduction to the complex case that if the space of $K$--invariants is finite dimensional in an irreducible
  smooth unramified representation that the representation is admissible. 
\end{abstract}

\section{Introduction}\label{agm}

In this paper we define and study  certain category of smooth modules for reductive $p$--adic groups analogous to
the usual smooth complex representations (\cite{Be, Be1, BZ, BZ1,cas}). Nowadays there is an active current research
in the field of complex representation theory as one can observe from the review articles \cite{tad1} and \cite{tad2}.
Representations in positive characteristic are also well understood thanks to the recent works of Henniart, Vign\' eras and others (see \cite{hv-2}).
But the representations of reductive $p$-adic groups on the vector spaces over extensions of $\mathbb Q$ such as number fields are not well--understood
beyond the study of fields of definition of complex representations \cite{robert}. In this paper we start to consider such problems.
On the example of a classification of unramified representations the reader will realize how reach and more interesting is this theory than the complex one
(but it is seems a lot more simpler than the case of positive characteristic \cite{hv-1}). It is based on the description of Satake isomorphism due to Gross \cite{gross}.

\vskip .2in
As with the approach  in  positive characteristic mentioned above, we use extensively Hecke algebra approach combined with the theory of semisimple algebras to reduce to the case
of algebraically closed field.  This not new, basic ideas can be found already in the book by Curtis and Reiner \cite{currei}.  The theory in positive characteristic is more involved and
it is based on a rather deep decomposition theorem (\cite{hv-2} Theorem I.1). In our case, we use just  use very basic theory of semismple rings (\cite{lang}, Chapter XVII) due to fact that we work
in characteristic zero.  We expect application
in the case of complex representations too but we leave it for another occasion.

\vskip .2in
In this paper rings are always associative commutative rings with $1\neq 0$ (as in \cite{mats}). Also homomorphism of rings always send $1$ onto $1$. A subring of a ring always contains the identity.
Ring modules are always unital i.e., $1$ acts as identity. We fix a non---Archiemedean local field $k$. Let  $G$ is a reductive $p$--adic group i.e., which by abuse of notation is 
is a group of $k$--points of a Zariski connected reductive group defined over $k$. As indicated at appropriate places, for some results in the paper we may assume that $G$ is just an $l$--group
(see \cite{BZ}) but for the the introduction with stick with the assumption that $G$ is a reductive $p$--adic group. I was informed by Casselman that new version of his classical book \cite{cas}
would contain extensive theory of parabolic induction and Jacquet modules for smooth representations with coefficients in the rings (see Definition \ref{agm-1}).

\vskip .2in 
We continue with expected form of the definition of modules that we consider. The following Definitions \ref{agm-1} and \ref{agm-2}  essentially taken from (\cite{Be}, 1.16, but see also
\cite{vig}, Chapter I):

\begin{Def}\label{agm-1}  Let $\mathcal A$ be ring. A $(\mathcal A, \ G)$--module is a 
$\mathcal A$--module  together with a homomorphism $G\longrightarrow GL_{\mathcal A}(V)$
such that  every element  in $V$ has an open stabilizer in $G$.  
\end{Def} 

Obviously, when $\mathcal A=\mathbb C$ we obtain usual smooth complex representation of $G$. 
More interesting example is when we use a center $\mathcal Z(G)$ of the category of smooth complex representations of $G$  (see \cite{Be}). The book
by Vign\' eras (\cite{vig}, Chapter I) contains many basis results for such modules. 

\vskip .2in 

Definition \ref{agm-1} implies that 
$$
V=\cup_L \ V^L \ \ \text{(the union ranges over all open compact subgroups of $G$)},
$$
and every $V^L$ is a  $\mathcal A$--module. 
 
\vskip .2in

When $\mathcal A=\mathbb C$, the definition below gives us  usual complex admissible representation  of $G$.

\begin{Def}\label{agm-2}  A $(\mathcal A, \ G)$--module $V$ is  
$\mathcal A$--admissible if $V^L$ is finitely generated $\mathcal A$--module for all open--compact subgroups $L\subset G$.
\end{Def}

\vskip .2in
We consider category 
$$
\mathcal C\left(\mathcal A, \ G\right)
$$
of all  $(\mathcal A, \ G)$--modules.
Obviously, $\mathcal C\left(\mathcal A, \ G\right)$ is an Abelian category. 

In what follows we assume that $\mathcal A$ is   $\mathbb Q$--algebra. Then, as expected, the functor $V\longmapsto V^L$
from the category  $\mathcal C\left(\mathcal A, \ G\right)$ into category of $\mathcal A$--modules is exact, for all open compact subgroups $L\subset G$ (see Lemma \ref{agm-31}).
An important consequence of the fact that we work with rings is the following fundamental result (see Lemma \ref{agm-3}):

\begin{Lem}\label{int-agm-3} Let $\mathfrak a\subset \mathcal A$ be an ideal of $\mathcal A$.
  Then, for any  $(\mathcal A, \ G)$--module $V$,
and  for any open compact subgroup $L\subset G$,  we have the following:
$$
 \left(\mathfrak a V\right)^L=\mathfrak a V^L.
 $$
 \end{Lem}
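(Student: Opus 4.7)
The plan is to prove the two inclusions separately. The inclusion $\mathfrak{a} V^L \subseteq (\mathfrak{a} V)^L$ is automatic: since $G$ acts by $\mathcal{A}$-linear maps on $V$, the subset $V^L$ is an $\mathcal{A}$-submodule, and multiplication by an element of $\mathfrak{a}$ preserves $L$-fixed vectors, so $\mathfrak{a}V^L \subseteq V^L \cap \mathfrak{a}V = (\mathfrak{a}V)^L$. The substantive content is the reverse inclusion $(\mathfrak{a}V)^L \subseteq \mathfrak{a}V^L$.

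For this, I would use an averaging argument, taking advantage of the assumption that $\mathcal{A}$ is a $\mathbb{Q}$-algebra. Take $w \in (\mathfrak{a}V)^L$ and write $w = \sum_{i=1}^n a_i v_i$ with $a_i \in \mathfrak{a}$ and $v_i \in V$. By the smoothness hypothesis (Definition \ref{agm-1}), each $v_i$ has an open stabilizer $L_i \subset G$. Setting
$$L' = L \cap L_1 \cap \cdots \cap L_n,$$
we obtain an open (hence open of finite index in $L$) compact subgroup $L' \subseteq L$ such that $v_i \in V^{L'}$ for every $i$. Since $[L : L']$ is a positive integer and $\mathcal{A}$ is a $\mathbb{Q}$-algebra, it is invertible in $\mathcal{A}$, and we may define the averaging operator
$$e_L \colon V^{L'} \longrightarrow V^L, \qquad e_L(v) = \frac{1}{[L:L']} \sum_{g \in L/L'} g \cdot v,$$
which is well-defined (independent of coset representatives since $v \in V^{L'}$), $\mathcal{A}$-linear, and restricts to the identity on $V^L$.

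Applying $e_L$ to the relation $w = \sum_i a_i v_i$, and using $w \in V^L$ together with $\mathcal{A}$-linearity of the $G$-action, I get
$$w = e_L(w) = \sum_{i=1}^n a_i \, e_L(v_i),$$
with each $e_L(v_i) \in V^L$. Hence $w \in \mathfrak{a}V^L$, completing the proof. The only real obstacle is making the averaging operator available, which is exactly why the hypothesis that $\mathcal{A}$ is a $\mathbb{Q}$-algebra enters; otherwise one would be stuck with an element of $\mathfrak{a}V$ whose summands individually need not be $L$-fixed.
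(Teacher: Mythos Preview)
Your proof is correct and follows essentially the same averaging argument as the paper: write an element of $(\mathfrak{a}V)^L$ as a finite $\mathfrak{a}$-combination, pass to a small enough open subgroup $L'\subset L$ fixing the summands, and average over $L/L'$ using that $[L:L']$ is invertible in the $\mathbb{Q}$-algebra $\mathcal{A}$. The only cosmetic difference is that the paper first multiplies by $\#(L/L')$ and then (implicitly) divides, whereas you package this as the idempotent $e_L$; the content is identical.
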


\vskip .2in
Since we work with the rings it is natural to consider the annhilator $\Ann_{\mathcal A}(V)$ in $\mathcal A$ of a
$(\mathcal A, \ G)$--module $V$. For irreducible but not $\mathcal A$--admissible modules $V$, the annhilator is just a prime ideal
(see Lemma \ref{agm-400} and the example after the proof of that lemma). But when $V$ is irreducible
and $\mathcal A$ the situation is much more manageable as can be seen from the theorem that we recall  below (see Theorem \ref{agm-4}).

\begin{Thm}\label{int--agm-4} Assume that $\mathcal A$ is a $\mathbb Q$--algebra.
  Then, we have the following:
  \begin{itemize}
  \item[(i)] For every irreducible  $\mathcal A$--admissible $(\mathcal A, \ G)$--module $V$,  the annhilator of $V$ is a
    maximal ideal. In fact, if we write $\mathfrak m= \Ann_{\mathcal A}(V)$, then  $V$ is an   irreducible $\mathcal A/\mathfrak m$--admissible $(\mathcal A/\mathfrak m, \ G)$--module.
\item[(ii)] Let $\Irr_{\mathfrak m}$ be  the set of equivalence classes of 
 irreducible $\mathcal A/\mathfrak m$--admissible $(\mathcal A/\mathfrak m, \ G)$--modules. Then,  the disjoint union 
 $$
 \cup_{\mathfrak m} \ \Irr_{\mathfrak m}  \ \ \text{($\mathfrak m$ ranges over maximal ideal of $\mathcal A$)}
 $$
 can be taken to be  the set of equivalence classes of  irreducible  $\mathcal A$--admissible 
 $(\mathcal A, \ G)$--modules.
\item[(iii)]  Assume that $\mathcal A$ is a  finitely generated $\mathbb C$--algebra. Let $\Irr(G)$ be the set of equivalence of
  complex irreducible admissible representations of $G$ (see \cite{BZ}). Let  $Max(\mathcal A)$ be the  set of all maximal ideals in $\mathcal A$. 
Then, the set $\Irr(G)\times Max(\mathcal A)$ parametrizes irreducible $\mathcal A$--admissible  $(\mathcal A, \ G)$--modules. 
\end{itemize}
\end{Thm}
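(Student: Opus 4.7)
For (i), the plan is first to show that $\mathfrak{m} = \Ann_\mathcal{A}(V)$ is a prime ideal, and then to upgrade primality to maximality.  Primality is easy:  for any $a \in \mathcal{A}$, the subset $aV$ is an $(\mathcal{A},G)$-submodule of $V$, so $aV$ is $0$ or $V$ by irreducibility; if $ab \in \mathfrak{m}$ with $b \notin \mathfrak{m}$, then $bV = V$ forces $aV = a(bV) = (ab)V = 0$.  Setting $B = \mathcal{A}/\mathfrak{m}$ and $K = \mathrm{Frac}(B)$, the same dichotomy shows that any lift of a nonzero element $b \in B$ acts bijectively on $V$: its kernel is a proper $(\mathcal{A},G)$-submodule, and its image is a nonzero one.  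The inverse is $(\mathcal{A},G)$-equivariant, so $V$ inherits a $K$-vector space structure commuting with $G$.  Choosing an open compact $L \subset G$ with $V^L \neq 0$, the $K$-action preserves $V^L$, and by admissibility $V^L$ is finitely generated over $B$, hence over $K$, so finite-dimensional over $K$, of some dimension $d \geq 1$.

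The maximality of $\mathfrak{m}$ then follows from a short algebraic observation.  Choosing a $K$-basis identifies $V^L \cong K^d$ as $K$-vector spaces, hence as $B$-modules; composing a $B$-surjection $B^n \twoheadrightarrow V^L$ with projection onto a single coordinate yields a $B$-surjection $B^n \twoheadrightarrow K$.  So $K$ is a finitely generated $B$-module, say $K = Bk_1 + \cdots + Bk_n$ with $k_i = a_i/b_i$; letting $c = \prod b_i$ gives $cK \subseteq B$, i.e.\ $K \subseteq c^{-1}B$.  Applying this to $c^{-2} \in K$ produces $b \in B$ with $c^{-2} = c^{-1}b$, whence $c^{-1} = b \in B$ and finally $K = c^{-1}B \subseteq B$.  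Thus $B$ is a field and $\mathfrak{m}$ is maximal; the second assertion of (i) is then automatic, since both irreducibility and admissibility pass to the $\mathcal{A}/\mathfrak{m}$-structure.

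Part (ii) is then formal.  Part (i) assigns to each equivalence class a unique maximal ideal $\mathfrak{m}$, placing it in $\Irr_\mathfrak{m}$.  Conversely, pullback along $\mathcal{A} \twoheadrightarrow \mathcal{A}/\mathfrak{m}$ turns each member of $\Irr_\mathfrak{m}$ into an irreducible $\mathcal{A}$-admissible module whose annihilator in $\mathcal{A}$ is exactly $\mathfrak{m}$, because the field $\mathcal{A}/\mathfrak{m}$ acts faithfully on any nonzero module.  For (iii), when $\mathcal{A}$ is a finitely generated $\mathbb{C}$-algebra, Hilbert's Nullstellensatz gives $\mathcal{A}/\mathfrak{m} \cong \mathbb{C}$ for every maximal $\mathfrak{m}$, so each $\Irr_\mathfrak{m}$ is canonically identified with $\Irr(G)$ and (ii) delivers the bijection with $\mathrm{Max}(\mathcal{A}) \times \Irr(G)$.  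The real work lies in the maximality step of (i); the force of the argument comes from combining irreducibility (which produces the $K$-structure on $V$) with admissibility (which forces $V^L$ to be finite-dimensional over $K$), with Lemma \ref{int-agm-3} ensuring that these features interact correctly at the level of $L$-invariants.
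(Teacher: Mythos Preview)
Your proof is correct, but your route to the maximality of $\Ann_{\mathcal A}(V)$ is genuinely different from the paper's. The paper argues by contradiction: if no maximal ideal $\mathfrak m$ annihilates $V$, then $\mathfrak m V = V$ for every $\mathfrak m$; Lemma~\ref{int-agm-3} then gives $\mathfrak m V^L = V^L$ for all $\mathfrak m$, and a Nakayama-type result (localizing at each maximal ideal) forces the finitely generated module $V^L$ to vanish. Your argument instead first establishes primality (as in the paper's Lemma on annihilators of irreducible modules), extends the $\mathcal A/\mathfrak p$-action to its fraction field $K$, and then uses admissibility to exhibit $K$ as a finite $\mathcal A/\mathfrak p$-module, whence $\mathcal A/\mathfrak p = K$. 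Your approach is more elementary in that it avoids localization and Nakayama entirely; the paper's approach, on the other hand, makes the role of Lemma~\ref{int-agm-3} transparent and generalizes more readily to statements about arbitrary ideals. One small remark: your closing sentence invokes Lemma~\ref{int-agm-3}, but your argument as written does not actually use it---the $K$-action preserves $V^L$ simply because $K$ acts by $G$-equivariant endomorphisms, and finite generation of $V^L$ over $B$ is immediate from admissibility. You may want to drop that reference or clarify where it enters.
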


\vskip .2in
Lemma \ref{int-agm-3} recalled above is of the fundamental importance in the proof of this theorem. Maintaining the notation of the theorem,
the identity action of $G$ on $\mathcal A/\mathfrak m$ is an example of irreducible $\mathcal A/\mathfrak m$--admissible
$(\mathcal A/\mathfrak m, \ G)$--module. We call it the trivial representation. Therefore, $\Irr_{\mathfrak m}$ is always non--empty.
When $G$ is a reductive $p$--adic group, we will prove the existence of other more complicated representation. But in the present generality, $G$ could be the trivial group, and
we can not do better.

\vskip .2in

Section \ref{eir} discusses the existence of irreducible
$(\mathcal A, \ G)$--modules  via  Hecke algebra  adapted from the classical complex case \cite{BZ}. (See also \cite{vig}, Chapter I, or \cite{cas}.)
Let  $\mathcal H(G, \ \mathcal A)$ be the Hecke algebra of $\mathcal A$--valued locally constant
and compactly supported functions on $G$ and $\mathcal H(G, \ L, \  \mathcal A)$ its sublagebra of all $L$--biinvariant functions in   $\mathcal H(G, \ \mathcal A)$ for $L\subset G$ open compact.
Usual relation between non--degenerate   $\mathcal H(G, \ \mathcal A)$--modules and smooth  $(G, \ \mathcal A)$--modules is valid as well as usual results for
irreducible   $(G, \ \mathcal A)$--module regarding irreducibility of $V^L$.
The main result of Section \ref{eir} is Theorem \ref{eir-9} in which we give very explicit construction of irreducible  $(G, \ \mathcal A)$--module $V$ from the known
irreducible module $V^L$ for $\mathcal H(G, \ L, \  \mathcal A)$. This is an improvement over the classical treatment in   (\cite{BZ}, Proposition 2.10 c)) and it is need for many
results that follow in this paper such as the description of ring of endomorphisms in Theorem \ref{aeir-1} which is the main result of Section \ref{aeir}, as well as the following 
fundamental result which is the main result of Section \ref{aaeir} (see Theorem \ref{aaeir-1}):

\begin{Thm}\label{int-aaeir-1} Assume that $\mathcal A$ is a field  (and then extension of $\mathbb Q$). Let $L\subset G$ be an open compact subgroup.
  Let $V$ be an irreducible $(\mathcal A, \ G)$--module such that $V^L\neq 0$ and  $\mathcal A$--finite dimensional (i.e., $V^L$ is an $\mathcal A$--admissible 
  irreducible $\mathcal H\left(G, \ L, \ \mathcal A\right)$--module). Then,  for any field extension $\mathcal A\subset \mathcal B$, there exists   irreducible  $(\mathcal B, \ G)$--modules
  $V_1, \ldots, V_t$ such that the following holds:
  \begin{itemize}
  \item[(i)] $V^L_i\neq 0$ for all  $1\le i\le t$. 
  \item[(ii)] $V^L_i$ are  $\mathcal B$--admissible  irreducible $\mathcal H\left(G, \ L, \ \mathcal B\right)$--modules.
  \item[(iii)] $V_{\mathcal B}\overset{def}{=}\mathcal B \otimes_{\mathcal A} V \simeq V_1\oplus \cdots \oplus V_t$ as $(\mathcal B, \ G)$--modules.
    \end{itemize}
\end{Thm}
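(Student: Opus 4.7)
The plan is to transfer the problem to the Hecke algebra side using Theorem \ref{eir-9}, decompose the finite-dimensional irreducible $\mathcal{H}(G, L, \mathcal{A})$-module $V^L$ after extension of scalars via basic Wedderburn theory for semisimple algebras, and then reconstruct the corresponding irreducible $(\mathcal{B}, G)$-modules as submodules of $V_{\mathcal{B}}$.

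First I would record the formal compatibilities. Because $\mathcal{A}$ is a $\mathbb{Q}$-algebra the averaging idempotent $e_L := \mathrm{vol}(L)^{-1} \mathbf{1}_L \in \mathcal{H}(G, \mathcal{A})$ is defined and commutes with arbitrary extension of scalars, so $V_{\mathcal{B}}^L = e_L V_{\mathcal{B}} = \mathcal{B} \otimes_{\mathcal{A}} V^L$ and $\mathcal{H}(G, L, \mathcal{B}) = \mathcal{B} \otimes_{\mathcal{A}} \mathcal{H}(G, L, \mathcal{A})$. Moreover, since $V$ is irreducible with $V^L \neq 0$, the submodule generated by $V^L$ is all of $V$, giving $V = \mathcal{H}(G, \mathcal{A}) \cdot V^L$, whence $V_{\mathcal{B}} = \mathcal{H}(G, \mathcal{B}) \cdot V_{\mathcal{B}}^L$ after base change.

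Next I would view $V^L$ as a finite-dimensional faithful irreducible module over its image $\bar H$ in $\End_{\mathcal{A}}(V^L)$. By Jacobson density and Wedderburn, $\bar H \cong \Mat_n(D)$ where $D = \End_{\mathcal{H}(G, L, \mathcal{A})}(V^L)^{\mathrm{op}}$ is a finite-dimensional division algebra over $\mathcal{A}$. Let $Z$ be its center, a finite field extension of $\mathcal{A}$. Because $\mathrm{char}\, \mathcal{A} = 0$, the extension $Z/\mathcal{A}$ is separable, so $Z \otimes_{\mathcal{A}} \mathcal{B}$ is a finite product of fields, and consequently $D \otimes_{\mathcal{A}} \mathcal{B} = D \otimes_Z (Z \otimes_{\mathcal{A}} \mathcal{B})$ is a finite product of central simple algebras, hence semisimple. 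Therefore $V_{\mathcal{B}}^L$ is a finite-length semisimple module over $\bar H \otimes_{\mathcal{A}} \mathcal{B} \cong \Mat_n(D \otimes_{\mathcal{A}} \mathcal{B})$, hence also over $\mathcal{H}(G, L, \mathcal{B})$; write $V_{\mathcal{B}}^L = M_1 \oplus \cdots \oplus M_t$ with each $M_i$ irreducible and finite-dimensional over $\mathcal{B}$.

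Now define $V_i := \mathcal{H}(G, \mathcal{B}) \cdot M_i \subset V_{\mathcal{B}}$ for $1 \le i \le t$. Taking $L$-invariants gives $V_i^L = e_L \mathcal{H}(G, \mathcal{B}) e_L \cdot M_i = \mathcal{H}(G, L, \mathcal{B}) \cdot M_i = M_i$, which is nonzero, $\mathcal{B}$-admissible and irreducible as an $\mathcal{H}(G, L, \mathcal{B})$-module; since $V_i$ is generated by $V_i^L$, Theorem \ref{eir-9} yields that $V_i$ is itself an irreducible $(\mathcal{B}, G)$-module, giving items (i) and (ii). From $V_{\mathcal{B}} = \mathcal{H}(G, \mathcal{B}) \cdot V_{\mathcal{B}}^L$ one obtains $V_{\mathcal{B}} = \sum_{i=1}^t V_i$; and if $V_i \cap \sum_{j \neq i} V_j$ were nonzero then irreducibility of $V_i$ would force it to equal $V_i$, implying $M_i = V_i^L \subset \sum_{j \neq i} V_j^L = \bigoplus_{j \neq i} M_j$, contradicting the direct sum decomposition of $V_{\mathcal{B}}^L$. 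Hence $V_{\mathcal{B}} = V_1 \oplus \cdots \oplus V_t$, proving (iii).

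The main obstacle is the semisimplicity step: establishing that $V_{\mathcal{B}}^L$ is a finite-length semisimple $\mathcal{H}(G, L, \mathcal{B})$-module. This reduces to semisimplicity of $D \otimes_{\mathcal{A}} \mathcal{B}$, which is precisely where the characteristic-zero hypothesis (encoded in $\mathcal{A}$ being a $\mathbb{Q}$-algebra) is used via separability of the finite extension $Z/\mathcal{A}$. All remaining steps are formal consequences of Theorem \ref{eir-9} together with the commutation of the idempotent $e_L$ with scalar extension.
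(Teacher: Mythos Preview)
Your overall strategy---decompose $V_{\mathcal B}^L$ via Wedderburn theory and then take $V_i := \mathcal H(G,\mathcal B)\cdot M_i$ as \emph{submodules} of $V_{\mathcal B}$---is more direct than the paper's, but there is a genuine gap at the step where you assert that $V_i$ is irreducible. You write that ``since $V_i$ is generated by $V_i^L$, Theorem~\ref{eir-9} yields that $V_i$ is itself an irreducible $(\mathcal B,G)$-module.'' Theorem~\ref{eir-9} says no such thing. On the contrary, part (v) of that theorem exhibits $\calW(I,L)=\mathcal H(G,\mathcal B)\epsilon_L/\mathcal H(G,\mathcal B)I$, a cyclic module generated by its $L$-invariants, with $\calW(I,L)^L\simeq \mathcal H(G,L,\mathcal B)/I$ irreducible, and yet $\calW(I,L)$ is only asserted to have a unique irreducible \emph{quotient}; its unique maximal proper submodule (with zero $L$-invariants) need not vanish. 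So ``generated by irreducible $L$-invariants'' does not by itself force irreducibility.

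The gap is repairable, and the missing ingredient is the one structural fact about $V_{\mathcal B}$ you have not used: as an $(\mathcal A,G)$-module, $V_{\mathcal B}=\mathcal B\otimes_{\mathcal A}V\simeq\bigoplus_\alpha V$ (one copy for each element of an $\mathcal A$-basis of $\mathcal B$) is semisimple with every simple constituent isomorphic to $V$. Hence any nonzero $(\mathcal A,G)$-submodule---in particular any nonzero $(\mathcal B,G)$-submodule $U\subset V_i$---contains a copy of $V$ and thus has $U^L\neq 0$; combined with $V_i^L=M_i$ irreducible this forces $U=V_i$. With this inserted, your argument is complete. By contrast, the paper builds the $V_i$ as abstract irreducible quotients $\mathcal H(G,\mathcal B)\epsilon_L/J_i$ via Theorem~\ref{eir-9}, proves the ideal identity $\mathcal B\otimes_{\mathcal A}J=J_1\cap\cdots\cap J_t$ (Lemma~\ref{aaeir-7}), and deduces an embedding $V_{\mathcal B}\hookrightarrow\bigoplus_i V_i$ shown to be an isomorphism by a dimension count on $L$-invariants. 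Your route, once patched, is shorter; the paper's is more explicit about the ideal structure, which it reuses later.
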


\vskip .2in
We warn the reader that we do not assume that $V$ is $\mathcal A$--admissible but that $V^L\neq 0$ is $\mathcal A$--admissible.  
On the level of $L$--invariants, the decomposition in (iii) is contained in Lemma \ref{aaeir-2} and it is based on some very simple  facts from the theory of
semi--simple rings  (\cite{lang}, Chapter XVII). A more complicated case of positive characteristic  require more elaborated tools (\cite{hv-1}, Theorem I.1).

\vskip .2in
We warn the reader that because of Theorem \ref{int--agm-4}, the assumption that $\mathcal A$ is a field is expected. Theorem \ref{int-aaeir-1} has many applications. They are contained in
Section \ref{ebf}. We recall just the following one (see Corollary \ref{ebf-0}):

\begin{Cor}\label{int--ebf-0} Assume that $\mathcal A$ is any subfield of $\mathbb C$.
   Let $L\subset G$ be an open compact subgroup.
  Let $V$ be an irreducible $(\mathcal A, \ G)$--module such that $V^L\neq 0$ and  $\mathcal A$--finite dimensional. Then, $V$ is $\mathcal A$--admissible (see Definition \ref{agm-2}).
\end{Cor}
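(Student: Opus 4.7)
\smallskip

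The plan is to reduce the problem to the known case of complex coefficients by flat base change, using Theorem \ref{int-aaeir-1} as the bridge. Specifically, I would apply Theorem \ref{int-aaeir-1} to the extension $\mathcal{A}\subset \mathbb{C}$, which is legitimate because $\mathcal{A}$ is a subfield of $\mathbb{C}$ (and thus a $\mathbb{Q}$-algebra), $V$ is irreducible, and $V^{L}$ is a nonzero finite-dimensional $\mathcal{A}$-vector space, hence in particular an $\mathcal{A}$-admissible irreducible $\mathcal{H}(G, L, \mathcal{A})$-module. This yields a decomposition
$$
V_{\mathbb{C}}\;=\;\mathbb{C}\otimes_{\mathcal{A}} V \;\simeq\; V_{1}\oplus\cdots\oplus V_{t}
$$
into irreducible $(\mathbb{C}, G)$-modules $V_{i}$ with each $V_{i}^{L}\neq 0$ and finite-dimensional over $\mathbb{C}$.

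Next, I would invoke the classical theorem (Jacquet, Bernstein) that every irreducible smooth complex representation of a reductive $p$-adic group is automatically admissible. Applied to each $V_{i}$, this gives that each $V_{i}$, and consequently $V_{\mathbb{C}}$ itself, is $\mathbb{C}$-admissible in the sense of Definition \ref{agm-2}. In particular, for every open compact subgroup $L'\subset G$ the space $(V_{\mathbb{C}})^{L'}$ is finite-dimensional over $\mathbb{C}$.

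To transfer this back to $\mathcal{A}$, I would use that taking $L'$-invariants commutes with base change in characteristic zero. Because $\mathcal{A}$ is a $\mathbb{Q}$-algebra, $V^{L'}$ is the image of the Hecke idempotent $e_{L'}=\mathrm{vol}(L')^{-1}\mathbf{1}_{L'}\in \mathcal{H}(G, L', \mathcal{A})$ acting on $V$; tensoring with $\mathbb{C}$ preserves this image, so one obtains the identification
$$
(V_{\mathbb{C}})^{L'}\;=\;\mathbb{C}\otimes_{\mathcal{A}} V^{L'}.
$$
The left-hand side being finite-dimensional over $\mathbb{C}$ forces $V^{L'}$ to be finite-dimensional over $\mathcal{A}$ (since $\dim_{\mathcal{A}} V^{L'}=\dim_{\mathbb{C}}(\mathbb{C}\otimes_{\mathcal{A}} V^{L'})$). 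As $L'$ was an arbitrary open compact subgroup, this is precisely $\mathcal{A}$-admissibility of $V$.

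The main obstacle is essentially absorbed into the inputs: once Theorem \ref{int-aaeir-1} is available (giving the decomposition of $V_{\mathbb{C}}$ into irreducible pieces each of which inherits the $L$-invariance and finite-dimensionality hypotheses), the deep point is that one may then import Jacquet's classical complex admissibility theorem for reductive $p$-adic groups. The only routine check is the base-change identity for the invariants functor, and this is built into the $\mathbb{Q}$-algebra hypothesis via the averaging idempotent $e_{L'}$, so no further difficulty arises. Note that the argument genuinely uses that $G$ is a reductive $p$-adic group, not merely an $\ell$-group, because the classical admissibility result is needed at this step.
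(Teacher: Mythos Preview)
Your proof is correct and follows essentially the same approach as the paper: apply Theorem \ref{int-aaeir-1} with $\mathcal B=\mathbb C$, invoke Jacquet's admissibility theorem for the irreducible complex summands, and then descend using the base-change identity $(V_{\mathbb C})^{L'}\simeq \mathbb C\otimes_{\mathcal A} V^{L'}$ (which the paper records as Lemma \ref{agm-7}(i)). The only cosmetic difference is that you justify this last identity via the averaging idempotent, whereas the paper quotes its earlier lemma.
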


\vskip .2in
This is proved reducing to the well--known result in the complex case via Theorem \ref{int-aaeir-1}. We remind the reader that
by a result of Jacquet (\cite{renard}, Theorem VI.2.2), every irreducible $(\mathbb C, \ G)$--irreducible module is $\mathbb C$--admissible.
But in the generality that we consider we are not sure that every irreducible $(\mathcal A, \ G)$--module is  $\mathcal A$--admissible without assumptions  stated in Corollary \ref{int--ebf-0}.
In the present form  Corollary \ref{int--ebf-0} is quite useful since it fundamentally contributes to the construction of unramified irreducible representations (see Theorem \ref{cuir-4} in
Section \ref{cuir}):

\begin{Thm}\label{int--cuir-4} Let $k$ be a non--Archimedean local field. Let $\mathcal O\subset k$ be its ring of integers, and
  let $\varpi$ be  a generator of the maximal ideal in $\calO$. Let $q$ be the number of elements in the residue field $\mathcal O/ \varpi\mathcal O$.
  Assume that is $G$  is a  $k$--split Zariski connected reductive group. Let $A$ be its maximal $k$--split torus, and $W$ the corresponding group.
  We write $\hat{A}$ for the complex torus dual to $A$. Let $W$ be the Weyl group of $A$ in $G$. The orbit space
  $$
 X\overset{def}{=}\hat{A}/W
 $$
is an affine variety defined over $\mathbb Q$.  Let $K=G(\mathcal O)$ be its  hyperspecial
maximal compact subgroup of $G$.  We normalize a Haar measure on $G$  such that $\int_{K}dg=1$  (see Section \ref{eir}). Let $\overline{\mathbb Q}$ be the algebraic closure of $\mathbb Q$ inside $\mathbb C$.
Let $\mathcal A$ be any  subfield of $\overline{\mathbb Q}$ if $G$ is simply--connected, or a extension  of $\mathbb Q(q^{1/2})$ in $\overline{\mathbb Q}$ otherwise.  We define the (commutative) Hecke algebra
$\mathcal H\left(G, \  K, \ \mathcal A\right)$ with respect to above fixed Haar measures. Then, we have the following:

\begin{itemize}
\item[(i)] (Satake isomorphims over subfields of $\overline{\mathbb Q}$)
  Maximal ideals in $\mathcal H\left(G, \  K, \ \mathcal A\right)$ are parame\--trized by points in $X(\overline{\mathbb Q})$ such that  points in $X(\overline{\mathbb Q})$
  give the same maximal ideal  if and only if they are  $Gal(\overline{\mathbb Q}/\mathcal A)$--conjugate: for $x\in  X(\overline{\mathbb Q})$, we denote by
  $\mathfrak m_{x, \mathcal A}$ the corresponding maximal ideal.   The corresponding
  quotient $\mathcal H\left(G, \  K, \ \mathcal A\right)/\mathfrak m_{x, \mathcal A}$ is  denoted by $F(x, \mathcal A)$. It is a finite (field)  extension of $\mathcal A$, and it also naturally
  irreducible $\mathcal A$--admissible $\mathcal H\left(G, \  K, \ \mathcal A\right)$--module. The map $Gal(\overline{\mathbb Q}/\mathcal A).x\longmapsto  F(x, \mathcal A)$  is a bijection 
  between $Gal(\overline{\mathbb Q}/\mathcal A)$--orbits in  $X(\overline{\mathbb Q})$, and the set of equivalence classes of irreducible $\mathcal A$--admissible irreducible
  $\mathcal H\left(G, \  K, \ \mathcal A\right)$--modules.

\item[(ii)] For each $x\in X(\overline{\mathbb Q})$, the $(\mathcal A, \  G)$--module (see Theorem \ref{eir-9} for the notation)
  $$
  \mathcal V(x, \mathcal A)\overset{def}{=}\mathcal V(\mathfrak m_x, K)
  $$
  is an irreducible and {\bf $\mathcal A$--admissible} $(\mathcal A, \ G)$--module. We have
  $$
  \mathcal V^K(x, \mathcal A)\simeq \mathcal B_{x, \mathcal A}
  $$ as $\mathcal H\left(G, \  K, \ \mathcal A\right)$--modules, 
  and
  $$
  \End_{(\mathcal A, \ G)} \left( \mathcal V(x, \mathcal A)\right)\simeq F(x, \mathcal A).
$$

\item[(iii)] $\mathcal V(x, \mathcal A)$ is absolutely irreducible (see Corollary \ref{eib-99} for the standard definition of absolute irreducibility) if and only if $x\in X(\mathcal A)$.

\item[(iv)] Let $x\in X(\overline{\mathbb Q})$. Then, for any Galois extension  $\mathcal A\subset \mathcal B$ which contains $F(x, \mathcal A)$,
  $\mathcal V(x, \mathcal B)$ is absolutely irreducible. Moreover, there exist $t=\dim_{\mathcal A} \ F(x, \mathcal A)$ 
  mutually different elements (among them $x$) in $Gal(\overline{\mathbb Q}/\mathcal B).x$, say $x=y_1, y_2, \ldots, y_t$, 
  such that  we have the following:
  $$
  \left( \mathcal V(x, \mathcal A)\right)_{\mathcal B}= \mathcal B\otimes_{\mathcal A} \  \mathcal V(x, \mathcal A)\simeq \mathcal V(x, \mathcal B)
\oplus   \mathcal V(y_2, \mathcal B)
\oplus \cdots \oplus  \mathcal V(y_t, \mathcal B).
$$
Furthermore,   $V(x, \mathcal B),  \mathcal V(y_2, \mathcal B),  \ldots,  \mathcal V(y_t, \mathcal B)$ are mutually non--isomorphic $(\mathcal B, \ G)$--modules.

\item[(v)] (Classification of unramified admissible representations  subfields of $\overline{\mathbb Q}$) The map
  $$
  Gal(\overline{\mathbb Q}/\mathcal A).x\longmapsto  \mathcal V(x, \mathcal A)
  $$
  is a bijection between $Gal(\overline{\mathbb Q}/\mathcal A)$--orbits in  $X(\overline{\mathbb Q})$, and the set of equivalence classes of unramified $\mathcal A$--admissible
  irreducible  $(\mathcal A, \ G)$--modules. 
\end{itemize}
\end{Thm}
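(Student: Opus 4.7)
The plan is to use Gross's refinement \cite{gross} of the Satake isomorphism to translate the classification of unramified modules into commutative algebra, and then import the structural results established in Sections \ref{eir}--\ref{ebf}. Gross shows that the Satake transform identifies $\mathcal H(G,K,\mathcal A)$ with $\mathcal A[X]$ whenever $\mathcal A$ is a $\mathbb Q$--algebra (respectively a $\mathbb Q(q^{1/2})$--algebra if $G$ is not simply connected), because the coefficients of the transform on the standard basis lie in $\mathbb Z$ (respectively $\mathbb Z[q^{\pm 1/2}]$), while $X=\hat A/W$ is already defined over $\mathbb Q$. Granting this, part (i) is pure commutative algebra: for a field $\mathcal A\subset\overline{\mathbb Q}$, the Hilbert Nullstellensatz plus Galois descent identifies $\mathrm{Max}\,\mathcal A[X]$ with the set of $\mathrm{Gal}(\overline{\mathbb Q}/\mathcal A)$--orbits in $X(\overline{\mathbb Q})$; to each $x\in X(\overline{\mathbb Q})$ one attaches the evaluation kernel $\mathfrak m_{x,\mathcal A}$, and the residue field $F(x,\mathcal A)=\mathcal A[X]/\mathfrak m_{x,\mathcal A}$ is the finite extension of $\mathcal A$ generated by the coordinates of $x$.

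For parts (ii) and (iii) I would apply Theorem \ref{eir-9} with $L=K$ and the ideal $\mathfrak m_{x,\mathcal A}$ to obtain an irreducible $(\mathcal A,G)$--module $\mathcal V(x,\mathcal A)$ whose $K$--invariants are $F(x,\mathcal A)$ as an $\mathcal H(G,K,\mathcal A)$--module. Since $F(x,\mathcal A)$ is $\mathcal A$--finite--dimensional and $\mathcal A\subset\overline{\mathbb Q}\subset\mathbb C$ is a field, Corollary \ref{int--ebf-0} upgrades $\mathcal V(x,\mathcal A)$ to an $\mathcal A$--admissible module, after which Theorem \ref{aeir-1} gives $\End_{(\mathcal A,G)}\mathcal V(x,\mathcal A)\simeq F(x,\mathcal A)$. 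Absolute irreducibility of an irreducible admissible module over a field is equivalent to the endomorphism algebra reducing to the base field, so (iii) reads $F(x,\mathcal A)=\mathcal A$, which holds exactly when $x\in X(\mathcal A)$.

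Part (iv) is where the base--change machinery of Section \ref{aaeir} does the work. Applying Theorem \ref{int-aaeir-1} to $V=\mathcal V(x,\mathcal A)$ with $L=K$ I obtain irreducible $(\mathcal B,G)$--modules $V_1,\ldots,V_t$ with
$$
\mathcal V(x,\mathcal A)_{\mathcal B}\;\simeq\;V_1\oplus\cdots\oplus V_t
$$
and each $V_i^K$ a $\mathcal B$--finite--dimensional irreducible $\mathcal H(G,K,\mathcal B)$--module. Taking $K$--invariants on the left and using Lemma \ref{int-agm-3} together with the Satake identification $\mathcal H(G,K,\mathcal B)\simeq\mathcal B[X]$, the $K$--invariants become $\mathcal B\otimes_{\mathcal A}F(x,\mathcal A)$, which splits as a product of fields indexed by $\mathrm{Hom}_{\mathcal A}(F(x,\mathcal A),\mathcal B)$, equivalently by the $\mathrm{Gal}(\overline{\mathbb Q}/\mathcal B)$--orbits $y_1=x,y_2,\ldots,y_t$ inside $\mathrm{Gal}(\overline{\mathbb Q}/\mathcal A).x$. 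Matching factors and using the uniqueness clause of Theorem \ref{eir-9}, each $V_i$ is forced to be $\mathcal V(y_i,\mathcal B)$; distinctness of the maximal ideals $\mathfrak m_{y_i,\mathcal B}$ prevents any two summands from being isomorphic, and absolute irreducibility of each summand follows from (iii) because $y_i\in X(\mathcal B)$.

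Finally, part (v) combines (i) with the unramified Hecke--algebra correspondence: an unramified $\mathcal A$--admissible irreducible $V$ has $V^K\neq 0$ finite--dimensional and irreducible over $\mathcal H(G,K,\mathcal A)$, hence annihilated by a unique $\mathfrak m_{x,\mathcal A}$, and the uniqueness in Theorem \ref{eir-9} forces $V\simeq\mathcal V(x,\mathcal A)$. The main obstacle I anticipate is the bookkeeping in (iv): one must verify that the abstract summands handed over by Theorem \ref{int-aaeir-1} are exactly the modules $\mathcal V(y_i,\mathcal B)$ attached to the Galois conjugates of $x$, and not some unrelated irreducibles with the same $K$--invariants. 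Lemma \ref{int-agm-3}, which allows $K$--invariants to commute with both extension of scalars and quotient by ideals, is the technical tool that closes this gap cleanly.
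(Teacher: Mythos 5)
Your proof follows essentially the same route as the paper: the Gross integral Satake isomorphism identifies $\mathcal H(G,K,\mathcal A)$ with $\mathcal A[X]$, the appendix result on maximal ideals of affine $\mathcal A$--algebras (Nullstellensatz plus Galois descent) gives (i), Theorem \ref{eir-9} plus Corollary \ref{ebf-0} plus Theorem \ref{aeir-1} give (ii), Corollary \ref{eib-99} gives (iii), Theorem \ref{aaeir-1} together with the splitting of $\mathcal B\otimes_{\mathcal A}F(x,\mathcal A)$ into copies of $\mathcal B$ gives (iv), and Lemma \ref{eir-6}(ii) gives (v). The one small slip is your closing sentence, which credits Lemma \ref{int-agm-3} (ideals) with the compatibility of $K$--invariants and extension of scalars; that is actually Lemma \ref{agm-7}(i), a companion result proved by the same averaging argument, so it does not affect the substance of the argument.
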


\vskip .2in
Besides above mentioned result, the key point is the description of Satake isomorphism \cite{cartier} over $\mathbb Z$ due to Gross \cite{gross} and a technical lemma
about affine varieties proved in the Appendix (see Lemma {acuir-4} in Section \ref{acuir}).

\vskip .2in
The first ideas about the content  of the paper were realized  while the author visited the Hong Kong University
of Science and Technology in January  of 2018. The author would like to thank A. Moy and the
Hong Kong University of Science and Technology for their hospitality. I would like to thank Marko Tadi\' c for showing me the
reference \cite{robert}.  The discussions with Marie--France Vign\' eras and William Casselman were useful in the process of revision of the manuscript.
Marie--France Vign\' eras kindly provided  me with the references \cite{hv-1}, \cite{hv-2} and \cite{vig}.

\section{Basic properties of $\left(\mathcal A, \ G\right)$--modules} 

In this section we assume that  $G$ is a $l$--group (\cite{BZ}, 1.1): this means that $G$ is Hausdorff and  there is a
fundamental system of neighborhoods of the unit element consisting of open compact subgroups.  We always assume that $\mathcal A$ is a $\mathbb Q$--algebra.
In this section we prove basic properties of $\left(\mathcal A, \ G\right)$--modules.

\vskip .2in 

We start with the following result:

\begin{Lem}\label{agm-31}  The functor $V\longmapsto V^L$
  from the category  $\mathcal C\left(\mathcal A, \ G\right)$ into category of $\mathcal A$--modules is exact.
\end{Lem}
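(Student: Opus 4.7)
The functor $V \mapsto V^L$ is representable (as $\Hom_{(\calA,G)}(\calA[G/L], -)$ for a suitable object, or just verifiable directly), so \emph{left} exactness is automatic: given an exact sequence $0 \to V' \to V \to V''$ in $\calC(\calA, G)$, the kernel of $V^L \to (V'')^L$ consists of $L$-fixed vectors that map to zero, which is exactly $(V')^L$. So the only real content is right exactness.

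For right exactness, suppose $\pi \colon V \twoheadrightarrow W$ is a surjection of $(\calA,G)$-modules, and fix $w \in W^L$. Choose any preimage $v_0 \in V$ with $\pi(v_0) = w$. By Definition \ref{agm-1} the stabilizer of $v_0$ is open, so it contains an open compact subgroup $L_0 \subset G$. Replacing $L_0$ by $L_0 \cap L$ (still open, and compact since closed in the compact $L$), we may assume $L_0 \subset L$ and $v_0 \in V^{L_0}$. Because $L$ is compact and $L_0$ is open in $L$, the index $n \overset{def}{=} [L : L_0]$ is finite. Here is where the hypothesis that $\calA$ is a $\mathbb{Q}$-algebra enters: $n$ is a positive integer and hence invertible in $\calA$, so the averaging operator
\[
v \;\overset{def}{=}\; \tfrac{1}{n} \sum_{g L_0 \in L/L_0} g \cdot v_0
\]
is a well-defined element of $V$ (independence of coset representatives uses $v_0 \in V^{L_0}$).

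The plan is then to verify two things about $v$. First, $v \in V^L$: for $h \in L$, left multiplication by $h$ permutes the cosets $L/L_0$, so $h \cdot v = v$. Second, $\pi(v) = w$: since $w$ is $L$-fixed, every term $g \cdot w$ in the image sum equals $w$, giving $\pi(v) = \frac{1}{n} \cdot n \cdot w = w$. This exhibits $w$ as the image of an element of $V^L$, which proves surjectivity of $V^L \to W^L$ and completes the proof.

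The only subtle point is the use of $\frac{1}{n} \in \calA$; everything else is formal. This same averaging idempotent will presumably be the engine behind Lemma \ref{agm-3} on $(\fraka V)^L = \fraka V^L$ as well.
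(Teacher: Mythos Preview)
Your proof is correct and follows essentially the same route as the paper's. Both arguments hinge on the averaging operator $\frac{1}{[L:L_0]}\sum_{\gamma\in L/L_0}\gamma$ (using that $[L:L_0]$ is invertible in the $\mathbb Q$--algebra $\mathcal A$); the only cosmetic difference is that the paper checks exactness at the middle of a three-term sequence directly, whereas you separate out left exactness as formal and then verify preservation of surjections.
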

\begin{proof} It is enough to show that if $V_1\longrightarrow V_2 \longrightarrow V_3$ is an exact sequence in  $\mathcal C_{adm}\left(\mathcal A, \ G\right)$, then
  $V^L_1\longrightarrow V^L_2 \longrightarrow V^L_3$ is also exact. It is obvious that the image of $V_1^L$ is contained in the kernel of  $V^L_2 \longrightarrow V^L_3$.
  Conversely, let $v$ be an element in the kernel of  $V^L_2 \longrightarrow V^L_3$. Then, there exists $w\in V_1$ which image is $v$ under the map  $V_1\longrightarrow V_2$.
  Let $L'\subset L$ be an open compact subgroup such that $w\in V_1^{L'}$ and $v\in V^{L'}_2$. Let
 $$
  w_0 =\frac{1}{\# \left(L/L'\right)} \left(\sum_{\gamma\in L/L'} \gamma.w\right)
  $$
  Then, $w_0\in V^{L}_1$, and $v$ is image of $w_0$ under the map  $V_1\longrightarrow V_2$ since $v$ is $L$--stable.
  \end{proof}

\vskip .2in

\begin{Lem}\label{agm-30} Assume that $\mathcal A$ is a Noetherian ring. Then, 
  $\mathcal C_{adm}\left(\mathcal A, \ G\right)$ is an Abelian category.
\end{Lem}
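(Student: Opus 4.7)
The plan is to check that $\mathcal C_{adm}(\mathcal A, G)$ is a full subcategory of the already abelian category $\mathcal C(\mathcal A, G)$ which is closed under the formation of kernels, cokernels and finite direct sums. Once this closure is established, the axioms of an abelian category (existence of kernels/cokernels, and the identifications $\mathrm{coim}\to\mathrm{im}$) are inherited from $\mathcal C(\mathcal A, G)$, so no further work is required.

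First I would verify closure under finite direct sums. If $V_1$ and $V_2$ are $\mathcal A$--admissible, then for every open compact subgroup $L\subset G$ we have $(V_1\oplus V_2)^L = V_1^L \oplus V_2^L$, which is a sum of two finitely generated $\mathcal A$--modules, hence finitely generated. Similarly the zero module is trivially admissible, providing the zero object.

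Next I would handle kernels and cokernels simultaneously using Lemma \ref{agm-31}. Let $f\colon V\to W$ be a morphism in $\mathcal C_{adm}(\mathcal A, G)$, with kernel $K$ and cokernel $C$ formed in $\mathcal C(\mathcal A, G)$. By Lemma \ref{agm-31}, the exact sequence
\begin{equation*}
0 \longrightarrow K \longrightarrow V \stackrel{f}{\longrightarrow} W \longrightarrow C \longrightarrow 0
\end{equation*}
stays exact after applying the functor $(-)^L$, so $K^L = \ker(f^L\colon V^L\to W^L)$ is an $\mathcal A$--submodule of $V^L$, and $C^L = W^L/f(V^L)$ is an $\mathcal A$--quotient of $W^L$. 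The hypothesis that $\mathcal A$ is Noetherian is precisely what is needed here: a submodule of a finitely generated module over a Noetherian ring is finitely generated, so $K^L$ is finitely generated; and a quotient of a finitely generated module is always finitely generated, so $C^L$ is finitely generated. Since $L$ was arbitrary, both $K$ and $C$ lie in $\mathcal C_{adm}(\mathcal A, G)$.

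The only genuinely delicate point in the argument is the kernel case, where the Noetherian assumption is used; without it, there is no reason for $K^L\subset V^L$ to be finitely generated even when $V^L$ is. Everything else is formal: the monomorphism-equals-kernel-of-cokernel and epimorphism-equals-cokernel-of-kernel axioms are verified in $\mathcal C(\mathcal A, G)$ and the constructions we used coincide with those in the ambient category, so they transfer to $\mathcal C_{adm}(\mathcal A, G)$ at once.
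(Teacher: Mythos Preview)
Your proof is correct and follows essentially the same route as the paper: use the Noetherian hypothesis to show that submodules (kernels) have finitely generated $L$--invariants, and use the exactness of $V\mapsto V^L$ from Lemma \ref{agm-31} to show that quotients (cokernels) do as well. The paper's version omits the explicit check of closure under finite direct sums and frames the argument in terms of arbitrary submodules and quotients rather than kernels and cokernels, but the substance is identical.
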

\begin{proof} Let $L\subset G$ be an open--compact subgroup. Let $V$ be an object in $\mathcal C_{adm}\left(\mathcal A, \ G\right)$. Then, by definition
  $V^L$ is finitely generated $\mathcal A$--module. If $W\subset V$ is a submodule, then $W^L\subset V^L$. Hence, $W^L$ is finitely generated $\mathcal A$--module
  since $\mathcal A$ is a Noetherian ring. Next, if $U$ is a quotient module of $V$. Then, $U^L$ is a quotient module of $V^L$. Now, we apply Lemma \ref{agm-31}
  to prove that $U^L$ is finitely generated $\mathcal A$--module.  This shows that submodules and quotients belongs to  $\mathcal C_{adm}\left(\mathcal A, \ G\right)$. This implies that
  category  $\mathcal C_{adm}\left(\mathcal A, \ G\right)$ is Abelian. 
  \end{proof}

\vskip .2in 
The following lemma is one of the key technical results:

\begin{Lem}\label{agm-3} Let $\mathfrak a\subset \mathcal A$ be an ideal of $\mathcal A$.
  Then, for any  $(\mathcal A, \ G)$--module $V$,
and  for any open compact subgroup $L\subset G$,  we have the following:
$$
 \left(\mathfrak a V\right)^L=\mathfrak a V^L.
 $$
 \end{Lem}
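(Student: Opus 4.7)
The inclusion $\mathfrak{a} V^L \subset (\mathfrak{a} V)^L$ is immediate since $G$ acts $\mathcal{A}$-linearly: for $a \in \mathfrak{a}$ and $v \in V^L$, the element $av$ lies in $\mathfrak{a} V$ and is fixed by $L$. So the entire content is the reverse inclusion $(\mathfrak{a} V)^L \subset \mathfrak{a} V^L$.

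For that direction, my plan is to use the usual averaging trick (the same device employed in the proof of Lemma~\ref{agm-31}), which is precisely where the $\mathbb{Q}$-algebra hypothesis on $\mathcal{A}$ is used. Take any $w \in (\mathfrak{a} V)^L$ and write it as a finite sum $w = \sum_{i=1}^{n} a_i v_i$ with $a_i \in \mathfrak{a}$ and $v_i \in V$. The vectors $v_i$ need not be $L$-fixed, but by smoothness and the fact that a finite intersection of open compact subgroups is again open compact, we can choose an open compact subgroup $L' \subset L$ that fixes each $v_i$.

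Since $w$ is $L$-invariant and $G$ acts $\mathcal{A}$-linearly, averaging over the finite quotient $L/L'$ gives
$$
w \;=\; \frac{1}{\#(L/L')} \sum_{\gamma \in L/L'} \gamma \cdot w \;=\; \sum_{i=1}^n a_i \left( \frac{1}{\#(L/L')} \sum_{\gamma \in L/L'} \gamma \cdot v_i \right).
$$
The division by $\#(L/L')$ is legitimate because $\mathcal{A}$ is a $\mathbb{Q}$-algebra. Setting $\tilde{v}_i := \frac{1}{\#(L/L')} \sum_{\gamma \in L/L'} \gamma \cdot v_i$, a standard check (the sum is independent of coset representatives since $v_i \in V^{L'}$, and left multiplication by any element of $L$ permutes the cosets) shows $\tilde{v}_i \in V^L$. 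Hence $w = \sum_i a_i \tilde{v}_i \in \mathfrak{a} V^L$, completing the proof.

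There is no real obstacle here; the only subtlety is making sure the averaging is well defined, which reduces to the two observations that $[L:L']$ is finite (open compact subgroups of an $l$-group have finite index in one another) and that it is invertible in $\mathcal{A}$ (the $\mathbb{Q}$-algebra hypothesis). This is the same reason that the functor of $L$-invariants is exact in characteristic zero, and illustrates why the positive characteristic theory requires the more delicate tools alluded to in the introduction.
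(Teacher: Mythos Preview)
Your proof is correct and follows essentially the same averaging argument as the paper's own proof: write an element of $(\mathfrak{a}V)^L$ as $\sum a_i v_i$, pass to a small enough $L'\subset L$ fixing all $v_i$, and average over $L/L'$ using that $\#(L/L')$ is invertible in the $\mathbb{Q}$-algebra $\mathcal{A}$. The only cosmetic difference is that you divide by $\#(L/L')$ explicitly, whereas the paper first shows $\#(L/L')\cdot v\in\mathfrak{a}V^L$ and leaves the division implicit.
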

\begin{proof} Obviously, we have

$$
\mathfrak a V^L\subset  \left(\mathfrak a V\right)^L,
$$
for any open--compact subgroup $L$.

Let $v\in  \left(\mathfrak a V\right)^L$. Then, there exists $v_1, \ldots, v_l\in V$, $a_1, \ldots, a_l\in \mathfrak a$ such that 
$$
v=\sum_{i=1}^l a_iv_i.
$$
We select $L'\subset L$ an open compact subgroup such that $v_1\ldots, v_l\in V^{L'}$. Then
$$
\# \left(L/L'\right) \cdot v =\sum_{i=1}^l a_i \left(\sum_{\gamma\in L/L'} \gamma.v_i\right).
$$
Obviously, we have
$$
\sum_{\gamma\in L/L'} \gamma.v_i \in V^L.
$$
Thus, we have
$$
\# \left(L/L'\right) \cdot v \in \mathfrak a V^L.
$$
\end{proof}

\vskip .2in
Now, we consider the ring of all endomorphims of $\End_{(\mathcal A, \ G)}\left(V\right)$ of  an irreducible  $(\mathcal A, \ G)$--module $V$.
See also Theorem \ref{aeir-1} where we relate to the Hecke algebras.  We remark that when $G$ is countable at infinity, and $\mathcal A=\mathbb C$,
this ring is always isomorphic to $\mathbb C$ (see \cite{BZ}, Proposition 2.11).  In general, the situation is more interesting.

\begin{Lem} \label{agm-400} Let  $V$ be an irreducible  $(\mathcal A, \ G)$--module.
  Then, the annhilator  of $V$, denoted by  $\Ann_{\mathcal A}(V)$,   in $\mathcal A$ is a  prime ideal.
 Moreover, if we let $\mathfrak p= \Ann_{\mathcal A}(V)$, then the module $V$ extends naturally to an irreducible representation of
  $(k(\mathfrak p), \ G)$, where $k(\mathfrak p)$ is the field of fractions of $A/\mathfrak p$.
  The ring of all endomorphisms $\End_{(\mathcal A, \ G)}\left(V\right)$ is a divison algebra naturally isomorphic to
 $\End_{(k(\mathfrak p), \ G)}\left(V\right)$, and therefore central over $k(\mathfrak p)$. 
\end{Lem}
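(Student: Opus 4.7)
My plan is to prove the three assertions in order, each time reducing to the following common observation: for any $a \in \mathcal A$, multiplication by $a$ on $V$ is an $(\mathcal A, G)$-endomorphism of $V$. Indeed, $\mathcal A$-linearity follows from the commutativity of $\mathcal A$, and $G$-equivariance follows from the fact that in Definition \ref{agm-1} the $G$-action is by $\mathcal A$-linear maps. Consequently both $\ker(a\cdot)$ and $aV$ are $(\mathcal A, G)$-submodules of $V$, and by irreducibility each is either $0$ or $V$.

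To see that $\mathfrak p := \Ann_{\mathcal A}(V)$ is prime, suppose $ab \in \mathfrak p$ with $a \notin \mathfrak p$. Then $aV \neq 0$, forcing $aV = V$, and hence $bV = b(aV) = (ab)V = 0$, so $b \in \mathfrak p$. Passing to the quotient domain $\mathcal A/\mathfrak p$, the same dichotomy shows that every nonzero $a \in \mathcal A/\mathfrak p$ acts bijectively on $V$: its image is $V$, and its kernel cannot be $V$, lest $aV = 0$ contradict $a \neq 0$ in $\mathcal A/\mathfrak p$. The universal property of localization then extends the $\mathcal A/\mathfrak p$-action on $V$ uniquely to an action of $k(\mathfrak p)$. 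Irreducibility of $V$ as a $(k(\mathfrak p), G)$-module is automatic since every $(k(\mathfrak p), G)$-submodule is a fortiori an $(\mathcal A, G)$-submodule.

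For the endomorphism ring, Schur's lemma in the same dichotomy form shows that $\End_{(\mathcal A, G)}(V)$ is a division ring. To identify it with $\End_{(k(\mathfrak p), G)}(V)$, observe that any $\phi \in \End_{(\mathcal A, G)}(V)$ automatically satisfies $\phi(a^{-1} v) = a^{-1} \phi(v)$ for $0 \neq a \in \mathcal A/\mathfrak p$: apply $a$ to both sides and use $\mathcal A$-linearity of $\phi$. Conversely, any $(k(\mathfrak p), G)$-endomorphism is a fortiori an $(\mathcal A, G)$-endomorphism. Finally, multiplication by any $a \in \mathcal A$ lies in the center of $\End_{(\mathcal A, G)}(V)$ by the very definition of an $\mathcal A$-module endomorphism, so the natural image of $k(\mathfrak p)$ sits in the center, giving $\End_{(\mathcal A, G)}(V)$ the structure of a central $k(\mathfrak p)$-algebra.

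I do not anticipate a serious obstacle: the entire argument is a streamlined application of Schur's lemma together with the universal property of localization. The only point requiring a little care is the bookkeeping when passing successively from $\mathcal A$ to $\mathcal A/\mathfrak p$ and then to $k(\mathfrak p)$, in particular verifying that the notions of submodule and endomorphism are preserved at each extension of scalars; but since these are all checks on the level of additive maps commuting with a commutative scalar action, they reduce to the single observation isolated in the first paragraph.
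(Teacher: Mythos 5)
Your proof is correct, and it takes a mildly different route than the paper does for the middle assertion. The paper first invokes Schur's lemma to get that $\End_{(\mathcal A,\ G)}(V)$ is a division algebra, observes that $\mathcal A/\mathfrak p$ embeds into its center, notes the center is a field so that $k(\mathfrak p)$ embeds there too, and only then concludes that $V$ carries a $(k(\mathfrak p),\ G)$-module structure with irreducibility inherited. You instead argue directly on the module: every nonzero $\bar a\in\mathcal A/\mathfrak p$ acts bijectively on $V$ because both $\ker(a\cdot)$ and $aV$ are $(\mathcal A,\ G)$-submodules, and the universal property of localization then extends the action to $k(\mathfrak p)$, with Schur's lemma entering only afterwards. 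The two arguments use essentially the same underlying facts, but yours isolates the bijectivity explicitly and avoids routing the extension of scalars through the center of the endomorphism algebra, which makes the mechanism a bit more transparent; the paper's version is slightly more compact once one grants the standard facts about centers of division rings. Both establish the endomorphism-ring identification by the same two-inclusion argument, with your added (and correct) remark that $\phi(a^{-1}v)=a^{-1}\phi(v)$ follows by applying $a$ and using $\mathcal A$-linearity.
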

\begin{proof}  By definition of a prime ideal, we need to show that $ab\in Ann_{\mathcal A}(V)$ implies $a\in Ann_{\mathcal A}(V)$ or $b\in Ann_{\mathcal A}(V)$.
  Indeed, if $b\not\in Ann_{\mathcal A}(V)$, then $bV$ is a non--zero  $(\mathcal A, \ G)$--submodule $V$. Hence, $bV=V$ because $V$ is irreducible.  Hence,
  $$
  aV=a\left(bV\right)=abV=0,
  $$
  since $ab\in Ann_{\mathcal A}(V)$. This implies $a\in Ann_{\mathcal A}(V)$.

  By Schur's lemma, $\End_{(\mathcal A, \ G)}\left(V\right)$ is a divison algebra. Obviously, $\mathcal A/\mathfrak p$ embeddes into the center  of
$\End_{(\mathcal A, \ G)}\left(V\right)$. The center is a field. Thefore, $k(\mathfrak p)$ embeddes into the center. Now, $V$ can be regarded as
as a $(k(\mathfrak p), \ G)$--module. It is obviously irreducible since $V$ was originally irreducible $(\mathcal A, \ G)$--module. Next, it is clear that
$$
\End_{(k(\mathfrak p), \ G)}\left(V\right)\subset \End_{(\mathcal A, \ G)}\left(V\right).
$$
Finally, since
$k(\mathfrak p)$ belongs to the center of $\End_{(\mathcal A, \ G)}\left(V\right)$, we have
$$
\End_{(\mathcal A, \ G)}\left(V\right)\subset \End_{(k(\mathfrak p), \ G)}\left(V\right).
$$
\end{proof}

\vskip .2in
Here is an example for Lemma \ref{agm-400}. Consider the ring  of polynomials $\mathbb Q[T]$ over $\mathbb Q$. Then, we let $\mathcal A$ to be the localization of  $\mathbb Q[T]$
at the prime ideal generated by $T$. Let $\mathcal K$ be the field of fractions of $\mathbb Q[T]$ and of $\mathcal A$. Then, $\mathcal A$ is a $\mathbb Q$--algebra and a local ring with a unique maximal ideal,
say $\mathfrak m$,  the one generated by $T$.  We let $G=\mathcal K^\times$ and equip  it with a discrete topology. In this way, we obtain an
$l$--group.  Let $V=\mathcal K$.  Then,  $V$ is in an obvious way an irreducible  $(\mathcal A, \ G)$--module. Its annhilator is a $\{0\}$ which is a prime ideal in $\mathcal A$. We remark that $V$ is not
$\mathcal A$--admissible since $\mathcal K$ is not finitely generated over $\mathcal A$. We remark also $\mathfrak m V=V$, and 
$\End_{(\mathcal A, \ G)}\left(V\right)=\mathcal K$.  Finally, we remark that $G$ is countable at infinity since it is a countable set.

\vskip .2in
The following theorem gives further description of irreducible $\mathcal A$--admissible modules and an improvement over Lemma \ref{agm-400}:

\begin{Thm}\label{agm-4} Assume that $\mathcal A$ is a $\mathbb Q$--algebra, and $G$ an $l$--group.
  Then, we have the following:
  \begin{itemize}
  \item[(i)] For every irreducible  $\mathcal A$--admissible $(\mathcal A, \ G)$--module $V$,  the annhilator of $V$ is a
    maximal ideal. In fact, if we write $\mathfrak m= \Ann_{\mathcal A}(V)$, then  $V$ is an   irreducible $\mathcal A/\mathfrak m$--admissible $(\mathcal A/\mathfrak m, \ G)$--module.
\item[(ii)] Let $\Irr_{\mathfrak m}$ be  the set of equivalence classes of 
 irreducible $\mathcal A/\mathfrak m$--admissible $(\mathcal A/\mathfrak m, \ G)$--modules. Then,  the disjoint union 
 $$
 \cup_{\mathfrak m} \ \Irr_{\mathfrak m}  \ \ \text{($\mathfrak m$ ranges over maximal ideal of $\mathcal A$)}
 $$
 can be taken to be  the set of equivalence classes of  irreducible  $\mathcal A$--admissible 
 $(\mathcal A, \ G)$--modules.
\item[(iii)]  Assume that $\mathcal A$ is a  finitely generated $\mathbb C$--algebra. Let $\Irr(G)$ be the set of equivalence of
  complex irreducible admissible representations of $G$ (see \cite{BZ}). Let  $Max(\mathcal A)$ be the  set of all maximal ideals in $\mathcal A$. 
Then, the set $\Irr(G)\times Max(\mathcal A)$ parametrizes irreducible $\mathcal A$--admissible  $(\mathcal A, \ G)$--modules. 
\end{itemize}
 \end{Thm}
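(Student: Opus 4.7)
The strategy is to prove (i) first---this is the substantive step---and then derive (ii) and (iii) by bookkeeping, invoking Hilbert's Nullstellensatz for the latter. Set $\mathfrak p=\Ann_{\mathcal A}(V)$. Lemma \ref{agm-400} already shows $\mathfrak p$ is prime and that $V$ extends naturally to a $(k(\mathfrak p), G)$-module; the plan is to upgrade \emph{prime} to \emph{maximal} by combining admissibility with Lemma \ref{agm-3}.

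For (i), I argue by contradiction. Suppose $R:=\mathcal A/\mathfrak p$ is not a field, and choose a nonzero maximal ideal $\mathfrak n\subset R$. Since $V$ is smooth and nonzero, fix an open compact $L\subset G$ with $V^L\neq 0$. For any nonzero $a\in\mathfrak n$, the submodule $aV$ is nonzero (as $V$ is faithful over $R$), hence equals $V$ by irreducibility. Lemma \ref{agm-3} turns this into $aV^L=V^L$, so $\mathfrak n V^L=V^L$. By admissibility $V^L$ is a finitely generated $R$-module, so localizing at $\mathfrak n$ and applying Nakayama's lemma yields $(V^L)_{\mathfrak n}=0$; equivalently, every $v\in V^L$ is killed by some $s\in R\setminus\mathfrak n$. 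Now define
$$
W:=\{\,v\in V \;:\; sv=0\text{ for some }s\in R\setminus\mathfrak n\,\}.
$$
Since $\mathfrak n$ is prime (so $R\setminus\mathfrak n$ is multiplicatively closed) and since the $\mathcal A$-action commutes with the $G$-action, $W$ is an $(\mathcal A, G)$-submodule of $V$; it contains the nonzero space $V^L$, so $W=V$ by irreducibility. But $V$ is a $k(\mathfrak p)$-vector space, and every $s\in R\setminus\mathfrak n$ is a nonzero element of the integral domain $R$ and hence a unit in $k(\mathfrak p)$. So $sv=0$ forces $v=0$, yielding $V=0$, a contradiction. Thus $\mathfrak p$ is maximal; writing $\mathfrak m=\mathfrak p$, the module $V$ descends to an irreducible $(\mathcal A/\mathfrak m, G)$-module, which is $\mathcal A/\mathfrak m$-admissible since $V^L$ is finitely generated over $\mathcal A$ and the action factors through $\mathcal A/\mathfrak m$.

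Statement (ii) is then bookkeeping. Any irreducible admissible $(\mathcal A/\mathfrak m, G)$-module pulls back via $\mathcal A\twoheadrightarrow\mathcal A/\mathfrak m$ to an irreducible admissible $(\mathcal A, G)$-module; applying (i) inside the field $\mathcal A/\mathfrak m$, whose only maximal ideal is $(0)$, identifies its $\mathcal A$-annihilator as exactly $\mathfrak m$. This simultaneously gives the disjointness of the union and, combined with (i) itself, surjectivity onto all isomorphism classes. For (iii), Hilbert's Nullstellensatz yields $\mathcal A/\mathfrak m\simeq\mathbb C$ for every maximal ideal $\mathfrak m$ of a finitely generated $\mathbb C$-algebra $\mathcal A$, so $\Irr_{\mathfrak m}$ is canonically in bijection with $\Irr(G)$, and (ii) becomes the parametrization by $Max(\mathcal A)\times\Irr(G)$.

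The only real obstacle is the maximality step in (i); the essential trick is that Lemma \ref{agm-3} transports the identity $aV=V$ down to the finitely generated $R$-module $V^L$ so that Nakayama becomes available, after which the $k(\mathfrak p)$-vector space structure provided by Lemma \ref{agm-400} prevents the resulting $R\setminus\mathfrak n$--torsion from coexisting with $V\neq 0$.
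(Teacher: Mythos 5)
Your proof is correct, and its engine is the same as the paper's: Lemma \ref{agm-3} transports $\mathfrak{a}V=V$ down to the finitely generated module $V^L$, where Nakayama's lemma takes over, forcing $V^L=0$ for all $L$ and hence $V=0$. The packaging differs somewhat: the paper argues directly over $\mathcal A$, observing that if $\mathfrak{m}V=V$ for \emph{every} maximal ideal $\mathfrak m$ of $\mathcal A$, then Lemma \ref{pav-5} (which is precisely the ``localize at all maximal ideals and apply Nakayama'' statement) gives $V^L=0$; since $V\neq 0$ there must be some $\mathfrak m$ with $\mathfrak m V=0$, i.e., $\mathfrak m\subset\Ann_{\mathcal A}(V)$, and maximality finishes it. You instead pass to $R=\mathcal A/\mathfrak p$, pick a single nonzero maximal ideal $\mathfrak n\subset R$, localize once at $\mathfrak n$, and then use the $k(\mathfrak p)$-vector-space structure from Lemma \ref{agm-400} to turn the resulting $(R\setminus\mathfrak n)$-torsion into a contradiction. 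Both are valid; yours leans on Lemma \ref{agm-400} and a submodule $W$ construction that the paper avoids, while the paper's route is slightly shorter because Lemma \ref{pav-5} absorbs the work into one black box. Your treatment of (ii) and (iii) matches the paper's.
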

\begin{proof} We prove (i). Since $V$ is irreducible, for each maximal ideal $\mathfrak m\subset \mathcal A$, we have $\mathfrak mV=0$ or $\mathfrak mV=V$.
  Assume that $\mathfrak m V=V$ for all $\mathfrak m$. Then, for an open compact subgroup $L\subset G$, applying Lemma \ref{agm-3}, we must have
$$
V^L= \left(\mathfrak m V\right)^L=\mathfrak m V^L,
 $$
for all $\mathfrak m$. Then, because of Lemma \ref{pav-5}, we must have $V^L=0$. Since $L$ is arbitrary, we obtain $V=0$.  This is a contradiction. Thus, there exists at least one maximal ideal 
$\mathfrak m$ such that $\mathfrak m V=0$. Then, $\mathfrak m\subset Ann_{\mathcal A}(V)$. Hence, 
$$
Ann_{\mathcal A}(V)=\mathfrak m.
$$

It is is obvious that (ii) follows from (i) at once. Finally for (iii), we remark that by Nullstellensatz  $A/\mathfrak m=\mathbb C$ for each $\mathfrak m\in Max(\mathcal A)$.
Hence, (iii) is an obvious consequence of (ii). 
\end{proof}

\vskip .2in
Maintaining the notation of the theorem, the identity action of $G$ on $\mathcal A/\mathfrak m$ is an example of irreducible $\mathcal A/\mathfrak m$--admissible
$(\mathcal A/\mathfrak m, \ G)$--module. We call it the trivial representation. Therefore, $\Irr_{\mathfrak m}$ is always non--empty.
When $G$ is a reductive $p$--adic group, we will prove the existence of other more complicated representation. But in the present generality, $G$ could be the trivial group, and
we can not do better. Section \ref{eir} discusses the existence of irreducible
$(\mathcal A, \ G)$--modules  via  Hecke algebra  adapted from the classical complex case \cite{BZ}.

\vskip .2in 
The following general result   follows from (\cite{mats}, Chapter 4, Theorems 4.6., 4.8) and it is needed in the proof of Theorem \ref{agm-4}:

\begin{Lem} \label{pav-5} Let $V$ be a finitely generated unital module over a commutative ring $R$ with identity. 
 Then, if  $\mathfrak m V=V$ for all maximal ideals  $\mathfrak m\subset R$,  then $V=0$.
 \end{Lem}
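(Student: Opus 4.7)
The plan is to argue by contradiction: assume $V \neq 0$ and produce a single maximal ideal $\mathfrak{m}$ for which $\mathfrak{m} V \subsetneq V$, directly contradicting the hypothesis. The route I would take passes through a maximal proper submodule of $V$ and the standard identification of simple modules over a commutative unital ring.

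First I would establish the standard fact that every finitely generated nonzero $R$-module admits a maximal proper submodule. Fix a finite generating set $v_1, \ldots, v_n$ and apply Zorn's lemma to the poset of proper submodules of $V$, which is non-empty since it contains $0$. The only substantive verification is that every chain $\{W_\alpha\}$ of proper submodules has an upper bound, namely $\bigcup_\alpha W_\alpha$: this union is again a submodule, and it is proper because if it equalled $V$ then some $W_\alpha$ in the chain would already contain all of $v_1, \ldots, v_n$, forcing $W_\alpha = V$, contradicting properness. Zorn's lemma then supplies a maximal proper submodule $W \subsetneq V$.

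Next, the quotient $V/W$ is a simple $R$-module, and since $R$ is commutative with unit I can identify it with $R/\mathfrak{m}$ for some maximal ideal $\mathfrak{m}$. Indeed, choosing any nonzero $\bar v \in V/W$ gives an $R$-linear surjection $R \longrightarrow V/W$, $r \mapsto r \bar v$, whose kernel, being a maximal $R$-submodule of $R$, is a maximal ideal $\mathfrak{m}$. In particular $\mathfrak{m} \cdot (V/W) = 0$, equivalently $\mathfrak{m} V \subseteq W$. Combined with $W \subsetneq V$, this gives $\mathfrak{m} V \subsetneq V$, contradicting the standing hypothesis that $\mathfrak{m} V = V$ for every maximal ideal $\mathfrak{m}$. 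Hence $V = 0$.

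The main (and essentially only) obstacle is justifying the existence of a maximal proper submodule in a finitely generated module; everything else is a routine consequence of the structure of simple modules over a commutative ring. If one preferred a localization-based approach, one could instead pick a nonzero $v \in V$, choose a maximal ideal $\mathfrak{m}$ containing the proper ideal $\mathrm{Ann}_R(v)$ so that $v/1 \neq 0$ in $V_{\mathfrak{m}}$, and apply the local form of Nakayama's lemma to the finitely generated $R_{\mathfrak{m}}$-module $V_{\mathfrak{m}}$, using that localization commutes with the formation of $\mathfrak{m} V$; the quotient argument above is however self-contained and avoids localization entirely.
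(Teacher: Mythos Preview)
Your proof is correct, but it takes a genuinely different route from the paper's. The paper localizes at every maximal ideal $\mathfrak m$, applies Nakayama's lemma to conclude $V_{\mathfrak m}=0$, and then runs the standard local-to-global argument: for each $v$ and each $\mathfrak m$ there is an $s_{v,\mathfrak m}\in R\setminus\mathfrak m$ killing $v$, the collection of these generates the unit ideal, and writing $1$ as a combination of them shows $v=0$. You instead produce a \emph{single} bad maximal ideal directly, by using finite generation and Zorn's lemma to find a maximal proper submodule $W$ and identifying the simple quotient $V/W$ with $R/\mathfrak m$; this is more elementary and avoids localization and Nakayama altogether. Amusingly, the alternative you sketch at the end (pick $v\neq 0$, choose $\mathfrak m\supset\mathrm{Ann}_R(v)$, apply Nakayama to $V_{\mathfrak m}$) is essentially the paper's argument.
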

\begin{proof} We include the proof for the sake of completeness. Let $V_{\mathfrak m}$ be the localization of $V$ at $\mathfrak m$. Then,
  by the assumption of the lemma and Nakayama's lemma, $V_{\mathfrak m}=0$.

  Let $v\in V$.   Then, by above observation, there exists
$$
s_{v, \mathfrak m}\in  R-\mathfrak m
$$
such that
$$
s_{v, \mathfrak m}.v=0 \ \ \text{in $V$,}
$$
for all maximal ideals $\mathfrak m$.

The collection of all $s_{v, \mathfrak m}$, where $\mathfrak m$ ranges over all maximal ideals of $R$, generates an ideal, say $I$, that is not contained in any
$\mathfrak m$. But, then
$$
I=R.
$$
Thus, there exists $\mathfrak m_1, \ldots, \mathfrak m_k$, and $r_1, \ldots, r_l\in R$ such that
$$
1_R=\sum_{i=1}^lr_is_{v, \mathfrak m_i}.
$$

Then, we have
$$
v=1_R.v=\sum_{i=1}^lr_is_{v, \mathfrak m}.v=0.
$$
This proves $V=0$.
       \end{proof}

\vskip .2in
Let $\mathcal A\subset \mathcal B$ be an extension of rings.  Then, for  $(\mathcal A, \ G)$--module $V$ we can consider
$(\mathcal B, \ G)$--module defined as follows:
$$
V_{\mathcal B}\overset{def}{=} V_{\mathcal B/\mathcal A}\overset{def}{=} \mathcal B \otimes_{\mathcal A} V.
$$

\begin{Lem} \label{agm-7}  Assume that $\mathcal A$ is a $\mathbb Q$--algebra. Then, 
under the above assumptions, we have the following:
\begin{itemize}
\item[(i)]  For each open compact subgroup $L\subset G$, we have the following:
$$
V^L_{\mathfrak B} =\mathcal B\otimes_{\mathcal A} V^L.
$$
\item[(ii)] The $V_{\mathcal B}$ is $\mathcal B$--admissible whenever $V$ is $\mathcal A$--admissible.

\item[(iii)] The assignment  $V\longmapsto V_{\mathcal B}$  can be regarded as a functor
  $\mathcal C\left(\mathcal A, \ G\right)\longrightarrow \mathcal C\left(\mathcal B, \ G\right)$ and as a functor
 $\mathcal C_{adm}\left(\mathcal A, \ G\right)\longrightarrow \mathcal C_{adm}\left(\mathcal B, \ G\right)$.
\end{itemize}
\end{Lem}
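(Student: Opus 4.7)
The plan is to exploit the $\mathcal A$-linear averaging idempotent attached to each open compact subgroup $L \subset G$. Since $\mathcal A$ is a $\mathbb Q$-algebra, for $v \in V$ lying in $V^{L'}$ for some open compact $L' \subseteq L$, the formula
$$
e_L(v) \;=\; \frac{1}{\#(L/L')}\sum_{\gamma \in L/L'} \gamma . v
$$
defines an $\mathcal A$-linear projection $e_L : V \to V^L$, independent of the choice of $L'$ because passing to a smaller subgroup merely rescales the sum by the index. In particular $V = V^L \oplus \ker(e_L)$ as $\mathcal A$-modules, so tensoring preserves this splitting:
$$
V_{\mathcal B} \;=\; \bigl(\mathcal B\otimes_{\mathcal A} V^L\bigr) \;\oplus\; \bigl(\mathcal B \otimes_{\mathcal A} \ker(e_L)\bigr).
$$
Consequently the canonical map $\mathcal B \otimes_{\mathcal A} V^L \hookrightarrow V_{\mathcal B}$ is injective, and its image is visibly contained in $(V_{\mathcal B})^L$ since the $L$-action on each $1 \otimes v$ with $v\in V^L$ is trivial.

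For the reverse inclusion in (i), pick $w\in V_{\mathcal B}^L$ and write $w = \sum_i b_i \otimes v_i$ with $v_i \in V$. By smoothness of $V$, the intersection $L' := L \cap \bigcap_i \mathrm{Stab}_G(v_i)$ is an open compact subgroup of $L$, and $w \in \mathcal B \otimes_{\mathcal A} V^{L'}$. The $\mathcal A$-linear operator $e_L$ extends to a $\mathcal B$-linear map $\widetilde{e_L} : V_{\mathcal B} \to \mathcal B \otimes_{\mathcal A} V^L$, and on $\mathcal B \otimes_{\mathcal A} V^{L'}$ this agrees with $\tfrac{1}{\#(L/L')}\sum_{\gamma \in L/L'} \gamma$ acting via the $L$-action on $V_{\mathcal B}$. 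Since $w$ is $L$-fixed, this average returns $w$, and hence $w \in \mathcal B \otimes_{\mathcal A} V^L$. This proves (i), and (ii) follows immediately: if $V^L$ is finitely generated over $\mathcal A$, then $V_{\mathcal B}^L = \mathcal B \otimes_{\mathcal A} V^L$ is finitely generated over $\mathcal B$.

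For (iii), two things need verification. First, $V_{\mathcal B}$ really lies in $\mathcal C(\mathcal B, G)$: for $w = \sum_i b_i \otimes v_i$, the open compact subgroup $\bigcap_i \mathrm{Stab}_G(v_i)$ fixes $w$. Second, a morphism $f : V \to W$ in $\mathcal C(\mathcal A, G)$ induces the $\mathcal B$-linear $G$-equivariant map $\mathrm{id}_{\mathcal B} \otimes f$, and functoriality of $V \mapsto V_{\mathcal B}$ is then automatic; preservation of admissibility is precisely (ii). The one delicate point in the whole argument is that $\mathcal B$ is not assumed flat over $\mathcal A$, so the injectivity $\mathcal B \otimes_{\mathcal A} V^L \hookrightarrow V_{\mathcal B}$ cannot be deduced from exactness properties of the tensor product; it is precisely the splitting of $V$ provided by $e_L$ (available because $\mathcal A \supseteq \mathbb Q$) that supplies it. I expect no further obstacle.
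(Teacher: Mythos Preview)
Your proof is correct and follows essentially the same averaging argument the paper has in mind: the paper's proof of (i) is a one-line reference to Lemma~\ref{agm-3}, whose mechanism is exactly the finite average $\frac{1}{\#(L/L')}\sum_{\gamma\in L/L'}\gamma$ that you package as the idempotent $e_L$. Your presentation is in fact more careful than the paper's sketch, since you make explicit why the natural map $\mathcal B\otimes_{\mathcal A}V^L\to V_{\mathcal B}$ is injective without any flatness hypothesis, via the direct-sum splitting $V=V^L\oplus\ker(e_L)$; the paper leaves this implicit.
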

\begin{proof} (i) has the proof similar to the proof of Lemma \ref{agm-3}. (ii) follows from (i). Finally,  the first functor in (iii) is obvious.
  The second one is well--defined because of (ii). 
\end{proof}

\vskip .2in
Let  $V$ be a $(\mathcal A, \ G)$--module.
Let $\mathfrak p\subset \mathcal A$ be a prime ideal, and $\mathcal A_{\mathfrak p}$ be the 
localization of $\mathcal A$ at $\mathfrak p$. Then, by the standard commutative algebra, $V_{\mathcal A_{\mathfrak p}}$ is the localization of $V$ at $\mathfrak p$ considered
as a $\mathcal A$--module. We denote it by $V_{\mathfrak p}$.
\vskip .2in

\begin{Thm} \label{agm-8} Assume that $\mathcal A$ is a $\mathbb Q$--algebra, and $G$ an $l$--group. Let  $V$ be an irreducible  $\mathcal A$--admissible $(\mathcal A, \ G)$--module.
  Then, for a  prime ideal $\mathfrak p\subset \mathcal A$,  we have the following:
    $$
    V_{\mathfrak p}=\begin{cases}\text{is $\mathcal A_{\mathfrak p}$--admissible irreducible
      $(\mathcal A_{\mathfrak p}, \ G)$--module,}  \ \ \text{if $\mathfrak p= Ann_{\mathcal A}(V)$,}\\
    0,  \ \ \text{if $\mathfrak p\neq Ann_{\mathcal A}(V)$.}\\
    \end{cases}
    $$
    Moreover, if $\mathfrak p= Ann_{\mathcal A}(V)$, then
    $$
    Ann_{\mathcal A_{\mathfrak p}}(V_{\mathfrak p})= \mathfrak m_{\mathfrak p},
    $$ 
    where the right--hand side is the  the localization of $\mathfrak p$.  Using the canonical isomorphism of localizations
    $\mathcal A/\mathfrak p\simeq  \mathcal A_{\mathfrak p}/\mathfrak m_{\mathfrak p}$,  $V_{\mathfrak p}$ is isomorphic to
    $V$ as  a $(\mathcal A/ \mathfrak p, \ G)$--module.
\end{Thm}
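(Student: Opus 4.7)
The plan is to reduce everything to Theorem \ref{agm-4}(i), which already asserts that $\mathfrak{m}:=\Ann_{\mathcal A}(V)$ is a \emph{maximal} ideal of $\mathcal A$ and that $V$ is canonically an irreducible $\mathcal A/\mathfrak m$--admissible $(\mathcal A/\mathfrak m, \ G)$--module. Once this is in hand, both cases of the theorem reduce to pure commutative algebra applied to $V$ viewed as a vector space over the field $\mathcal A/\mathfrak m$.

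For the vanishing case $\mathfrak p\neq \mathfrak m$: maximality of $\mathfrak m$ forces $\mathfrak m\not\subseteq \mathfrak p$, so I pick $a\in \mathfrak m\setminus \mathfrak p$. Then $aV=0$ while $a$ is a unit in $\mathcal A_{\mathfrak p}$, so for any pure tensor $(x/s)\otimes v$ in $V_{\mathfrak p}=\mathcal A_{\mathfrak p}\otimes_{\mathcal A} V$ the balanced-product relation gives
$$
(x/s)\otimes v=\bigl((x/(sa))\,a\bigr)\otimes v=(x/(sa))\otimes (av)=0,
$$
hence $V_{\mathfrak p}=0$.

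For the case $\mathfrak p=\mathfrak m$: since $\mathfrak m V=0$, the canonical isomorphism of residue fields $\mathcal A/\mathfrak m\simeq \mathcal A_{\mathfrak p}/\mathfrak m_{\mathfrak p}$ yields a chain of $(\mathcal A/\mathfrak m, \ G)$--module isomorphisms
$$
V_{\mathfrak p}=\mathcal A_{\mathfrak p}\otimes_{\mathcal A} V \simeq (\mathcal A_{\mathfrak p}/\mathfrak m_{\mathfrak p})\otimes_{\mathcal A/\mathfrak m} V \simeq V.
$$
In particular the $\mathcal A_{\mathfrak p}$--action on $V_{\mathfrak p}$ factors through the residue field, so $\mathfrak m_{\mathfrak p}\subseteq \Ann_{\mathcal A_{\mathfrak p}}(V_{\mathfrak p})$, and since $V_{\mathfrak p}\neq 0$ while $\mathfrak m_{\mathfrak p}$ is already maximal, equality holds. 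Irreducibility of $V_{\mathfrak p}$ as an $(\mathcal A_{\mathfrak p}, \ G)$--module coincides with its irreducibility as an $(\mathcal A/\mathfrak m, \ G)$--module, which is given. For any open compact $L\subset G$, the above chain identifies $V_{\mathfrak p}^L$ with $V^L$, which is finite dimensional over $\mathcal A/\mathfrak m$ by Theorem \ref{agm-4}(i), and therefore finitely generated over $\mathcal A_{\mathfrak p}$; this gives $\mathcal A_{\mathfrak p}$--admissibility.

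There is essentially no obstacle here: the whole argument rests on the observation that localizing at a maximal ideal $\mathfrak m$ and reducing modulo $\mathfrak m$ have the same effect on a module already annihilated by $\mathfrak m$. The one point requiring a bit of care is to confirm that the displayed chain of isomorphisms is compatible both with the $G$--action (immediate, since $G$ acts $\mathcal A$--linearly on $V$ and trivially on the scalar factor) and with the $\mathcal A_{\mathfrak p}$--module structure (which is precisely what the displayed isomorphisms encode).
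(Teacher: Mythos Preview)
Your argument is correct and follows essentially the same route as the paper's proof: both invoke Theorem~\ref{agm-4}(i) to know that $\mathfrak m=\Ann_{\mathcal A}(V)$ is maximal, kill $V_{\mathfrak p}$ for $\mathfrak p\neq\mathfrak m$ by picking an element of $\mathfrak m\setminus\mathfrak p$ that becomes a unit, and in the case $\mathfrak p=\mathfrak m$ identify $V_{\mathfrak p}$ with $V$ via the residue-field isomorphism $\mathcal A/\mathfrak m\simeq\mathcal A_{\mathfrak p}/\mathfrak m_{\mathfrak p}$. The only cosmetic difference is that the paper cites Lemma~\ref{agm-7}(ii) for $\mathcal A_{\mathfrak p}$--admissibility whereas you read it off directly from the chain of isomorphisms.
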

\begin{proof} We recall that  $Ann_{\mathcal A}(V)$ is a maximal ideal. Therefore, if  $\mathfrak p\neq Ann_{\mathcal A}(V)$ is a prime ideal,
  then $Ann_{\mathcal A}(V)-\mathfrak p\neq \emptyset$. Select $x\in  Ann_{\mathcal A}(V)-\mathfrak p$. Then $x/1\in \mathcal A_{\mathfrak p}$ is invertible and it acts as zero
  on $V_{\mathfrak p}$ . Thus, $V_{\mathfrak p}$  is zero. 

  Assume $\mathfrak p=Ann_{\mathcal A}(V)$. Then, the maximal ideal $\mathfrak m_{\mathfrak p}$, obtained by the localization of $\mathfrak p$, obviously annhilates
  $V_{\mathfrak p}$. None of the other elements 
in   $\mathcal A_{\mathfrak p}$ can kill $V_{\mathfrak p}$ since by the properties of the localization and 
irreducibility of $V$ would exist an $s\in \mathcal A-\mathfrak p$ which kills $V$ which is not possible. This proves
$Ann_{\mathcal A_{\mathfrak p}}(V)= \mathfrak m_{\mathfrak p}$. 

Next, we may regard $V_{\mathfrak p}$ as $(\mathcal A/ \mathfrak p, \ G)$--module. Hence, 
the argument similar to the one used in the computation of the annhilator above shows that
$V\longrightarrow V_{\mathfrak p}$ , given by $v\longmapsto 1\otimes v$ is injective map of $(\mathcal A/ \mathfrak p, \ G)$--modules.
Since, the usual properties of localization imply
$$
\mathcal A/\mathfrak p\simeq  \mathcal A_{\mathfrak p}/\mathfrak m_{\mathfrak p},
$$
Hence, the map is an isomorphism of  $(\mathcal A/ \mathfrak p, \ G)$--modules.  Hence, $V_{\mathfrak p}$ is  irreducible
$(\mathcal A_{\mathfrak p}, \ G)$--module. It is $\mathcal A_{\mathfrak p}$--admissible by Lemma \ref{agm-7} (ii).
\end{proof}

\section{Existence of irreducible representations}\label{eir}

In this section we assume that  $\mathcal A$ is a $\mathbb Q$--algebra, and $G$ an $l$--group. The goal of this section is to discuss existence of irreducible
$(\mathcal A, \ G)$--modules.  As it may be expected, we use Hecke algebra  adapted from the classical complex case \cite{BZ} but there are some improvement of the
classical complex case. The main result of this section is  Theorem \ref{eir-9}. We remark that the basic idea of the present approach
to the construction of Hecke algebra over $\mathcal A$ was already well--known  (see \cite{NT}, 2.2, where the case of profinite groups).

\vskip .2in

Let $L\subset G$ be an open compact subgroup. Let $\mathcal A$ be a $\mathbb Q$--algebra. We consider
the space $\mathcal H\left(G, L, \mathcal A\right)$ of all functions $f:G \longrightarrow \mathcal A$ which are $L$--biinvariant
and have compact support i.e., they are supported on finitely many double cosets $LxL$, where $x\in G$.  If $1_T$ denotes the characteristic function of a subset $T\subset G$, then
every function $f\in \mathcal H\left(G, L, \mathcal A\right)$ can be written uniquely in the form:
$$
f=\sum_{x\in L\backslash G / L} a_x \cdot 1_{LxL}, \ \ \text{where $a_x\in \mathcal A$, equal to zero for all but finitely many $x$.}
$$
The Hecke algebra  $\mathcal H\left(G, \mathcal A\right)$ with coefficients in $A$ is just the union of all  $\mathcal H\left(G, L, \mathcal A\right)$ when $L$ ranges over all
open compact subgroups of $G$. 

When $\mathcal A=\mathbb C$, we obtain usual Hecke algebras (\cite{cas}, \cite{BZ})  The product is given by the convolution
$$
f\star g(x)= \int_G f(xy^{-1}) g(y) dy.
$$
We recall that $\mathcal H\left(G, L, \mathbb C\right)$ is associative $\mathbb C$--algebra with identity $1_L/vol(L)$. It is a subalgebra of $\mathcal H\left(G, \mathbb C\right)$ for all
$L$.  As it is easy to see and also can be seen by inspecting the construction of Haar measure on $G$ (see the proof of Proposition 1.18 in \cite{BZ}), we see that if we select an open compact
subgroup and require that its volume is equal to one (a rational number!), then all volumes of all open compact subgroups are rational. Moreover, above defined convolution $\star$ makes
$\mathcal H\left(G, \mathbb Q\right)$ into an associative $\mathbb Q$--algebra (in general without identity), and  $\mathcal H\left(G, L, \mathbb Q\right)$ an
associative $\mathbb Q$--algebra with identity $1_L/vol(L)$. Let us explain why $\mathcal H\left(G, \mathbb Q\right)$ is closed under convolution. The reader can easily show that this boils down
to show that $1_{xL}\star 1_{yL}\in \mathcal H\left(G, \mathbb Q\right)$ for all $x, y\in G$, and open compact subgroups $L\subset G$. Indeed, we have the following:

\begin{equation}\label{eir-1}
  \begin{aligned}
  1_{xL}\star 1_{yL}(z)&=\int_G 1_{xL}(zt^{-1}) 1_{yL}(t) dt\\
  & =\int_{yL}  1_{xL}(zt^{-1})dt=vol\left(Lx^{-1}z\cap yL\right)\\
  &= M(x, y, z) \cdot vol \left(L\cap yLy^{-1} \right) \in \mathbb Q,
  \end{aligned}
\end{equation}
where $M(x, y, z)$ is the number of right cosets of the open compact subgroup $L\cap yLy^{-1}$ in which is decomposed  $Lx^{-1}z\cap yL$. We remark that $Lx^{-1}z\cap yL\neq \emptyset$ is equivalent to
$zL=xl_1yL$ for some $l_1\in L$ determined uniquely modulo left coset
$l'_1\left(L\cap yLy^{-1}\right) $. Also, we have the following:
$$
Lx^{-1}z\cap yL=Ll_1y\cap yL=Ly\cap yL= \left(L\cap yLy^{-1} \right) \cdot y
$$
This implies that $M(x, y, z)=1$ whenever $Lx^{-1}z\cap yL\neq \emptyset$.

An explicit computation using defining integral shows
that $1_{xL}\star 1_{yL}$ is right--invariant under $L$. Thus, if we write

\begin{equation}\label{eir-2}
G=\cup_z zL \ \ \text{(disjoint union)},
\end{equation}
then
\begin{equation}\label{eir-3}
 1_{xL}\star 1_{yL}=\sum_z M(x, y, z) \cdot vol \left(L\cap yLy^{-1} \right) \cdot 1_{zL}.
\end{equation}
The sum is of course finite since $Lx^{-1}z\cap yL\neq \emptyset$ implies that $x^{-1}z\in LyL$.  This proves our claim about $\mathcal H\left(G, \mathbb Q\right)$.
We fix such choice of Haar measure and define $\star$ as we explained.

Now, it is obvious that as $\mathbb Q$--vector spaces
\begin{align*}
  &\mathcal H\left(G, L, \mathcal A\right)=\mathcal H\left(G, L, \mathbb Q\right)\otimes_{\mathbb Q}  \mathcal A\\
  & \mathcal H\left(G, \mathcal A\right)=\mathcal H\left(G, \mathbb Q\right)\otimes_{\mathbb Q}  \mathcal A.
  \end{align*}
This enables to define the structure of associative $\mathcal A$--algebra $\mathcal H\left(G, L, \mathcal A\right)$ and $\mathcal H\left(G, \mathcal A\right)$. Furthermore,
\begin{equation}\label{eir-7}
  \epsilon_L=  \epsilon_{L, \calA} = \frac{1_L}{vol(L)}\otimes_{\mathbb Q} 1_{\mathcal A}.
\end{equation}
is the identity of $\mathcal H\left(G, L, \mathcal A\right)$. Furthermore, $\mathcal H\left(G, L, \mathcal A\right)$ is a subalgebra of
$\mathcal H\left(G, \mathcal A\right)$, for all open compact subgroups $L$. 
 We omit  $\otimes 1_{\mathcal A}$ from the notation in this and similar situations in the text that follows.

Let $V$ be a $(\mathcal A, \ G)$--module. Then there exists a unique (subject to the choice of Haar measure above) homomorphism of $\mathcal A$--algebras
$\mathcal H(G, \mathcal A)\longrightarrow \End_{\mathcal A} \left(V\right)$ defined as follows. For $f\in \mathcal H(G, \mathcal A)$, and $v\in V$, we select an open compact
subgroup $L\subset G$
such that $f$ is right invariant by $L$, implying that we can write $f$ as a finite sum $f=\sum_{x} f(x) 1_{xL}$,  and $v\in V^L$. Then, we let
$f.v=vol(L) \cdot \sum_{x}f(x) x.v$. This agrees with the usual
definition $\int_G f(y)y.v dy$ when $\mathcal A=\mathbb C$. Let us show that our definition is correct. Indeed, if   $L'\subset G$ is another open compact subgroup
such that $f$ is right invariant by $L'$, implying that we can write $f$ as a finite sum $f=\sum_{x'} f(x') 1_{x'L'}$,  and $v\in V^{L'}$. We decompose into disjoint unions of right cosets:
$$
L=\cup_{l_1} l_1 L\cap L' \ \  \text{and} \ \ L'=\cup_{l'_1} l'_1 L\cap L'
$$
Then, we have

\begin{align*}
vol(L') \cdot \sum_{x'}f(x') x'.v &= vol(L') \cdot \sum_{x'}\frac{1}{\left[L':L\cap L'\right]}\left( \sum_{l'_1} f(x'l'_1)  \ x'l'_1.v\right)\\
&= vol(L\cap L') \cdot \sum_{x'} \sum_{l'_1} f(x'l'_1)  \ x'l'_1.v\\
&= vol(L\cap L') \cdot \sum_{x} \sum_{l_1} f(xl_1)  \ xl_1.v\\
&= vol(L) \cdot \sum_{x} f(x)  \ x.v.\\
\end{align*}

This shows that the action of elements of $\mathcal H(G, \mathcal A)$ is well--defined. Finally, we check that constructed map
$\mathcal H(G, \mathcal A)\longrightarrow \End_{\mathcal A} \left(V\right)$ is a
homomorphism of $\mathcal A$--algebras.  Indeed, for an arbitrary open compact subgroup $L\subset G$, and $x, y\in G$, we put $f=1_{xL}\otimes 1_{\mathcal A}$ and
$g=1_{yL}\otimes 1_{\mathcal A}$. Then, for
$v\in  V^L$, we remark that
$$
y.v \in L\cap yLy^{-1}.
$$
If we write as a disjoint union

$$
L=\cup_{l_1} \ l_1 \left(L\cap yLy^{-1}\right),
$$
then by definition of the action
$$
fg.v=f.\left(g.v\right)= vol(L) f.\left(y.v\right)=vol(L) vol\left(L\cap yLy^{-1}\right)  \sum_{l_1} xl_1y.v
$$
On the other hand using (\ref{eir-2}) and (\ref{eir-3}), by the definition of the action, we have the following:
$$
f\star g.v=vol(L) \cdot \sum_{\substack{z \ \ \text{as in (\ref{eir-2})}\\ Lx^{-1}z\cap yL\neq \emptyset}}
     vol \left(L\cap yLy^{-1} \right)  z.v= vol(L) vol \left(L\cap yLy^{-1} \right) \sum_{l_1} xl_1y.v
$$

This proves the claim that $\mathcal H(G, \mathcal A)\longrightarrow \End_{\mathcal A} \left(V\right)$ is a
homomorphism of $\mathcal A$--algebras.

As usual (\cite{BZ}, 2.5) $\mathcal H(G, \mathcal A)$--module is non--degenerate if  for any $v\in V$ there exists an open compact subgroup $L\subset G$ such that
(see (\ref{eir-7}))
$$ 
\epsilon_L.v=v.
$$
It easy to check that every  $(\mathcal A, \ G)$--module gives rise to a non--degenerate $\mathcal H(G, \mathcal A)$--module such that 

\begin{equation}\label{eir-4}
x.\left(f.v\right)\overset{def}{=}\left(l_xf\right).v, \ \ f\in \mathcal H(G, \mathcal A), \ \ v\in V,
\end{equation}
where $l_x$ is the left translation $l_xf(y)=f(x^{-1}y)$. Furthermore, it is easy to check the following standard result:

\begin{Lem}\label{eir-5}
  A non--degenerate $\mathcal H(G, \mathcal A)$--module gives rise to a unique $(\mathcal A, \ G)$--module such that (\ref{eir-4}) holds. The category of all
  $(\mathcal A, \ G)$--modules can be identified with the category of all non--degenerate  $\mathcal H(G, \mathcal A)$--modules. 
In particular,  an irreducible $\mathcal H(G, \mathcal A)$--module is also irreducible  $(\mathcal A, \ G)$--module.
\end{Lem}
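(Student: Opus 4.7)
The plan is to construct an inverse to the functor taking a $(\mathcal A, G)$-module to the associated $\mathcal H(G, \mathcal A)$-module (described before the lemma) and deduce uniqueness, functoriality, and preservation of irreducibility from the resulting equivalence. Let $V$ be a non-degenerate $\mathcal H(G, \mathcal A)$-module. For each $v \in V$, denote by $\mathfrak L(v)$ the set of open compact $L \subset G$ with $\epsilon_L \cdot v = v$; non-degeneracy gives $\mathfrak L(v) \ne \emptyset$, and the convolution identity $\epsilon_{L'} \star \epsilon_L = \epsilon_L$ for $L' \subset L$ open compact (a direct calculation) implies $\mathfrak L(v)$ is closed under passing to smaller open compact subgroups, so any two elements share a common smaller element. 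Define the $G$-action by
\[
x \cdot v := (l_x \epsilon_L) \cdot v \quad \text{for any } L \in \mathfrak L(v),
\]
where $l_x f(z) = f(x^{-1}z)$. The identity $l_x(f \star g) = (l_x f) \star g$ combined with the idempotent identity yields independence of $L$: for $L' \subset L$ both in $\mathfrak L(v)$,
\[
(l_x \epsilon_L) \cdot v = (l_x(\epsilon_{L'}\star \epsilon_L))\cdot v = (l_x \epsilon_{L'})\cdot(\epsilon_L\cdot v) = (l_x\epsilon_{L'})\cdot v.
\]

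Next, the group axioms, smoothness, and (\ref{eir-4}) are routine verifications. For $x \in L \in \mathfrak L(v)$ one has $l_x \epsilon_L = \epsilon_L$ (since $xL = L$), yielding both $e \cdot v = v$ and the fact that $L$ stabilizes $v$. Associativity $(xy) \cdot v = x \cdot (y \cdot v)$ reduces to checking $yLy^{-1} \in \mathfrak L(y \cdot v)$ for $L \in \mathfrak L(v)$ (a convolution computation gives $\epsilon_{yLy^{-1}} \star l_y \epsilon_L = l_y \epsilon_L$) together with the coset identity $(l_x \epsilon_{yLy^{-1}}) \star (l_y \epsilon_L) = l_{xy} \epsilon_L$, whence $x \cdot (y \cdot v) = ((l_x \epsilon_{yLy^{-1}}) \star (l_y \epsilon_L)) \cdot v = (l_{xy} \epsilon_L) \cdot v = (xy) \cdot v$. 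Compatibility (\ref{eir-4}) follows: for $f \in \mathcal H(G, \mathcal A)$ and $v \in V$, choose $L$ small enough to lie in $\mathfrak L(v) \cap \mathfrak L(f \cdot v)$ and to make $\epsilon_L \star f = f$ (possible because $f$ is bi-invariant under some open compact subgroup); then $x \cdot (f \cdot v) = (l_x \epsilon_L) \cdot (f \cdot v) = (l_x(\epsilon_L \star f)) \cdot v = (l_x f) \cdot v$. Uniqueness of a $G$-action satisfying (\ref{eir-4}) is immediate: any such action satisfies $x \cdot v = x \cdot (\epsilon_L \cdot v) = (l_x \epsilon_L) \cdot v$.

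The two functors are then mutually inverse. On objects: starting from a $(\mathcal A, G)$-module $V$, the Hecke action defined earlier is non-degenerate (since $\epsilon_L$ acts as identity on $V^L$), and the recovered $G$-action agrees with the original via $(l_x \epsilon_L) \cdot v = x \cdot v$ for $v \in V^L$; the opposite composition is precisely (\ref{eir-4}). On morphisms, an $\mathcal A$-linear map is $\mathcal H$-equivariant iff $G$-equivariant, by (\ref{eir-4}) in one direction and $v = \epsilon_L \cdot v$ in the other. Under this equivalence $\mathcal A$-submodules stable under $G$ coincide with those stable under $\mathcal H$: if $W \subset V$ is $G$-stable and $v \in W$ with $L \in \mathfrak L(v)$, then writing $f = \sum_x f(x) 1_{xL}$ for $f$ right $L$-invariant gives $f \cdot v = vol(L) \sum_x f(x) \, x \cdot v \in W$; conversely, if $W$ is $\mathcal H$-stable then $x \cdot v = (l_x \epsilon_L) \cdot v \in W$. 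Hence an irreducible $\mathcal H(G, \mathcal A)$-module is irreducible as a $(\mathcal A, G)$-module. The main obstacle is the careful bookkeeping of open compact subgroups in each verification; the convolution identities $\epsilon_{L'} \star \epsilon_L = \epsilon_L$ (for $L' \subset L$) and $l_x(f \star g) = (l_x f) \star g$ are the workhorses that make everything cohere.
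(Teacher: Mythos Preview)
The paper gives no proof of this lemma: it simply introduces it with ``it is easy to check the following standard result'' and moves on. Your detailed verification is correct and is precisely the standard argument one finds fleshed out in \cite{BZ} or \cite{cas}: define $x\cdot v=(l_x\epsilon_L)\cdot v$ for any $L$ with $\epsilon_L\cdot v=v$, use the identities $\epsilon_{L'}\star\epsilon_L=\epsilon_L$ (for $L'\subset L$) and $l_x(f\star g)=(l_xf)\star g$ to check well-definedness, the group law, smoothness, and (\ref{eir-4}), and then read off the equivalence of categories and the correspondence of submodules. There is nothing to compare; your write-up is exactly what the author is leaving to the reader.

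One small remark: the final sentence of the lemma, as stated, presumes that an irreducible $\mathcal H(G,\mathcal A)$-module is non-degenerate so that the passage to a $(\mathcal A,G)$-module makes sense. This is automatic except in the degenerate case $\mathcal H(G,\mathcal A)\cdot V=0$; you might add a line noting that for an irreducible $V$ with $\mathcal H(G,\mathcal A)\cdot V\neq 0$ one has $\mathcal H(G,\mathcal A)\cdot V=V$, and every $v=f\cdot w$ satisfies $\epsilon_L\cdot v=v$ once $\epsilon_L\star f=f$, so $V$ is non-degenerate.
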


\vskip .2in
The following lemma is also standard (see \cite{BZ}, Proposition 2.10):

\begin{Lem}\label{eir-6} \begin{itemize}
    \item[(i)] For an irreducible $(\mathcal A, \ G)$--module $V$, and an open compact subgroup $L\subset G$, $\mathcal H\left(G, L, \mathcal A\right)$--module $V^L$ is either $0$ or
      irreducible.
    \item[(ii)] Let $L\subset G$ be an open--compact subgroup. Assume that $V_i$, $i=1,2$, are irreducible   $(\mathcal A, \ G)$--modules such that  $V^L_i\neq 0$, $i=1, 2$. Then, $V_1$ is equivalent to
      $V_2$ as  $(\mathcal A, \ G)$--modules if and only if  $V^L_1$ is equivalent to
      $V^L_2$ as  $\mathcal H\left(G, L, \mathcal A\right)$--modules.    
    \end{itemize}
\end{Lem}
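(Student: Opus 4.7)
The plan rests on two preliminary observations which I would verify first from the explicit convolution formula (\ref{eir-3}): the idempotent $\epsilon_L$ of (\ref{eir-7}) acts on any $(\mathcal A, G)$--module $V$ as the projector onto $V^L$ (so $\epsilon_L V = V^L$ and $\epsilon_L$ fixes $V^L$ pointwise), and the subalgebra $\epsilon_L \cdot \mathcal H(G, \mathcal A) \cdot \epsilon_L$ of $\mathcal H(G, \mathcal A)$ is exactly $\mathcal H(G, L, \mathcal A)$. Lemma \ref{eir-5} additionally identifies $(\mathcal A, G)$--submodules of $V$ with non--degenerate $\mathcal H(G, \mathcal A)$--submodules, which is what I will use to pass between the two viewpoints.

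For (i), assume $V^L \neq 0$ and let $0 \neq W \subset V^L$ be an $\mathcal H(G, L, \mathcal A)$--submodule. Then $\mathcal H(G, \mathcal A) \cdot W$ is a non--zero $(\mathcal A, G)$--submodule of $V$, so equals $V$ by irreducibility. Applying $\epsilon_L$ and using $\epsilon_L W = W$ (since $W \subset V^L$) yields
$$
V^L = \epsilon_L V = \epsilon_L \, \mathcal H(G, \mathcal A) \, \epsilon_L \cdot W = \mathcal H(G, L, \mathcal A) \cdot W = W,
$$
forcing $W = V^L$, as required.

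For (ii), the "only if" direction is immediate from functoriality. For the converse, given an $\mathcal H(G, L, \mathcal A)$--isomorphism $\phi : V_1^L \to V_2^L$, I would form the graph
$$
\Gamma(\phi) = \{(v, \phi(v)) : v \in V_1^L\} \subset V_1^L \oplus V_2^L = (V_1 \oplus V_2)^L,
$$
which is visibly an $\mathcal H(G, L, \mathcal A)$--submodule, and set $W = \mathcal H(G, \mathcal A) \cdot \Gamma(\phi) \subset V_1 \oplus V_2$. The projections $\pi_i : W \to V_i$ are $G$--equivariant. Since $\pi_1(\Gamma(\phi)) = V_1^L$ generates all of $V_1$ under $\mathcal H(G, \mathcal A)$ by the argument from (i), I obtain $\pi_1(W) = V_1$, and similarly $\pi_2(W) = V_2$. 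For injectivity of $\pi_1|_W$, the kernel is a $(\mathcal A, G)$--submodule of $V_2$ and hence is $0$ or $V_2$ by irreducibility. The latter possibility would force $\{0\} \oplus V_2^L \subset W^L$, but the same calculation as in (i) gives $W^L = \epsilon_L W = \mathcal H(G, L, \mathcal A) \cdot \Gamma(\phi) = \Gamma(\phi)$, which meets $\{0\} \oplus V_2^L$ trivially because $\phi$ is a bijection; this would force $V_2^L = 0$, a contradiction. Symmetrically $\pi_2|_W$ is injective, and $\pi_2 \circ \pi_1^{-1} : V_1 \to V_2$ is the desired $(\mathcal A, G)$--isomorphism.

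The main delicate point, and the reason the hypothesis that $\mathcal A$ be a $\mathbb Q$--algebra is essential, lies in the two preliminary observations: the projector identity $\epsilon_L V = V^L$ and the equality $\epsilon_L \mathcal H(G, \mathcal A) \epsilon_L = \mathcal H(G, L, \mathcal A)$ both rely on inverting integers of the form $\#(L/L')$ that appear when averaging over cosets, exactly as in the proof of Lemma \ref{agm-31}. Once these identifications are in hand, the rest of the argument is formal and mirrors the classical complex case in \cite{BZ}, Proposition 2.10.
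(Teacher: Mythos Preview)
Your proof is correct and follows essentially the same approach as the paper. For (i) you make explicit use of the idempotent identity $\epsilon_L \mathcal H(G,\mathcal A)\epsilon_L = \mathcal H(G,L,\mathcal A)$, which the paper records as (\ref{eir-8}) and uses to justify the key equality $V_1^L = W$; your computation $V^L = \epsilon_L\mathcal H(G,\mathcal A)\epsilon_L \cdot W = W$ is exactly the unpacking of that step. For (ii) the paper simply defers to \cite{BZ}, Proposition 2.10(b), whereas you supply the standard graph construction in full; your argument is correct, and the delicate step --- showing $W^L = \Gamma(\phi)$ via the same idempotent trick so that $\ker \pi_1|_W = \{0\}\oplus V_2$ is excluded --- is carried out properly.
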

\begin{proof} We just sketch the proof. Let  $L\subset G$ be an open--compact subgroup.
  
Then, $\epsilon_L$ defined in (\ref{eir-7}) is the identity of the associative algebra $\mathcal H\left(G, L, \mathcal A\right)$. Moreover, we have the following:

\begin{equation}\label{eir-8}
\mathcal H\left(G, L, \mathcal A\right)= \epsilon_L\mathcal H\left(G,\mathcal A\right)\epsilon_L
\end{equation}

Now, we sketch the proof of (i). If $0\subsetneqq W\subsetneqq V^L$ is a $\mathcal H\left(G, L, \mathcal A\right)$--submodule of $V_L$. Then, $V_1\overset{def}{=}\mathcal H\left(G, \mathcal A\right)W$ is a
$(\mathcal A, \ G)$--submodule $V$ such that $V_1^L=W$. Since $V$ is irreducible and $V^L\neq W$ this a contradiction. For (ii)  by adjusting the notation we proceed as in the proof of
b) in (\cite{BZ}, Proposition 2.10).
\end{proof}

\vskip .2in
The following theorem is also standard. It is a part of  (\cite{BZ}, Proposition 2.10 c)) but we make it more explicit.

\begin{Thm}\label{eir-9} 
  Let $L\subset G$ be an open--compact subgroup. Then, for each maximal proper left ideal  $I\subset \mathcal H\left(G, L, \mathcal A\right)$,
  there exists a unique  left ideal $J'$ of $\mathcal H\left(G, \mathcal A\right)$ such that the following three conditions hold:
\begin{itemize}
\item[(i)] $J'\subset  \mathcal H\left(G, \mathcal A\right)\epsilon_L$
\item [(ii)]  $I\subset J'$
\item[(iii)] $\mathcal H\left(G, \mathcal A\right)\epsilon_L/J'$ is irreducible.
\end{itemize}
The left ideal $J'$ is a unique maximal proper left--ideal, denoted by  $J_I=J_{I, L}$, in  $\mathcal H\left(G, \mathcal A\right)\epsilon_L$ which contains $I$.
It is a sum of all proper left ideals
in $\mathcal H\left(G, \mathcal A\right)\epsilon_L$ which contain $I$. Moreover, $\epsilon_L\star J_{I, L}= I$. 
\begin{itemize}
\item[(iv)] Regarding
$$
  \calV(I, L)\overset{def}{=} \mathcal H\left(G, \mathcal A\right)\epsilon_L/J_{I, L}
  $$
as a $(\mathcal A, \ G)$--module, we have that its space of $L$--invariants 
   is isomorphic to (irreducible module)  $\mathcal H\left(G,L, \mathcal A\right)/I$ as a
   $\mathcal H\left(G,L, \mathcal A\right)$--module. Up to isomorphism, 
   $\calV(I, L)$ is a unique irreducible $(\mathcal A, \ G)$--module with this property. 
\item[(v)] The $(\mathcal A, \ G)$--module
  $$
  \calW(I, L)\overset{def}{=} \mathcal H\left(G, \mathcal A\right)\epsilon_L/\mathcal H\left(G, \mathcal A\right)I
  $$
  has a unique maximal proper subrepresentation, and the corresponding quotient is
  $\calV(I, L)$. The canonical projection $\calW(I, L)^L\longrightarrow \calV(I, L)^L$ is isomorphism of $\mathcal H\left(G,L, \mathcal A\right)$--modules.
  \item[(vi)] If $f\in \mathcal H\left(G, L, \mathcal A\right)$ does not belong to all maximal left ideals of $\mathcal H\left(G, L, \mathcal A\right)$,
  then there exists  an irreducible $(\mathcal A, \ G)$--module such that $f$ acts as a non--zero operator. More explicitly, if $f\not\in I$,
  then $f$ is not zero on $\calV(I, L)$.
\item[(vii)] The ideal $I\cap \mathcal A\epsilon_L$ is a prime ideal in  $\mathcal A\epsilon_L\equiv \cal A$. The ideal is maximal, if
  $\calA$--module  $\mathcal H\left(G,L, \mathcal A\right)/I$ is finite.
\end{itemize}
\end{Thm}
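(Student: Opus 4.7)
The plan is to build everything around the cyclic left $\mathcal H\left(G, \mathcal A\right)$--module $V\overset{def}{=}\mathcal H\left(G, \mathcal A\right)\epsilon_L$. Every element $f\epsilon_L\in V$ is left invariant under a sufficiently small open compact subgroup, so $V$ is non--degenerate and hence an $(\mathcal A, \ G)$--module by Lemma \ref{eir-5}, with $V^L=\epsilon_L\mathcal H\left(G, \mathcal A\right)\epsilon_L=\mathcal H\left(G, L, \mathcal A\right)$. For any left $\mathcal H\left(G, \mathcal A\right)$--submodule $J\subset V$, the idempotent identity gives $J^L=J\cap V^L=\epsilon_L J$, which is a left ideal of $\mathcal H\left(G, L, \mathcal A\right)$, and Lemma \ref{agm-31} identifies $(V/J)^L$ with $\mathcal H\left(G, L, \mathcal A\right)/\epsilon_L J$. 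Since $V$ is generated by $\epsilon_L\in V^L$, one has $J=V$ if and only if $\epsilon_L\in J$ if and only if $\epsilon_L J=V^L$; so $J\mapsto \epsilon_L J$ sends proper left submodules of $V$ to proper left ideals of $\mathcal H\left(G, L, \mathcal A\right)$.

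With this correspondence in hand, I would define $J_I$ as the sum of all proper left $\mathcal H\left(G, \mathcal A\right)$--submodules of $V$ that contain $I$. Each summand $J$ has $\epsilon_L J$ a proper left ideal of $\mathcal H\left(G, L, \mathcal A\right)$ containing $I$, so maximality of $I$ forces $\epsilon_L J=I$; summing yields $\epsilon_L J_I=I$, whence $J_I$ is itself proper by the correspondence and is visibly the unique maximal proper submodule of $V$ containing $I$, proving (i)--(iii). For (iv), $\mathcal V(I, L)=V/J_I$ is irreducible by this maximality, has $L$--invariants $\mathcal H\left(G, L, \mathcal A\right)/I$ by exactness, and uniqueness among irreducible $(\mathcal A, \ G)$--modules with this $L$--component is exactly Lemma \ref{eir-6}(ii).

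Part (v) rests on the observation that $\mathcal H\left(G, \mathcal A\right)I\subset V$ (because $I=I\epsilon_L$) and $\epsilon_L\cdot \mathcal H\left(G, \mathcal A\right)I=\mathcal H\left(G, L, \mathcal A\right)I=I$, so $\mathcal H\left(G, \mathcal A\right)I$ is a proper submodule containing $I$ and is therefore contained in $J_I$; the quotient $J_I/\mathcal H\left(G, \mathcal A\right)I$ is then the unique maximal proper subrepresentation of $\mathcal W(I, L)$, and exactness makes $\mathcal W(I, L)^L\to \mathcal V(I, L)^L$ the identity on $\mathcal H\left(G, L, \mathcal A\right)/I$. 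For (vi), pick a maximal left ideal $I$ with $f\notin I$; then $f\cdot (\epsilon_L+I)=f+I$ is non--zero in $\mathcal V(I, L)^L$, so $f$ acts non--trivially on $\mathcal V(I, L)$.

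For (vii), using $\mathcal A\simeq \mathcal A\epsilon_L$ one has $a\epsilon_L\in I$ iff $a$ kills $\epsilon_L+I$ in $\mathcal V(I, L)^L$, which by irreducibility of $\mathcal V(I, L)$ is the same as $a\in \Ann_{\mathcal A}\mathcal V(I, L)$; primality then follows from Lemma \ref{agm-400}. For the maximality assertion under finiteness of $\mathcal H\left(G, L, \mathcal A\right)/I$ over $\mathcal A$, I would adapt the proof of Theorem \ref{agm-4}(i): if $\mathfrak m\mathcal V(I, L)=\mathcal V(I, L)$ for every maximal ideal $\mathfrak m\subset \mathcal A$, Lemma \ref{agm-3} gives $\mathfrak m\mathcal V(I, L)^L=\mathcal V(I, L)^L$, and since $\mathcal V(I, L)^L$ is finitely generated over $\mathcal A$, Lemma \ref{pav-5} forces $\mathcal V(I, L)^L=0$, contradicting $\epsilon_L+I\neq 0$; so some $\mathfrak m$ annihilates $\mathcal V(I, L)$ and coincides with the prime annihilator. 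The main technical point is the identity $\epsilon_L J_I=I$ secured from the correspondence in the first paragraph; this is the improvement over (\cite{BZ}, Proposition 2.10 c)) the theorem advertises, and once it is in place the remaining parts unwind routinely from exactness of $V\mapsto V^L$ and the idempotent identity $J^L=\epsilon_L J$.
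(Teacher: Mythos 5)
Your proposal is correct and follows essentially the same route as the paper: both define $J_I$ as the sum of all proper left ideals of $\mathcal H(G,\mathcal A)$ contained in $\mathcal H(G,\mathcal A)\epsilon_L$ and containing $I$, both use maximality of $I$ to force $\epsilon_L J=I$ for every such ideal, and both invoke exactness of $(\,\cdot\,)^L$ and Lemma~\ref{eir-6}(ii) for part (iv). Your write-up is somewhat more explicit than the paper's in parts (v) and (vii) --- in particular you spell out the identification of $I\cap\mathcal A\epsilon_L$ with $\Ann_{\mathcal A}\mathcal V(I,L)$ and the Nakayama-style argument for maximality, which the paper only gestures at --- but the underlying argument is the same.
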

\begin{proof} If $J$ is a proper left ideal contained in $ \mathcal H\left(G, \mathcal A\right)\epsilon_L$ which contains $I$. Then,
  $\epsilon_L J$ is a left ideal in $\mathcal H\left(G,L, \mathcal A\right)$ which contains $I$. Since $I$ is maximal proper left ideal. Hence,
  $\epsilon_L J=I$ or  $\epsilon_L J=\mathcal H\left(G,L, \mathcal A\right)$. In the latter case, we have
  $$
  J\supset  \mathcal H\left(G, \mathcal A\right) \epsilon_L J= \mathcal H\left(G, \mathcal A\right)  \mathcal H\left(G,L, \mathcal A\right)=\mathcal H\left(G, \mathcal A\right) \epsilon_L.
  $$
  Hence,
  $$
  J=\mathcal H\left(G, \mathcal A\right) \epsilon_L.
  $$
  This is a contradiction. Therefore, if $J_I$  denotes the sum of all proper left ideals $J$ containing $I$, then 
  $$
  \epsilon_L J_I=I.
  $$
  Obviously, $J_I$  satisfies conditions (i) --(iii).  The uniqueness is clear from its construction.
  Of course, we need to establish the existence of at least one such ideal $J$ to be able to define $J_I$. This is easy.
  We just need to take $J=\mathcal H\left(G, \mathcal A\right) I$.

  For (iv),  regarding them   as  $(\mathcal A, \ G)$--modules and  using Lemma \ref{agm-31}, we have
  $$
  \left(\mathcal H\left(G,L, \mathcal A\right)\epsilon_L/J_I\right)^L= \mathcal H\left(G,L, \mathcal A\right)/ \epsilon_L J_I= \mathcal H\left(G,L, \mathcal A\right)/ \epsilon_L I.
  $$
  The uniqueness in the last part of (iv) follows from Lemma \ref{eir-6} (ii).
  Next, (v) is just the reformulation of maximality and uniqueness of $J_I$. (vi) is obvious. We remark that
  maximal left ideals  of $\mathcal H\left(G, L, \mathcal A\right)$ exist by Zorn's lemma. Finally, (vii) follows from Lemma  \ref{pav-5} using simplified arguments of Lemma \ref{agm-400}
  and Theorem \ref{agm-4}.
   \end{proof}

\vskip .2in

\begin{Cor}\label{eir-900} 
  Let $L\subset G$ be an open--compact subgroup. Then, for each irreducible $\mathcal H\left(G, L, \mathcal A\right)$--module $U$ there exists a unique  to an isomorphism
  irreducible $\left(\mathcal A, \ G\right)$--module $V$ such that its space of $L$--invariants is isomorphic to $U$ as $\mathcal H\left(G, L, \mathcal A\right)$--modules.
Furthermore, if the annhilator of a $\calA$--module $U$ is equal to the annhilator of $V$ (see Lemma \ref{agm-400} for the definition of the annhilator).
In addition, if   $U$ is $\calA$--finite, then the annhilator of $V$ is a maximal ideal.  
\end{Cor}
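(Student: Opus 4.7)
The plan is to apply Theorem \ref{eir-9}(iv) almost directly, using the fact that an irreducible module over a ring with identity is cyclic, and then to pass between annihilators of $V$ and of $V^L$ via Lemma \ref{agm-3}.

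For existence and uniqueness: since $\calH(G, L, \calA)$ possesses the identity $\epsilon_L$, any irreducible $\calH(G, L, \calA)$-module $U$ is cyclic. Pick a nonzero $u\in U$ and set $I:=\Ann_{\calH(G,L,\calA)}(u)$; then $I$ is a maximal proper left ideal and $U\simeq \calH(G,L,\calA)/I$. Theorem \ref{eir-9}(iv) then produces an irreducible $(\calA,G)$-module $V:=\calV(I,L)$ whose space of $L$-invariants is isomorphic to $\calH(G,L,\calA)/I\simeq U$ as $\calH(G,L,\calA)$-modules. Uniqueness up to isomorphism of such a $V$ is immediate from Lemma \ref{eir-6}(ii).

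For the annihilator equality: $\calA$ embeds in $\calH(G,L,\calA)$ via $a\mapsto a\epsilon_L$, and since $\epsilon_L$ acts as the identity on $V^L$ we have $(a\epsilon_L).v=a.v$ for $v\in V^L$. So the $\calA$-module structure on $V^L$ inherited from $V$ coincides with the one on $U$, giving $\Ann_\calA(U)=\Ann_\calA(V^L)$. The inclusion $\Ann_\calA(V)\subset \Ann_\calA(V^L)$ is trivial. For the reverse, let $a\in \Ann_\calA(V^L)$ and consider the $(\calA,G)$-submodule $aV\subset V$; by irreducibility either $aV=0$ or $aV=V$. In the latter case Lemma \ref{agm-3} gives $V^L=(aV)^L=aV^L=0$, contradicting $V^L\simeq U\neq 0$. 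Hence $aV=0$ and $a\in \Ann_\calA(V)$.

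For the last assertion, assume $U$ (and hence $V^L$) is finitely generated over $\calA$. For every maximal ideal $\frakm\subset\calA$, $\frakm V$ is a $(\calA,G)$-submodule of $V$, so equals $0$ or $V$. If $\frakm V=V$ held for every maximal $\frakm$, then Lemma \ref{agm-3} would give $\frakm V^L=V^L$ for every maximal $\frakm$, and Lemma \ref{pav-5} applied to the finitely generated $\calA$-module $V^L$ would force $V^L=0$, contradicting $V^L\neq 0$. Thus some maximal $\frakm$ satisfies $\frakm V=0$, so $\frakm\subset \Ann_\calA(V)\subsetneq \calA$ and by maximality $\Ann_\calA(V)=\frakm$. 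The only nonformal step is the annihilator equality, and the key tool throughout is Lemma \ref{agm-3}, which lets us convert information about $\Ann_\calA(V^L)$ into information about $\Ann_\calA(V)$; everything else is a direct application of Theorem \ref{eir-9} and Lemma \ref{eir-6}.
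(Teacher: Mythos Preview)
Your proof is correct. For existence and uniqueness you do exactly what the paper does (invoke Theorem \ref{eir-9} and Lemma \ref{eir-6}), but for the annihilator statements your argument diverges from the paper's in a way worth noting.

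For the equality $\Ann_{\calA}(U)=\Ann_{\calA}(V)$, the paper argues indirectly: it first observes (as in Lemma \ref{agm-400}) that $\frakp:=\Ann_{\calA}(U)$ is prime, then factors the $\calH(G,L,\calA)$-action on $U$ through $\calH(G,L,\calA/\frakp)$, applies the existence part over $\calA/\frakp$ to produce an irreducible $(\calA/\frakp,G)$-module $V_1$ with $V_1^L\simeq U$, regards $V_1$ as an $(\calA,G)$-module, and concludes $V_1\simeq V$ by uniqueness---forcing $\frakp\subset\Ann_{\calA}(V)$, with equality because anything larger would enlarge $\Ann_{\calA}(U)$. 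Your route is more direct: you bypass the quotient-ring construction entirely and use Lemma \ref{agm-3} (applied to the principal ideal $(a)$) together with irreducibility of $V$ to show that any $a$ killing $V^L$ must kill $V$. This is shorter and more transparent; the paper's argument, on the other hand, makes the role of the prime ideal $\frakp$ explicit and foreshadows the passage to $\calA/\frakp$ that recurs elsewhere (e.g.\ Theorem \ref{agm-8}).

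For the final maximality claim, the paper simply cites Theorem \ref{eir-9}(vii), whose proof in turn points to Lemma \ref{pav-5} and the argument of Theorem \ref{agm-4}. You have written out that underlying argument directly, which is fine and arguably clearer than chasing the chain of references.
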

\begin{proof} This first part is immediate from Lemma \ref{eir-6} and Theorem \ref{eir-9}. Next, as in the proof of Lemma \ref{agm-400}, the annhilator $\Ann_{\mathcal A}(U)$
  is a prime ideal, say $\mathfrak p$. Now, the action of  $\mathcal H\left(G, L, \mathcal A\right)$ on $U$ factors through the canonical map
  $\mathcal H\left(G, L, \mathcal A\right)\longrightarrow  \mathcal H\left(G, L, \mathcal A/\mathfrak p \right)$. In this way, we may regards $U$ as a
  $\mathcal H\left(G, L, \mathcal A/\mathfrak p \right)$--module. Now, we use the first part of the proof which guarantees that there exists, unique up to an isomorphism, an irreducible
  $\left(\mathcal A/\mathfrak p, \ G\right)$--module  $V_1$  such that its space of $L$--invariants is isomorphic to $U$ as $\mathcal H\left(G, L, \mathcal A/\mathfrak m\right)$--modules.
  If we regard $V_1$ as a  $\left(\mathcal A, \ G\right)$--module, then we obtain an irreducible module with the space of $L$--invariants isomorphic to $U$ as $\mathcal H\left(G, L,
  \mathcal A\right)$--modules.  Hence $V_1$ is isomorphic to $V$. This clearly implies that the annhilator of $V$ contains  $\mathfrak p$. They are clearly equal or otherwise the annhilator of
  $U$ would be larger.   Finally, the last claim follows from  Theorem \ref{eir-9} (vii).
\end{proof}

\section{An application of Theorem \ref{eir-9}}\label{aeir}

In this section we again assume that  $\mathcal A$ is a $\mathbb Q$--algebra, and $G$ an $l$--group. The goal of this section is to discuss
$$
\End_{(\mathcal A, \ G)}\left(V\right)= \End_{\mathcal H\left(G, \ \calA\right)}\left(V\right),
$$
for an  irreducible $(\mathcal A, \ G)$--module $V$. We also consider
$$
\End_{\mathcal H\left(G, \ L, \ \calA\right)}\left(V^L\right),
$$
for an open compact subgroup $L\subset G$ such that $V^L\neq 0$. It is obvious that the restriction map
gives an embedding
$$
\End_{(\mathcal A, \ G)}\left(V\right)= \End_{\mathcal H\left(G, \ \calA\right)}\left(V\right)\hookrightarrow
\End_{\mathcal H\left(G, \ L, \ \calA\right)}\left(V^L\right).
$$
In general, they are both division algebras central over the field of fractions $k(\mathfrak p)$ of $A/\mathfrak p$ where $\mathfrak p$ is annhilator of $V$ in $\cal A$.
We have the following result (see \cite{cas}, Proposition 2.2.2 for the  proof of the similar result by different means):

\begin{Thm}\label{aeir-1} Assume that $V$ is an  irreducible $(\mathcal A, \ G)$--module. Then, the restriction map  $\End_{(\mathcal A, \ G)}\left(V\right)\longrightarrow
\End_{\mathcal H\left(G, \ L, \ \calA\right)}\left(V^L\right)$ induces an isomorphism of division algebras over $k(\mathfrak p)$. 
  \end{Thm}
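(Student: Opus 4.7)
The plan is to establish surjectivity of the restriction map, since injectivity was already observed before the theorem. Given $\phi\in \End_{\mathcal H(G, L, \mathcal A)}(V^L)$, I will produce an $(\mathcal A, G)$--endomorphism $\tilde\phi$ of $V$ whose restriction to $V^L$ is $\phi$.

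First, by Theorem \ref{eir-9}(iv) I may identify $V$ with $\calV(I, L)$ for some maximal proper left ideal $I\subset \mathcal H(G, L, \mathcal A)$, so that $V^L\cong \mathcal H(G, L, \mathcal A)/I$. Since $\mathcal H(G, \mathcal A)\epsilon_L$ is a $(\mathcal H(G, \mathcal A), \mathcal H(G, L, \mathcal A))$--bimodule via the identification $\mathcal H(G, L, \mathcal A) = \epsilon_L \mathcal H(G, \mathcal A)\epsilon_L$ of (\ref{eir-8}), the routine computation $\mathcal H(G, \mathcal A)\epsilon_L\cdot I = \mathcal H(G, \mathcal A) I$ (using $\epsilon_L i = i$ for $i\in I$) yields a canonical isomorphism of left $\mathcal H(G, \mathcal A)$--modules
$$
\calW(I, L)\;\cong\;\mathcal H(G, \mathcal A)\epsilon_L \otimes_{\mathcal H(G, L, \mathcal A)} V^L.
$$
Then $1\otimes \phi$ is an $\mathcal H(G, \mathcal A)$--linear endomorphism $\psi$ of $\calW(I, L)$, i.e.\ an $(\mathcal A, G)$--endomorphism via Lemma \ref{eir-5}, and tracing through the identification $\calW(I, L)^L\cong V^L$ shows that $\psi$ restricts to $\phi$ on $L$--invariants.

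Second, I will descend $\psi$ along the canonical surjection $\pi : \calW(I, L)\twoheadrightarrow \calV(I, L) = V$. Let $K = \ker\pi$. Theorem \ref{eir-9}(v) asserts that $\pi$ restricts to an isomorphism on $L$--invariants, so by exactness of $V'\mapsto V'^L$ (Lemma \ref{agm-31}) one has $K^L = 0$. Suppose, for contradiction, that $\psi(K)\not\subseteq K$. Then $\pi\circ\psi|_K : K\to V$ is a nonzero $(\mathcal A, G)$--module map, hence surjective by irreducibility of $V$; applying the exact functor $(\cdot)^L$ produces a surjection $K^L\twoheadrightarrow V^L$, which is absurd since $K^L = 0$ while $V^L\neq 0$. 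Thus $\psi(K)\subseteq K$, so $\psi$ descends to an $(\mathcal A, G)$--endomorphism $\tilde\phi$ of $V$ with $\tilde\phi|_{V^L} = \phi$ by construction. Because composition on $V^L$ determines composition on $V$ by injectivity, the restriction map is a ring isomorphism.

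Schur's lemma combined with Lemma \ref{agm-400} and Lemma \ref{eir-6}(i) then identifies both endomorphism rings as division algebras central over $k(\mathfrak p)$, and the isomorphism obviously respects this structure. The main obstacle is precisely the descent step: one must exclude the possibility that $\psi$ drags the maximal submodule $K$ out into the irreducible quotient $V$. The key input that averts this is the equality $\calW(I, L)^L = \calV(I, L)^L$ from Theorem \ref{eir-9}(v), together with exactness of the $L$--invariants functor; the latter is the point where the assumption that $\mathcal A$ is a $\mathbb Q$--algebra enters essentially, through averaging over the finite quotient $L/L'$.
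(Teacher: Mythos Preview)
Your proof is correct and takes a genuinely different route from the paper's. The paper proceeds by first describing $\End_{\mathcal H(G,L,\mathcal A)}(\mathcal H(G,L,\mathcal A)/I)$ explicitly as right multiplication by classes $f+I$ with $I\star f\subset I$ (Lemma \ref{aeir-2}), and then lifts such a $\varphi_f$ to $\calV(I,L)$ by the same formula $h+J_{I,L}\mapsto h\star f+J_{I,L}$, checking well-definedness via the inclusion $J_{I,L}\star f\subset J_{I,L}$, which is deduced directly from the maximality characterization of $J_{I,L}$ in Theorem \ref{eir-9}. Your argument instead lifts functorially to $\calW(I,L)$ via the tensor identification $\calW(I,L)\cong \mathcal H(G,\mathcal A)\epsilon_L\otimes_{\mathcal H(G,L,\mathcal A)}V^L$ and then descends to $\calV(I,L)$ using the single observation $K^L=0$ (from Theorem \ref{eir-9}(v) and Lemma \ref{agm-31}) together with irreducibility of $V$. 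The paper's approach yields an explicit formula for the lifted endomorphism, which is convenient for later computations; your approach is more conceptual, avoids the auxiliary Lemma \ref{aeir-2} entirely, and the descent step would equally well lift \emph{any} $\mathcal H(G,L,\mathcal A)$--map $V^L\to U^L$ (for $U$ irreducible with $U^L\neq 0$) to an $(\mathcal A,G)$--map $V\to U$, so it in fact reproves Lemma \ref{eir-6}(ii) along the way.
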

\begin{proof} We use Theorem \ref{eir-9}.  We select maximal proper left ideal $I\subset \mathcal H\left(G, L, \mathcal A\right)$ such that we have the following isomorphism of
  $(\mathcal A, \ G)$--modules
  $$
  V\simeq \calV(I, L)
  $$
  Then, 
  $$
  V^L\simeq \mathcal H\left(G,L, \mathcal A\right)/I
  $$
  as $\mathcal H\left(G,L, \mathcal A\right)$--modules.

  Now, we give elementary description of
 $$
\End_{\mathcal H\left(G, \ L, \ \calA\right)}\left(\mathcal H\left(G,L, \mathcal A\right)/I\right).
$$

 First, let $f+I\in \mathcal H\left(G,L, \mathcal A\right)/I$ such that $I\star f\subset I$. Then, the map
 $h+I\longmapsto h\star f+ I$ belongs to $\End_{\mathcal H\left(G, \ L, \ \calA\right)}\left(\mathcal H\left(G,L, \mathcal A\right)/I\right)$. We call this map $\varphi_f$. Conversely, let
$$
\varphi\in
\End_{\mathcal H\left(G, \ L, \ \calA\right)}\left(\mathcal H\left(G,L, \mathcal A\right)/I\right).
$$

If we put $f+I=\varphi(\epsilon_L+I)$, then
  $$
  I\star f+I= I\star (f+I)=I\star \varphi(\epsilon_L+I)=\varphi(I\star \epsilon_L+I)=\varphi(I)=I.
  $$
  Hence, $I\star f\subset I$. Also,
  $$
  \varphi(h+I)= \varphi(h\star \epsilon_L + I) =h\star f+I, \ \ h\in  \mathcal H\left(G,L, \mathcal A\right).
  $$
  Thus, $\varphi=\varphi_f$.   This proves the following lemma:
  \begin{Lem}\label{aeir-2}
    $\mathcal A$--algebra with identity $\epsilon_L+I$  consisting of all $f+I$ such that $I\star f\subset I$ is
    anti--isomorphic to $\End_{\mathcal H\left(G, \ L, \ \calA\right)}\left(\mathcal H\left(G,L, \mathcal A\right)/I\right)$: $f+I\longmapsto \varphi_f$, $\varphi_f\varphi_g=\varphi=\varphi_{g\star f}$.
  \end{Lem}

  Now,we prove the theorem. By the remark before the statement of the theorem it is enough to show that the restriction map is surjective. Let $\varphi\in
\End_{\mathcal H\left(G, \ L, \ \calA\right)}\left(\mathcal H\left(G,L, \mathcal A\right)/I\right)$. By Lemma \ref{eir-2}, we can write $\varphi=\varphi_f$ for some
$f\in \mathcal H\left(G, \ L, \ \calA\right)$ such that $I\star f\subset I$. Using Theorem \ref{eir-9}, we can write
$$
\calV(I, L)\overset{def}{=} \mathcal H\left(G, \mathcal A\right)\epsilon_L/\mathcal H\left(G, \mathcal A\right)J_{I, L},
  $$
where $J_{I, L}$ is a unique maximal proper left ideal in $\mathcal H\left(G, \mathcal A\right)\epsilon_L$. Moreover,
$$
\epsilon_L\star J_{I, L}= I.
$$

After these preparations we define $\psi\in \End_{(\mathcal A, \ G)}\left(\calV(I, L)\right)$ by
$$
\varphi(h+J_{I, L})= h\star f + J_{I, L}, \ \ h\in \mathcal H\left(G, \mathcal A\right)\epsilon_L.
$$
First of all, this map is well--defined since $h-h'\in J_{I, L}$ implies that
$$
(h-h') \star f \in  J_{I, L}\star f.
$$

We observe that $J_{I, L}\star f$ is left ideal in $\mathcal H\left(G, \mathcal A\right)\epsilon_L$. Also, we note that
$$
\epsilon_L\star J_{I, L}\star f =I\star f\subset I.
$$
Consequently, we have the following. The sum $J_{I, L}\star f+ \mathcal H\left(G, \mathcal A\right)I$ is a left ideal in
$\mathcal H\left(G, \mathcal A\right)\epsilon_L$  which contains $I$, and satisfies
$$
\epsilon_L\star\left(J_{I, L}\star f+\mathcal H\left(G, \mathcal A\right)I\right)=I.
$$
This shows that  this ideal is proper ideal in $\mathcal H\left(G, \mathcal A\right)\epsilon_L$, and contains $I$. Thus, it is contained in $J_L$.
In particular, we have
$J_{I, L}\star f\subset J_{I, L}$. Hence, $(h-h')\star f\in J_{I, L}$. This shows that $\varphi$ is well--defined. Obviously, it belongs to
$\End_{(\mathcal A, \ G)}\left(\calV(I, L)\right)$. Finally, the space of $L$--invariants in $\calV(I, L)$ is equal to

$$
\epsilon_L\star  \calV(I, L)=\epsilon_L\mathcal H\left(G, \mathcal A\right)\epsilon_L/J_{I, L}\simeq \mathcal H\left(G,L, \mathcal A\right)/I.
$$
The isomorphism is $h+J_{I, L}\longmapsto h+I$,  for $h\in H\left(G, L, \mathcal A\right)$, and it is an isomorphism of $\mathcal H\left(G,L, \mathcal A\right)$--modules.
We transfer $\varphi$ via that isomorphism to $\epsilon_L\star \calV(I, L)$. As a result,  we obtain the following map:
$$
h+J_{I, L}\longmapsto h\star f+J_{I, L},
$$
which is clearly the restriction of $\psi$. \end{proof}

\section{Another application of Theorem \ref{eir-9}}\label{aaeir}

The aim of this section is to prove the following theorem:

\begin{Thm}\label{aaeir-1} Assume that $\mathcal A$ is a field  (and then extension of $\mathbb Q$). Let $G$ be an $l$--group and $L\subset G$ an open compact subgroup.
  Let $V$ be an irreducible $(\mathcal A, \ G)$--module such that $V^L\neq 0$ and  $\mathcal A$--finite dimensional (i.e., $V^L$ is an $\mathcal A$--admissible 
  irreducible $\mathcal H\left(G, \ L, \ \mathcal A\right)$--module). Then,  for any field extension $\mathcal A\subset \mathcal B$, there exists   irreducible  $(\mathcal B, \ G)$--modules
  $V_1, \ldots, V_t$ such that the following holds:
  \begin{itemize}
  \item[(i)] $V^L_i\neq 0$ for all  $1\le i\le t$. 
  \item[(ii)] $V^L_i$ are  $\mathcal B$--admissible  irreducible $\mathcal H\left(G, \ L, \ \mathcal B\right)$--modules.
  \item[(iii)] $V_{\mathcal B}\overset{def}{=}\mathcal B \otimes_{\mathcal A} V \simeq V_1\oplus \cdots \oplus V_t$ as $(\mathcal B, \ G)$--modules.
    \end{itemize}
\end{Thm}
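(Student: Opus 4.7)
The strategy is to first decompose the $L$-invariants $V^L \otimes_{\mathcal A} \mathcal B$ as an $\mathcal H(G,L,\mathcal B)$-module, and then lift the decomposition to the category of $(\mathcal B,G)$-modules using Corollary \ref{eir-900} and the endomorphism-ring identification of Theorem \ref{aeir-1}.

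Set $\mathcal R := \mathcal H(G,L,\mathcal A)$ and $U := V^L$. By Lemma \ref{eir-6}(i), $U$ is a simple $\mathcal R$-module, and by hypothesis $\dim_{\mathcal A} U < \infty$. Jacobson density yields $\mathcal R/\Ann_{\mathcal R}(U) \cong \End_{\mathcal D}(U)$, where $\mathcal D := \End_{\mathcal R}(U)$ is a finite-dimensional division $\mathcal A$-algebra. Because $\mathcal A$ has characteristic zero, the center of $\mathcal D$ is separable over $\mathcal A$, so $\mathcal D \otimes_{\mathcal A} \mathcal B$ is a finite-dimensional semisimple $\mathcal B$-algebra, and hence so is $(\mathcal R/\Ann_{\mathcal R}(U)) \otimes_{\mathcal A} \mathcal B$. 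Consequently $U_{\mathcal B} := U \otimes_{\mathcal A} \mathcal B$ is a semisimple $\mathcal R_{\mathcal B} := \mathcal H(G,L,\mathcal B)$-module of finite length; write $U_{\mathcal B} = W_1 \oplus \cdots \oplus W_t$ with each $W_i$ simple and finite-dimensional over $\mathcal B$. For each $W_i$, Corollary \ref{eir-900} supplies a unique up to isomorphism irreducible $(\mathcal B,G)$-module $V_i$ with $V_i^L \cong W_i$ as $\mathcal R_{\mathcal B}$-modules; this yields (i) and (ii).

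It remains to identify $V_{\mathcal B}$ with $V_1 \oplus \cdots \oplus V_t$. By Theorem \ref{aeir-1}, restriction identifies $\End_{(\mathcal A,G)}(V)$ with $\mathcal D$. Extending scalars produces the chain $\mathcal D \otimes_{\mathcal A} \mathcal B \hookrightarrow \End_{(\mathcal B,G)}(V_{\mathcal B}) \hookrightarrow \End_{\mathcal R_{\mathcal B}}(V_{\mathcal B}^L)$, where the second arrow is injective because $V_{\mathcal B}$ is generated as a $(\mathcal B,G)$-module by its $L$-invariants (since $V$, being irreducible, is generated by $V^L$). A double-commutant computation for the finite-dimensional algebra $\End_{\mathcal D}(U)$ identifies $\End_{\mathcal R_{\mathcal B}}(U_{\mathcal B})$ with $\mathcal D \otimes_{\mathcal A} \mathcal B$, and the composite of the chain is the identity; hence $\End_{(\mathcal B,G)}(V_{\mathcal B}) \cong \mathcal D \otimes_{\mathcal A} \mathcal B$ is semisimple. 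Its central primitive idempotents split $V_{\mathcal B}$ into a direct sum of $(\mathcal B,G)$-modules indexed by the simple factors of $\mathcal D \otimes_{\mathcal A} \mathcal B$, and matrix units inside each simple factor further split each block into pairwise isomorphic irreducible summands. Since each irreducible constituent is generated by its nonzero $L$-invariants (otherwise the projection of $V_{\mathcal B}$ onto that constituent would vanish on $V_{\mathcal B}^L$, contradicting generation), matching with $W_1 \oplus \cdots \oplus W_t$ via Lemma \ref{eir-6} and Corollary \ref{eir-900} identifies the constituents with $V_1, \ldots, V_t$, proving (iii).

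The main obstacle is the identification $\End_{(\mathcal B,G)}(V_{\mathcal B}) \cong \mathcal D \otimes_{\mathcal A} \mathcal B$: Theorem \ref{aeir-1} provides such an identification only for irreducible modules, whereas $V_{\mathcal B}$ is in general not irreducible. Bridging this requires combining the generation of $V_{\mathcal B}$ by its $L$-invariants (which gives injectivity of the restriction to $\End_{\mathcal R_{\mathcal B}}(V_{\mathcal B}^L)$) with a double-commutant computation for the finite-dimensional algebra $\mathcal R/\Ann_{\mathcal R}(U)$; the characteristic-zero hypothesis is then precisely what guarantees that $\mathcal D \otimes_{\mathcal A} \mathcal B$ is semisimple and so produces the idempotent decomposition.
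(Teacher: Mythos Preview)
Your treatment of parts (i) and (ii) is fine and essentially the same as the paper's: both arguments use Jacobson density to identify the image of $\mathcal R$ in $\End_{\mathcal A}(U)$ with $\End_{\mathcal D}(U)$, then semisimplicity of $\End_{\mathcal D}(U)\otimes_{\mathcal A}\mathcal B$ to decompose $U_{\mathcal B}=W_1\oplus\cdots\oplus W_t$, and finally Corollary~\ref{eir-900}/Theorem~\ref{eir-9} to produce the irreducible $(\mathcal B,G)$--modules $V_i$ with $V_i^L\cong W_i$.

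For (iii), however, your argument has a genuine gap. Your identification $\End_{(\mathcal B,G)}(V_{\mathcal B})\cong\mathcal D\otimes_{\mathcal A}\mathcal B$ is correct, and the primitive idempotents indeed split $V_{\mathcal B}=\bigoplus_k M_k$ with each $\End_{(\mathcal B,G)}(M_k)$ a division ring and $M_k^L\cong W_k$. But a division endomorphism ring only forces $M_k$ to be \emph{indecomposable}, not irreducible. Concretely, each $M_k$ is a quotient of $\mathcal W(I_k,L)$ (Theorem~\ref{eir-9}(v)) and hence surjects onto $V_k$; the kernel $N_k$ of $M_k\twoheadrightarrow V_k$ satisfies $N_k^L=0$, and nothing in your argument rules out $N_k\neq 0$. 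The abstract situation \emph{can} occur: for $A$ the upper--triangular $2\times 2$ matrices over a field, $e=e_{22}$, and $M=Ae_{11}\oplus Ae_{12}$ the first column, one has $M$ generated by $eM$, $eM$ simple over $eAe$, $\End_A(M)\cong\End_{eAe}(eM)$ a field, yet $M$ is not simple (its unique proper submodule has trivial $e$--invariants). So the sentence ``matrix units \ldots\ further split each block into pairwise isomorphic irreducible summands'' is precisely the unjustified step.

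The paper closes this gap by a direct ideal computation rather than via endomorphism rings. Writing $V\cong\mathcal H(G,\mathcal A)\epsilon_L/J$ and $V_i\cong\mathcal H(G,\mathcal B)\epsilon_L/J_i$ as in Theorem~\ref{eir-9}, Lemma~\ref{aaeir-7} proves the equality $\mathcal B\otimes_{\mathcal A}J=J_1\cap\cdots\cap J_t$; the nontrivial inclusion $J_1\cap\cdots\cap J_t\subseteq\mathcal B\otimes_{\mathcal A}J$ is exactly the statement that $V_{\mathcal B}$ has no nonzero submodule with vanishing $L$--invariants, and is proved by writing an element of the intersection over an $\mathcal A$--basis of $\mathcal B$ and using the maximality characterization of $J$ together with Remark~\ref{aaeir-4}. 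Once this is in hand, one gets an embedding $V_{\mathcal B}\hookrightarrow V_1\oplus\cdots\oplus V_t$, and surjectivity follows by comparing $\mathcal B$--dimensions of $L$--invariants. If you want to salvage your approach, you would still need to prove that $V_{\mathcal B}$ (equivalently each $M_k$) has no nonzero submodule with zero $L$--invariants, and that is essentially the content of Lemma~\ref{aaeir-7}.
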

\begin{proof} First, we recall that $\mathcal H\left(G, \ L, \ \mathcal A\right)$ is an
 associative $\mathcal A$--algebra with identity $\epsilon_{L, \mathcal A}$
(see \ref{eir-7}). We have 
$$
\mathcal H\left(G, \ L, \ \mathcal B\right)=  \mathcal B \otimes_{\mathcal A} \mathcal H\left(G, \ L, \ \mathcal A\right),
$$
and
$$
\epsilon_{L, \mathcal B}= 1 \otimes_{\mathcal B} \epsilon_{L, \mathcal A}.
$$
Next, by Lemma \ref{agm-7} (i), we have
  $$
  \left(\mathcal B\otimes_{\mathcal A} V\right)^L= \mathcal B\otimes_{\mathcal A} V^L.
  $$
  Next, since $V$ is irreducible and $V^L\neq 0$, we conclude that  $V^L$ is an irreducible $\mathcal H\left(G, \ L, \ \mathcal A\right)$--module (see Lemma \ref{eir-6} (i)).
  Put
  $$
  W=V^L,
  $$
  and
  $$
  W_{\mathcal B}= \mathcal B\otimes_{\mathcal A} W.
  $$
  Obviously, the later is a $\mathcal B$--admissible module for $\mathcal H\left(G, \ L, \ \mathcal B\right)$.

 Let $$
  \varphi_{\mathcal A, W}: \mathcal H\left(G, \ L, \ \mathcal A\right)\longrightarrow
\End_{\mathcal A}(W)
$$
be the corresponding homomorphism of $\mathcal A$--algebras.  We let $\mathcal H_{\mathcal A, W}$ be the image of $\varphi_{\mathcal A, W}$. Similar notation we introduce for the field $\mathcal B$.
Then,  we may take
$$
\varphi_{\mathcal B, W_{\mathcal B}}= id_{\mathcal B} \otimes_{\mathcal B} \varphi_{\mathcal A, W}.
$$

Next, by Schur's lemma, we have that
  \begin{equation}\label{aaeir-200}
  \mathcal D\overset{def}{=} \End_{\mathcal H\left(G, \ L, \ \calA\right)}\left(W\right)
  \end{equation}
which center contained $\mathcal A$. Since, by the assumption $V^L$ is $\mathcal A$--finite dimensional, we conclude that $D$ is finite $\mathcal A$--dimensional.
Hence, we have the following standard result:

\begin{Lem}\label{aaeir-2}
  Maintaining above assumptions, we have the following:
  \begin{itemize}
  \item[(i)] $\mathcal H_{\mathcal A, W}$ is simple $\mathcal A$--algebra; its unique simple module up to an isomorphism is
    $W$.
  \item[(ii)]  $\mathcal H_{\mathcal A, W}= \End_{\mathcal D}(W)$.
   \item[(iii)]   $\mathcal H_{\mathcal B, W_{\mathcal B}}= \mathcal B\otimes_{\mathcal A} \End_{\mathcal D}(W)$ is a semisimple $\mathcal B$--algebra.
  \end{itemize}
  \end{Lem}
\begin{proof} (ii) is a consequence of Jacobson's density theorem (known as a Wedderburn's theorem, see \cite{lang}, Chapter XVII, Corollary 3.5).  (i) is well--known once we have (ii)
  (see \cite{lang}, Chapter XVII, Theorem 5.5). For  (iii), we note that  (\cite{lang}, Chapter XVII, Theorem 6.2) implies that
  $\mathcal B\otimes_{\mathcal A} \End_{\mathcal D}(W)$ is a semisimple $\mathcal B$--algebra. Finally, we have

\begin{align*}
  \mathcal H_{\mathcal B, W_{\mathcal B}} &=\varphi_{\mathcal B}\left(\mathcal H\left(G, \ L, \ \mathcal A\right)\right)\\
  & = id_{\mathcal B} \otimes_{\mathcal B} \varphi_{\mathcal A, W} \left(\mathcal B\otimes_{\mathcal A} \mathcal H\left(G, \ L, \ \mathcal A\right)\right)\\
  &=   \mathcal B\otimes_{\mathcal A}  \mathcal H_{\mathcal A, W}\\
  &= \mathcal B\otimes_{\mathcal A} \End_{\mathcal D}(W). 
\end{align*}
This completes the proof of (iii). 
\end{proof}

As a corollary of Lemma \ref{aaeir-2} (iii),  there exists $\mathcal B$--admissible modules $W_1, W_2, \ldots W_t$ of $\mathcal H_{\mathcal B, W_{\mathcal B}}$ (and consequently of
$\mathcal H\left(G, \ L, \ \mathcal B\right)$) such that

\begin{equation}\label{aaeir-3}
\mathcal B\otimes_{\mathcal A} V^L= \mathcal B\otimes_{\mathcal A} W=  W_{\mathcal B}\simeq W_1\oplus W_2 \oplus \cdots \oplus W_t
\end{equation}
as $\mathcal H\left(G, \ L, \ \mathcal B\right)$--modules.

Now, we apply Theorem \ref{eir-9}. Select $v\in V^L$, $v\neq 0$, and decompose it according to the decomposition in (\ref{aaeir-3}):
\begin{equation}\label{aaeir-30}
v=\sum_{i=1}^t \ w_i \ \ w_i\in W_i.
\end{equation}
We let

\begin{align*}
&  I\overset{def}{=} \Ann_{\mathcal H\left(G, \ L, \ \mathcal A\right)}(v),  \ \ V\simeq \mathcal H\left(G, \ L, \ \mathcal A\right)/I\\
&   I_i\overset{def}{=} \Ann_{\mathcal H\left(G, \ L, \ \mathcal B\right)}(w_i),  \ \ W_i\simeq \mathcal H\left(G, \ L, \ \mathcal B\right)/I_i, \ \ 1\le i\le t.\\
\end{align*}

\begin{Rem}\label{aaeir-4}
  In what follows we use repeatedly the following elementary observation. Let $X$ and $Y$ be non--zero vector spaces over the field $\mathcal A$.
  Let $Z\subset X$, $Z\neq 0$,  be a subspace. Then, if $\sum_{i=1}^l x_i\otimes y_i\in Z\otimes_{\mathcal A} Y$, with  $\mathcal A$--linearly independent vectors $y_1, \ldots, y_l$,
  then $x_1,\ldots, x_l\in Z$. Indeed, if $\alpha$ is $\mathcal A$--linear functional, then there exists an $\mathcal A$--linear map $X\otimes_{\mathcal A} Y\longrightarrow X$
  such that $x\otimes y\longmapsto \alpha(y)x$. It maps  $Z\otimes_{\mathcal A} Y$ into $Z$.
  Now, since  $y_1, \ldots, y_l$ are $\mathcal A$--linearly independent, there exists linear functionals $\alpha_1, \ldots, \alpha_t$ on $Y$ such that
  $\alpha_i(y_j)=\delta_{ij}$ (a Kronecker delta). Consequently, $\alpha_k\left(\sum_{i=1}^l x_i\otimes y_i\right)=x_k\in Z$.
\end{Rem}

\vskip .2in 
\begin{Lem}\label{aaeir-5}
  $\Ann_{\mathcal H\left(G, \ L, \ \mathcal A\right)}(1\otimes v)= \mathcal B\otimes_{\mathcal A} I=I_1\cap I_2\cap \cdots \cap I_t$.
\end{Lem}
\begin{proof} $\Ann_{\mathcal H\left(G, \ L, \ \mathcal A\right)}(1\otimes v)=I_1\cap I_2\cap \cdots \cap I_t$ is obvious from (\ref{aaeir-3}) and (\ref{aaeir-30}). Also,
  $ \mathcal B\otimes_{\mathcal A} I\subset \Ann_{\mathcal H\left(G, \ L, \ \mathcal A\right)}(1\otimes v)$ is obvious. The converse inclusion follows from elementary
  Remark \ref{aaeir-4}.
  \end{proof}

Now, following Theorem \ref{eir-9}, we construct maximal left ideals  

\begin{align*}
  & J\overset{def}{=}\sum_{\substack{J'\subset \mathcal H\left(G, \ \mathcal A\right)  \epsilon_{L, \mathcal A} \ \text{a left ideal} \\
      \epsilon_{L, \mathcal A} J'=I}} \ J'  \subset \mathcal H\left(G,  \ \mathcal A\right) \epsilon_{L, \mathcal A}  \\
  & \\
  &  J_i\overset{def}{=}\sum_{\substack{J'\subset \mathcal H\left(G,\ \mathcal B\right)  \epsilon_{L, \mathcal B} \ \text{a left ideal} \\
      \epsilon_{L, \mathcal B} J'=I_i}} \ J' \subset \mathcal H\left(G,  \ \mathcal B\right) \epsilon_{L, \mathcal B}, \ \ 1\le i\le t.   \\
  \end{align*}

Then, we have (see Theorem \ref{eir-9} (iv))
$$
V\simeq  \calV_{\mathcal A}(I, L)\overset{def}{=}\mathcal H\left(G, \  \mathcal A\right)/J.
$$
Consequently, since $\mathcal B$ is a field, we have

\begin{equation}\label{aaeir-6}
  \mathcal B\otimes_{\mathcal A} V\simeq \mathcal H\left(G, \ \mathcal B\right)/\mathcal B\otimes_{\mathcal A} J 
\end{equation}

We also define irreducible $(\mathcal B, \ G)$--modules using  (Theorem \ref{eir-9} (iv))
$$
V_i\overset{def}{=}\mathcal H\left(G, \  \mathcal B\right)/J_i, \ \ 1\le i\le t. 
$$
By   Theorem \ref{eir-9} (iv), we have
$$
V_i^L=  \mathcal H\left(G, \  \mathcal B\right)/I_i\simeq W_i 
$$
as $\mathcal H\left(G, \ L, \  \mathcal B\right)$--modules for all $1\le i\le t$. Thus, $V_1, V_2, \ldots, V_t$ satisfies (i) and (ii) of the theorem. It remains to prove (iii). 
We  need the following lemma:

\vskip .2in 
\begin{Lem}\label{aaeir-7}
   $\mathcal B\otimes_{\mathcal A} J=J_1\cap J_2\cap \cdots \cap J_t$.
\end{Lem}
\begin{proof} We prove $\mathcal B\otimes_{\mathcal A} J\subset J_i$ for all $i=1, \ldots, t$. Indeed, let $J'\subset \mathcal H\left(G, \ L, \ \mathcal A\right)  \epsilon_{L, \mathcal A} $
  be a left ideal such that $\epsilon_{L, \mathcal A} J'=I$. Then, we define a left ideal in $ \mathcal H\left(G, \ L, \ \mathcal B\right)  \epsilon_{L, \mathcal B} $ as follows:
  $$
  J{''}_i\overset{def}{=}\mathcal H\left(G, \  \mathcal B\right) \star I_i+  \mathcal B\otimes_{\mathcal A} J'.
  $$
  Then, applying Lemma \ref{aaeir-5}, we obtain 
  $$
  \epsilon_{L, \mathcal B}J{''}_i=  \epsilon_{L, \mathcal B} \left(\mathcal H\left(G, \  \mathcal B\right) \star I_i+  \mathcal B\otimes_{\mathcal A} J'\right)=
  I_i+  \mathcal B\otimes_{\mathcal A}  \epsilon_{L, \mathcal A}J' = I_i +   \mathcal B\otimes_{\mathcal A}  I= I_i, 
  $$
  for all $1\le i\le t$.  Consequently, we have
  $$
  \mathcal B\otimes_{\mathcal A} J'\subset J{''}\subset J_i, \ \ 1\le i\le t.
  $$
  Since $J'$ is arbitrary, we obtain
  $$
  \mathcal B\otimes_{\mathcal A} J\subset J_i, \ \ 1\le i\le t.
  $$
  This proves
  $$
  \mathcal B\otimes_{\mathcal A} J\subset J_1\cap J_2\cap \cdots \cap J_t.
  $$

  Conversely, let $f\in J_1\cap J_2\cap \cdots \cap J_t$. Then, we define a left ideal
  $$
  J{''}\overset{def}{=}  \mathcal H\left(G,  \ \mathcal B\right) f\subset J_1\cap J_2\cap \cdots \cap J_t.
  $$
  Then, for each $i$, we have
  $$
  \epsilon_{L, \mathcal B} J{''}\subset I_i,
  $$
  by the definition of ideals $J_i$ and an argument as above with $J_i$. Hence, by   Lemma \ref{aaeir-5}, we obtain
  
  \begin{equation}\label{aaeir-8}
  \epsilon_{L, \mathcal B} J{''}\subset \mathcal B\otimes_{\mathcal A} I.
  \end{equation}

  Now, we write
  $$
  f=\sum_{i=1}^l b_i\otimes f_i, \ \  f_i\in  \mathcal H\left(G,  \ \mathcal A\right), b_i \in \mathcal B,
  $$
  with $b_1, \ldots, b_l$ are $\mathcal A$---linearly independent. 
  Then, (\ref{aaeir-8}) implies that
 $$
  \sum_{i=1}^l b_i\otimes \epsilon_{L, \mathcal B}  F\star f_i \in  \mathcal B\otimes_{\mathcal A} I,
  $$
  for any $F\in \mathcal  \mathcal H\left(G,  \ \mathcal A\right)$. Applying now Remark \ref{aaeir-4} we obtain
  $$
  \epsilon_{L, \mathcal B}  F\star f_i \in I,
  $$
  for all  $F\in \mathcal  \mathcal H\left(G,  \ \mathcal A\right)$ and all $i$. This implies that
  $$
  f_i\in  \mathcal H\left(G,  \ \mathcal A\right)f_i \subset J,
  $$
  for all $i$. Consequently, we obtain
  that
  $$
  f= \sum_{i=1}^l b_i\otimes f_i\in \mathcal B\otimes_{\mathcal A} J.
  $$
  This proves that
  $$
  J_1\cap J_2\cap \cdots \cap J_t\subset  \mathcal B\otimes_{\mathcal A} J.
  $$
  The proof of lemma is complete.
\end{proof}

\vskip .2in

Now, we are ready to prove (iii) in the theorem, and thus complete the proof of the theorem. By (\ref{aaeir-6}) and Lemma \ref{aaeir-7}, we have the following inclusion of
$(\mathcal B, \ G)$--modules:

$$
\mathcal B\otimes_{\mathcal A} V\hookrightarrow V_1\oplus V_2 \oplus \cdots \oplus V_t.
$$
But the map is surjective since the map is surjective on level of $L$--invariants by counting $\mathcal A$--dimensions (see (\ref{aaeir-3}))  which implies the following:

$$
\mathcal B\otimes_{\mathcal A} V = \mathcal H\left(G, \ \mathcal B\right) \left(\mathcal B\otimes_{\mathcal A} V^L\right)=
\sum_{i=1}^t \mathcal H\left(G, \ \mathcal B\right)W_i= \oplus_{i=1}^t V_i.
$$
This completes the proof of the theorem. 

\end{proof}

\section{Applications and Improvements of Theorem \ref{aaeir-1}}\label{ebf}

We start this section with the following application of Theorem \ref{aaeir-1}:

\begin{Cor}\label{ebf-0} Assume that $\mathcal A$ is any subfield of $\mathbb C$.
  Let $G$ be a reductive $p$--adic group (i.e.,  a group of $k$--points of
  a reductive group over a local non--Archimedean field $k$). Let $L\subset G$ be an open compact subgroup.
  Let $V$ be an irreducible $(\mathcal A, \ G)$--module such that $V^L\neq 0$ and  $\mathcal A$--finite dimensional. Then, $V$ is $\mathcal A$--admissible (see Definition \ref{agm-2}).
\end{Cor}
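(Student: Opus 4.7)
The plan is to reduce to the well-known complex admissibility result by base-changing along the inclusion $\mathcal A \subset \mathbb C$ and invoking Theorem \ref{aaeir-1}. The hypotheses of that theorem are satisfied: $\mathcal A \subset \mathbb C$ is a field extension (both are extensions of $\mathbb Q$), and by assumption $V$ is an irreducible $(\mathcal A, G)$--module with $V^L \neq 0$ finite dimensional over $\mathcal A$. So the theorem produces irreducible $(\mathbb C, G)$--modules $V_1, \ldots, V_t$, each with $V_i^L \neq 0$ finite dimensional over $\mathbb C$, and a decomposition
\[
V_{\mathbb C} \overset{def}{=} \mathbb C \otimes_{\mathcal A} V \simeq V_1 \oplus \cdots \oplus V_t
\]
as $(\mathbb C, G)$--modules.

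Next I would invoke Jacquet's theorem (\cite{renard}, Theorem VI.2.2), which was explicitly recalled right before the statement of the corollary: since $G$ is a reductive $p$--adic group, every irreducible $(\mathbb C, G)$--module is automatically $\mathbb C$--admissible. Applying this to each $V_i$ and taking the finite direct sum, $V_{\mathbb C}$ is $\mathbb C$--admissible, i.e.\ $V_{\mathbb C}^{L'}$ is finite dimensional over $\mathbb C$ for every open compact subgroup $L' \subset G$.

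To finish, fix any open compact $L' \subset G$. Lemma \ref{agm-7}(i) identifies
\[
V_{\mathbb C}^{L'} \;=\; \mathbb C \otimes_{\mathcal A} V^{L'},
\]
so $\dim_{\mathcal A} V^{L'} = \dim_{\mathbb C} V_{\mathbb C}^{L'} < \infty$. Since $\mathcal A$ is a field, finite dimensional is the same as finitely generated, which is precisely the condition of Definition \ref{agm-2}. Hence $V$ is $\mathcal A$--admissible.

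There is really no serious obstacle here: Theorem \ref{aaeir-1} has already done the heavy lifting of transferring irreducibility under base change while preserving the nonvanishing and finite dimensionality of the $L$--invariants, and Jacquet's theorem handles the complex case. The only points that require any care are matching the hypotheses of Theorem \ref{aaeir-1} to the input of the corollary (which is immediate), and making sure that $\mathcal A$--admissibility for all open compact $L' \subset G$ can be read off from $\mathbb C$--admissibility of $V_{\mathbb C}$ (which is exactly what Lemma \ref{agm-7}(i) supplies).
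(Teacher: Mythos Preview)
Your proof is correct and follows essentially the same approach as the paper: apply Theorem \ref{aaeir-1} with $\mathcal B=\mathbb C$, invoke Jacquet's theorem to get $\mathbb C$--admissibility of each summand and hence of $V_{\mathbb C}$, then use Lemma \ref{agm-7}(i) to descend finite dimensionality of $L'$--invariants back to $\mathcal A$. The only cosmetic difference is that you spell out the ``finite dimensional $=$ finitely generated over a field'' step, which the paper leaves implicit.
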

\begin{proof} We may assume that $\mathcal A\subset \mathbb C$. Then, in Theorem \ref{aaeir-1} we select $\mathcal B=\mathbb C$.
  Then all $W_i$, $1\le i\le t$, are irreducible smooth complex representations
  of a reductive $p$--adic group $G$. Then, by a result of Jacquet (\cite{renard}, Theorem VI.2.2), every representation $W_i$ is $\mathcal C$--admissible. This implies that
  $\mathbb C\otimes_{\mathcal A} V$ is. Hence, for every open compact subgroup $L_0\subset G$, the complex vector space
  $\left(\mathbb C\otimes_{\mathcal A} V\right)^{L_0}$ is finite dimensional. But, by Lemma \ref{agm-7} (i), we have
  $$
  \mathbb C\otimes_{\mathcal A} V^{L_0} \simeq \left(\mathbb C\otimes_{\mathcal A} V\right)^{L_0}.
  $$
  But then
  $$
  \dim_{\mathcal A}  V^{L_0} = \dim_{\mathbb C}  \left(\mathbb C\otimes_{\mathcal A} V\right)^{L_0},
  $$
  proving the corollary.
  \end{proof}

\vskip .2in

\vskip .2in
The following is analogue of the result for finite dimensional representations of associative algebras  (see \cite{currei}, Section 29).

\begin{Cor}\label{ebf-00}  Assume that $\mathcal A$ is a field  (and then extension of $\mathbb Q$). Let $G$ be an $l$--group and $L\subset G$ an open compact subgroup.
  Assume that  $V$ and $U$ are irreducible $(\mathcal A, \ G)$--modules such that $V^L\neq 0$, $U^L\neq 0$,  and  both
  $\mathcal A$--admissible,  for any field extension $\mathcal A\subset \mathcal B$, if $V_{\mathcal B} $ and $U_{\mathcal B}$ have disjoint Jordan--H\" older series, then
  $V\simeq U$ as $(\mathcal A, \ G)$--modules.
\end{Cor}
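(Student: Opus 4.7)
As literally printed the implication looks reversed (a disjoint Jordan-Hölder series over an extension is exactly what one expects when $V\not\simeq U$), so I read the corollary as the classical Curtis-Reiner criterion: $V\simeq U$ iff $V_{\mathcal B}$ and $U_{\mathcal B}$ share a Jordan-Hölder constituent for some (equivalently every) field extension $\mathcal A\subset\mathcal B$. The substantive content is then the contrapositive of what is printed, namely \emph{$V\not\simeq U$ forces $V_{\mathcal B}$ and $U_{\mathcal B}$ to have disjoint Jordan-Hölder series for every $\mathcal B$}, and this is what I will prove.

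First I reduce to the Hecke-algebra side. By Lemma \ref{eir-6}(ii), $V\simeq U$ iff $V^L\simeq U^L$ as $\mathcal H(G,L,\mathcal A)$-modules, and Theorem \ref{aaeir-1} decomposes $V_{\mathcal B}\simeq V_1\oplus\cdots\oplus V_t$ and $U_{\mathcal B}\simeq U_1\oplus\cdots\oplus U_s$ into irreducible $(\mathcal B,G)$-modules, so the Jordan-Hölder constituents of $V_{\mathcal B}$ and $U_{\mathcal B}$ are precisely these summands. Applying Lemma \ref{eir-6}(ii) over $\mathcal B$, a shared constituent $V_i\simeq U_j$ is equivalent to the existence of an $\mathcal H(G,L,\mathcal B)$-isomorphism $V_i^L\simeq U_j^L$, each appearing as a simple constituent of $\mathcal B\otimes_{\mathcal A}V^L$ respectively $\mathcal B\otimes_{\mathcal A}U^L$ (Lemma \ref{aaeir-2}(iii) together with Lemma \ref{agm-7}(i)). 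So it suffices to show that if $V^L\not\simeq U^L$, then $\mathcal B\otimes_{\mathcal A}V^L$ and $\mathcal B\otimes_{\mathcal A}U^L$ share no simple $\mathcal H(G,L,\mathcal B)$-constituent.

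For the final step, assume $V^L\not\simeq U^L$. By Lemma \ref{aaeir-2}(i) the image $S_V$ of $\mathcal H(G,L,\mathcal A)$ in $\End_{\mathcal A}(V^L)$ is a simple $\mathcal A$-algebra with unique simple module $V^L$, and similarly for $S_U$ and $U^L$. Since $V^L,U^L$ are non-isomorphic finite-dimensional simples, Jacobson density applied to $V^L\oplus U^L$ (Lang, Ch.\ XVII) identifies the image of $\mathcal H(G,L,\mathcal A)$ in $\End_{\mathcal A}(V^L\oplus U^L)$ with the full product $S_V\times S_U$; choose pre-images $h_V,h_U\in\mathcal H(G,L,\mathcal A)$ of the central idempotents $(1,0),(0,1)$. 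Tensoring with $\mathcal B$, $h_V$ acts as the identity on $\mathcal B\otimes_{\mathcal A}V^L$ and as zero on $\mathcal B\otimes_{\mathcal A}U^L$, and symmetrically for $h_U$; therefore every simple $\mathcal H(G,L,\mathcal B)$-constituent of the former satisfies $h_V=1$ while every simple constituent of the latter satisfies $h_V=0$, so no module can appear on both sides. The only real hurdle is parsing the statement: once the contrapositive formulation is adopted, everything reduces to the Wedderburn-style density theorem of Lemma \ref{aaeir-2} combined with the elementary fact that central idempotents separating two non-isomorphic simples survive scalar extension and continue to separate the simple constituents of the base-changed modules.
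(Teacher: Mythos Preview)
Your reading of the statement is the correct one (the paper's own proof confirms this: it assumes a common irreducible factor and deduces $V\simeq U$), and your argument is sound.

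That said, your route differs from the paper's. The paper argues the positive implication directly: from a common irreducible constituent of $V_{\mathcal B}^L$ and $U_{\mathcal B}^L$ it concludes
\[
\Hom_{\mathcal H(G,L,\mathcal B)}\bigl(V_{\mathcal B}^L,\ U_{\mathcal B}^L\bigr)\neq 0,
\]
and then invokes the Curtis--Reiner scalar-extension formula
\[
\Hom_{\mathcal H(G,L,\mathcal B)}\bigl(V_{\mathcal B}^L,\ U_{\mathcal B}^L\bigr)\ \simeq\ \mathcal B\otimes_{\mathcal A}\Hom_{\mathcal H(G,L,\mathcal A)}\bigl(V^L,\ U^L\bigr)
\]
to get a nonzero $\mathcal A$-Hom, hence $V^L\simeq U^L$ by Schur and then $V\simeq U$ by Lemma~\ref{eir-6}(ii). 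You instead prove the contrapositive by a density argument: using that the image of $\mathcal H(G,L,\mathcal A)$ in $\End_{\mathcal A}(V^L\oplus U^L)$ is the full product $S_V\times S_U$, you manufacture separating idempotents that survive base change. This avoids citing the $\Hom$--tensor compatibility as a black box and is arguably more self-contained, while the paper's proof is shorter but leans on the Curtis--Reiner reference. Both approaches ultimately rest on the same Wedderburn/density theory underlying Lemma~\ref{aaeir-2}.
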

\begin{proof} By Theorem \ref{aaeir-1} (iii), both representations  $V_{\mathcal B} $ and $U_{\mathcal B}$ are semisimple and have finite length. By  Theorem \ref{aaeir-1} (i) and (ii),
  every irreducible composition factor has non--zero and $\mathcal A$--finite dimensional space of $L$--invariants. Consequently, both
  $V^L_{\mathcal B} $ and $U^L_{\mathcal B}$ are semi--simple. Thus, if they have common irreducible factor,
  then
  $$
  \Hom_{\mathcal H\left(G, \ L, \ \mathcal B\right)}(V^L_{\mathcal B}, \  U^L_{\mathcal B})\neq 0.
  $$
  But by the  results that can be found in (\cite{currei}, Section 29):
  $$
  \Hom_{\mathcal H\left(G, \ L, \ \mathcal B\right)}(V^L_{\mathcal B}, \  U^L_{\mathcal B})  \simeq \mathcal B\otimes_{\mathcal A}\Hom_{\mathcal H\left(G, \ L, \ \mathcal A\right)}(V^L, \ U^L).
  $$
  Thus, we obtain
  $$
  \Hom_{\mathcal H\left(G, \ L, \ \mathcal A\right)}(V^L, \ U^L)\neq 0.
  $$
Then, Lemma \ref{eir-6} (ii) implies that $V\simeq U$.  
  \end{proof}

\vskip .2in
Another application of Theorem \ref{aaeir-1}  is the following corollary: 

\begin{Cor}\label{eib-99} Assume that $\mathcal A$ is a field  (and then extension of $\mathbb Q$). Let $G$ be an $l$--group and $L\subset G$ an open compact subgroup.
  Let $V$ be an irreducible $(\mathcal A, \ G)$--module such that $V^L\neq 0$ and  $\mathcal A$--admissible.  Then, $V$ is absolutely irreducible (i.e.,
  $V_{\mathcal B}$ is irreducible for all field extensions $\mathcal A\subset \mathcal B$, see  \cite{cas} and \cite{robert}) if and only if
  $\End_{(\mathcal A, \ G)}\left(V\right)=\mathcal A$.
\end{Cor}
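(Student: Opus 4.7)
The plan is to translate the statement into the claim that the finite-dimensional $\mathcal A$-division algebra
$$
\mathcal D \overset{def}{=} \End_{\mathcal H\left(G, \ L, \ \mathcal A\right)}\left(V^L\right)
$$
equals $\mathcal A$. By Theorem \ref{aeir-1}, the restriction map induces an isomorphism $\End_{(\mathcal A, \ G)}(V) \simeq \mathcal D$, and $\mathcal D$ is finite-dimensional over $\mathcal A$ since $V^L$ is. Thus the condition $\End_{(\mathcal A, \ G)}(V) = \mathcal A$ is equivalent to $\mathcal D = \mathcal A$, and the whole proof reduces to proving that $V$ is absolutely irreducible if and only if $\mathcal D = \mathcal A$.

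For the direction $(\Rightarrow)$, I would take $\mathcal B$ to be an algebraic closure of $\mathcal A$. The hypothesis gives that $V_{\mathcal B}$ is irreducible, so by Lemma \ref{agm-7}(i) and Lemma \ref{eir-6}(i), $V^L_{\mathcal B} = \mathcal B \otimes_{\mathcal A} V^L$ is a non-zero, finite-dimensional, irreducible $\mathcal H(G, \ L, \ \mathcal B)$-module. Schur's lemma over the algebraically closed field $\mathcal B$ then gives $\End_{\mathcal H(G, \ L, \ \mathcal B)}(V^L_{\mathcal B}) = \mathcal B$. On the other hand, the action of $\mathcal D$ on $V^L$ extends by scalars to an injection $\mathcal B \otimes_{\mathcal A} \mathcal D \hookrightarrow \End_{\mathcal B}(V^L_{\mathcal B})$, injective because $\mathcal B / \mathcal A$ is flat and $\mathcal D$ acts faithfully on $V^L$, whose image lies in $\End_{\mathcal H(G, \ L, \ \mathcal B)}(V^L_{\mathcal B})$ because $\mathcal H(G, \ L, \ \mathcal B) = \mathcal B \otimes_{\mathcal A} \mathcal H(G, \ L, \ \mathcal A)$ commutes with $\mathcal D$. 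Therefore $\mathcal B \otimes_{\mathcal A} \mathcal D \hookrightarrow \mathcal B$, which forces $\dim_{\mathcal A} \mathcal D = 1$, hence $\mathcal D = \mathcal A$.

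For the direction $(\Leftarrow)$, suppose $\mathcal D = \mathcal A$. Then Lemma \ref{aaeir-2}(ii) gives $\mathcal H_{\mathcal A, V^L} = \End_{\mathcal D}(V^L) = \End_{\mathcal A}(V^L)$, and for any field extension $\mathcal A \subset \mathcal B$, Lemma \ref{aaeir-2}(iii) yields
$$
\mathcal H_{\mathcal B, V^L_{\mathcal B}} = \mathcal B \otimes_{\mathcal A} \End_{\mathcal A}(V^L) = \End_{\mathcal B}(V^L_{\mathcal B}).
$$
Since the image of $\mathcal H(G, \ L, \ \mathcal B)$ on $V^L_{\mathcal B}$ is the full $\mathcal B$-endomorphism ring, $V^L_{\mathcal B}$ is irreducible as an $\mathcal H(G, \ L, \ \mathcal B)$-module. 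Now Theorem \ref{aaeir-1} decomposes $V_{\mathcal B} \simeq V_1 \oplus \cdots \oplus V_t$ with each $V_i$ irreducible and $V_i^L \neq 0$; taking $L$-invariants gives $V^L_{\mathcal B} = \bigoplus_i V_i^L$, and the irreducibility of $V^L_{\mathcal B}$ forces $t = 1$. Hence $V_{\mathcal B}$ is irreducible, as required.

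The main technical point is the compatibility of endomorphism rings and image algebras with base change, which works out cleanly only because $V^L$ is finite-dimensional over $\mathcal A$; this finiteness is what allows Lemma \ref{aaeir-2} to be applied to give the identity $\mathcal H_{\mathcal B, V^L_{\mathcal B}} = \mathcal B \otimes_{\mathcal A} \mathcal H_{\mathcal A, V^L}$, and what turns Schur's lemma over the algebraic closure into an effective dimension bound in the forward direction.
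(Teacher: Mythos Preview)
Your proof is correct and follows the same overall structure as the paper's: reduce to $V^L$ via Theorem~\ref{aeir-1}, and for the $(\Leftarrow)$ direction use Lemma~\ref{aaeir-2} together with Theorem~\ref{aaeir-1} exactly as the paper does. The only difference is in the $(\Rightarrow)$ direction: the paper invokes a black-box result from Curtis--Reiner (recorded as Lemma~\ref{ebf-4}) that a finite-dimensional module is absolutely irreducible if and only if its endomorphism ring is the ground field, whereas you supply a direct argument via Schur's lemma over the algebraic closure and the injection $\mathcal B\otimes_{\mathcal A}\mathcal D\hookrightarrow \End_{\mathcal H(G,L,\mathcal B)}(V^L_{\mathcal B})=\mathcal B$ --- this is in effect a self-contained proof of that cited lemma.
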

\begin{proof} Assume that $\End_{(\mathcal A, \ G)}\left(V\right)=\mathcal A$. Then, using the notation of Lemma \ref{aaeir-2},  $\mathcal H_{\mathcal A, W}= \End_{\mathcal A}(W)$, where $W=V^L$. Thus, if
$\mathcal A\subset \mathcal B$ is a filed extension,using Lemma \ref{aaeir-2} (ii),  we obtain   
$$
\mathcal H_{\mathcal B, W_{\mathcal B}}= \mathcal B\otimes_{\mathcal A} \End_{\mathcal A}(W)= \End_{\mathcal B}(W_{\mathcal B}).
$$
This implies that $W_{\mathcal B}$ is irreducible $\mathcal H\left(G, \ L, \ \mathcal B\right)$--module. Applying Theorem \ref{aaeir-1} we conclude that $V_{\mathcal B}$ is irreducible.

Conversely, assume that $V$ is absolutely irreducible.  Then obviously $W=V^L$  is absolutely irreducible $\mathcal A$--admissible $\mathcal H\left(G, \ L, \ \mathcal A\right)$--module
(see Lemma \ref{eir-6}). Now, we apply the following lemma (\cite{currei}, Section 29):

 \begin{Lem}\label{ebf-4}
   Assume that $U$ is an irreducible $\mathcal A$--admissible $\mathcal H\left(G, \ L, \ \mathcal A\right)$--module. Then, $U$ is absolutely irreducible if and only if
   $\End_{\mathcal H\left(G, \ L, \ \mathcal A\right)}(U)=\mathcal A$.
    \end{Lem}
Finally, Theorem \ref{aeir-1} completes the proof. 
\end{proof}

\vskip .2in
We remark that ff $V$ is absolutely irreducible, then  $V_{\mathcal B}$ is also absolutely irreducible module. One needs to apply Corollary \ref{eib-99} and the observation:
$$
V_{\mathcal C}=\mathcal C \otimes_{\mathcal A} V\simeq \mathcal C \otimes_{\mathcal B} \left(\mathcal B \otimes_{\mathcal A} V \right)\simeq \mathcal C \otimes_{\mathcal B} V_{\mathcal B},
$$
for field extensions $\mathcal A\subset \mathcal B\subset \mathcal C$.

\vskip .2in
Finally, we give an improvement of Theorem \ref{aaeir-1}. 

\begin{Cor}\label{eib-100} Assume that $\mathcal A$ is a field  (and then extension of $\mathbb Q$). Let $G$ be an $l$--group and $L\subset G$ an open compact subgroup.
  Let $V$ be an irreducible $(\mathcal A, \ G)$--module such that $V^L\neq 0$ and  $\mathcal A$--admissible. Then, we can extend $V$ to an obvious $(\mathcal K, \ G)$--module, say $V^{ext}$, where
  $\mathcal K$ is the center of the division algebra $\End_{(\mathcal A, \ G)} (V)$. Then,
for any field extension $\mathcal K\subset \mathcal B$, there exists  a unique irreducible $(\mathcal B, \ G)$--module $V(\mathcal B)$ such that 
the following holds:
  \begin{itemize}
  \item[(i)] $V^{ext}(\mathcal B)^L\neq 0$. 
  \item[(ii)] $V^{ext}(\mathcal B)^L$ is $\mathcal B$--admissible  irreducible $\mathcal H\left(G, \ L, \ \mathcal B\right)$--module.
  \item[(iii)] $V^{ext}_{\mathcal B}\overset{def}{=} \mathcal B \otimes_{\mathcal A} V^{ext}$ is direct sum of finite number of copies of $V^{ext}(\mathcal B)$.
  \end{itemize}
  In addition, if $\mathcal F$ is a maximal subfield of $\mathcal D$ (which must contain $\mathcal K$)  then $V^{ext}(\mathcal F)$ is absolutely irreducible. 
\end{Cor}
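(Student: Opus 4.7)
The plan is to replace the base field $\mathcal A$ by the center $\mathcal K$ of the division algebra $\mathcal D = \End_{(\mathcal A, G)}(V)$, so that the image of the Hecke algebra becomes a \emph{central} simple algebra; Theorem \ref{aaeir-1} together with the fact that base change preserves central simplicity will then do the rest.

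First I would promote the action to $\mathcal K$. Since $\mathcal K$ sits in the center of $\mathcal D$, it acts on $V$ by $G$--equivariant endomorphisms extending the $\mathcal A$--action, which gives $V^{ext}$ as a $(\mathcal K, \ G)$--module. Any $(\mathcal K, \ G)$--submodule is in particular an $(\mathcal A, \ G)$--submodule, so $V^{ext}$ stays irreducible; also $\mathcal D \hookrightarrow \End_{\mathcal A}(V^L)$ by Theorem \ref{aeir-1} shows $[\mathcal K : \mathcal A] < \infty$, so $V^{ext, L} = V^L$ is $\mathcal K$--finite--dimensional and $V^{ext}$ is $\mathcal K$--admissible. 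Every element of $\mathcal D$ commutes with $\mathcal K$, so $\End_{(\mathcal K, \ G)}(V^{ext}) = \mathcal D$; and Theorem \ref{aeir-1} then gives $\End_{\mathcal H(G, \ L, \ \mathcal K)}(W) = \mathcal D$, where $W := V^{ext, L}$.

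Next, Lemma \ref{aaeir-2}(ii) applied over $\mathcal K$ identifies $\mathcal H_{\mathcal K, W} = \End_{\mathcal D}(W)$. Since $\mathcal D$ is central simple over $\mathcal K$ and $W$ is a finite--dimensional faithful $\mathcal D$--module, this endomorphism ring is itself central simple over $\mathcal K$ (Lang, Chapter XVII). Hence for any field extension $\mathcal K \subset \mathcal B$, the base change $\mathcal B \otimes_{\mathcal K} \mathcal H_{\mathcal K, W}$ is central simple over $\mathcal B$, so by Wedderburn it is a matrix algebra over a single division $\mathcal B$--algebra and admits a unique simple module $W(\mathcal B)$ up to isomorphism; in particular $W_{\mathcal B} := \mathcal B \otimes_{\mathcal K} W$ is a direct sum of finitely many copies of $W(\mathcal B)$. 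Applying Theorem \ref{aaeir-1} to $V^{ext}$ and the extension $\mathcal K \subset \mathcal B$ yields $V^{ext}_{\mathcal B} \simeq V_1 \oplus \cdots \oplus V_t$ with each $V_i^L$ a non--zero $\mathcal B$--admissible irreducible $\mathcal H(G, \ L, \ \mathcal B)$--submodule of $W_{\mathcal B}$, forcing $V_i^L \simeq W(\mathcal B)$ for all $i$. Lemma \ref{eir-6}(ii) then makes all $V_i$ isomorphic to one common module $V^{ext}(\mathcal B)$, giving existence, uniqueness, and (i)--(iii).

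For the final claim, when $\mathcal F$ is a maximal subfield of $\mathcal D$ the classical splitting theorem yields $\mathcal F \otimes_{\mathcal K} \mathcal D \cong M_n(\mathcal F)$, so $\mathcal F \otimes_{\mathcal K} \mathcal H_{\mathcal K, W}$ is a matrix algebra over $\mathcal F$. Consequently $\End_{\mathcal H(G, \ L, \ \mathcal F)}(W(\mathcal F)) = \mathcal F$, which Theorem \ref{aeir-1} transfers to $\End_{(\mathcal F, \ G)}(V^{ext}(\mathcal F)) = \mathcal F$, after which Corollary \ref{eib-99} delivers absolute irreducibility. The delicate point, I expect, is the initial passage from $\mathcal A$ to $\mathcal K$---verifying that $V^{ext}$ remains irreducible and admissible and that its endomorphism ring is unchanged---after which the central simple algebra machinery and Theorem \ref{aaeir-1} do the rest essentially mechanically.
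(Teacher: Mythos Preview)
Your proposal is correct and follows essentially the same route as the paper: promote $V$ to a $(\mathcal K,\ G)$--module, observe that the image of the Hecke algebra $\mathcal H_{\mathcal K,W}=\End_{\mathcal D}(W)$ is now \emph{central} simple over $\mathcal K$, use that central simplicity is preserved under base change to obtain a unique simple summand over any $\mathcal B\supset\mathcal K$, feed this into Theorem~\ref{aaeir-1} and Lemma~\ref{eir-6}(ii), and finish with the maximal--subfield splitting $\mathcal F\otimes_{\mathcal K}\mathcal D\cong M_n(\mathcal F)$ combined with Theorem~\ref{aeir-1} and Corollary~\ref{eib-99}.

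The only tactical difference worth noting is in the final step. The paper computes the endomorphism ring of the \emph{whole} module $\left(V^{ext}_{\mathcal F}\right)^L$ via the base--change formula $\End_{\mathcal H(G,\,L,\,\mathcal F)}\!\left(W_{\mathcal F}\right)\simeq \mathcal F\otimes_{\mathcal K}\mathcal D$ (Curtis--Reiner, \S29), recognizes this as a matrix algebra over $\mathcal F$, and then reads off that the endomorphism ring of a single summand must be $\mathcal F$. You instead observe directly that $\mathcal F$ splits $\mathcal D$ hence also its Brauer--inverse $\mathcal H_{\mathcal K,W}\cong M_r(\mathcal D^{op})$, so $\mathcal H_{\mathcal F,W_{\mathcal F}}$ is already a full matrix algebra over $\mathcal F$ and its unique simple module has scalar endomorphisms. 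Your route is a touch more direct; the paper's route has the small bonus of identifying the multiplicity $t$ explicitly as $[\mathcal F:\mathcal K]$. Either way the argument is the same in substance.
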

\begin{proof}  This follows from Theorem \ref{aaeir-1} and Corollary \ref{eib-99} but we need some preparation.
  We warn the reader that we use notation from the first part of the proof of Theorem  \ref{aaeir-1}
  freely (see Lemma \ref{aaeir-2}).  As in the proof of Theorem \ref{aaeir-1}, we write $W=V^L$.  Applying Theorem \ref{aeir-1}, we obtain (see (\ref{aaeir-200}))
the following isomorphism of $\mathcal A$--algebras (see (\ref{aaeir-200})):
  $$
  \End_{(\mathcal A, \ G)} (V)\simeq \mathcal D.
  $$
In particular, $\mathcal K$ is the center of $D$. We let
$$
W^{ext} = \left(V^{ext}\right)^L.
$$
Moreover, we have the following isomorphism: 
  $$
  \End_{\mathcal H\left(G, \ L, \ \mathcal K\right)}\left(W^{ext}\right)=\End_{\mathcal H\left(G, \ L, \ \mathcal A\right)}(W)=\mathcal D.
  $$

In difference to what we have in the proof of Theorem \ref{aaeir-1} (see the statement of Lemma \ref{aaeir-2}), the simple algebra
$$
\mathcal H_{\mathcal K, W}= \End_{\mathcal D}(W^{ext})
$$
has center of $\mathcal K$. Thus, by (see \cite{currei}, Section 68), 
$$
\mathcal H_{\mathcal B, W^{ext}_{\mathcal B}}= \mathcal B\otimes_{\mathcal K} \End_{\mathcal D}(W^{ext})
$$
is simple $\mathcal B$--algebra. This observation is responsable for the existence of unique $V(\mathcal B)$.
We leave the details to the reader.

It remains to prove that $V^{ext}(\mathcal F)$ is absolutely irreducible.  We need the following lemma:

\begin{Lem}\label{ebf-2}
 $\mathcal F$--algebra $\mathcal F \otimes_{ \mathcal K}   \mathcal D$ is isomorphic to the  $\mathcal F$--algebra of all
  matrices of size $t\times t$ with coefficients in $\mathcal F$ where $t=\dim_{\mathcal K} \mathcal D$. 
    \end{Lem}
\begin{proof} This is a part of standard theory of simple algebras (see \cite{currei}, Section 68). 
 \end{proof}

 As in the proof of Corollary \ref{ebf-00},  by the results of \cite{currei}, Section 29), we have
$$
\End_{\mathcal H\left(G, \ L, \ \mathcal F\right)}\left(\left(V^{ext}_{\mathcal F}\right)^L\right)\simeq \mathcal F\otimes_{\mathcal K}
\End_{\mathcal H\left(G, \ L, \ \mathcal K\right)}\left(\left(V^{ext}\right)^L\right)=\mathcal F \otimes_{ \mathcal K}   \mathcal D.
$$
Thus, by Lemma \ref{ebf-2}, we see that
$$
\End_{\mathcal H\left(G, \ L, \ \mathcal F\right)}\left(\left(V^{ext}_{\mathcal F}\right)^L\right)
$$
is a matrix algebra of size  $t\times t$ with coefficients in $\mathcal F$. Since, by already proved part (iii) of the corollary,
the module $\left(V^{ext}_{\mathcal F}\right)^L$ is a direct sum of finite number of copies of $\left(V^{ext}(\mathcal F)\right)^L$, we conclude that
the number of copies is equal to $t$ and 
$$
\End_{\mathcal H\left(G, \ L, \ \mathcal F\right)}\left(\left(V^{ext}(\mathcal F)\right)^L\right)=\mathcal F.
$$
Finally, Theorem \ref{aeir-1} and Corollary \ref{eib-99} complete the proof. 
\end{proof}

\section{An Example: Construction of Unramified Irreducible Representions}\label{cuir}

Let $k$ be a non--Archimedean local field. Let $\mathcal O\subset k$ be its ring of integers, and
let $\varpi$ be  a generator of the maximal ideal in $\calO$. Let $q$ be the number of elements in the residue field $\mathcal O/ \varpi\mathcal O$.
Let $G$  be a  $k$--split Zariski connected reductive group. To simplify notation we write $G$ for the group $G(k)$ of $k$--points. Similarly, we do for other algebraic subgroups defined over $k$.

Let 
$$K\overset{def}{=}G(\mathcal O)
$$
is a hyperspecial
maximal compact subgroup of $G$ (\cite{Tits}, 3.9.1). We normalize a Haar measure on $G$  such that $\int_{K}dg=1$  (see Section \ref{eir}).

We recall the  structure of the algebra
$$
\mathcal H\left(G, \  K, \ \mathbb C\right)
$$
is  obtained via Satake isomorphism \cite{cartier}. In more detail,  let $A$ be  a maximal $k$--split torus of $G$.
Let $X^*(A)$ (resp., $X_*(A)$) be the group of $k$--rational
characters (resp., cocharacters) of $A$. Let $W$ be the the Weyl group of
$A$ in $G$. The group $W$ acts on $A$ and its complex dual
torus $\hat{A}$.  The Satake
isomorphism  enables us to identify
$\mathcal H\left(G, \  K, \ \mathbb C\right)$  with the algebra of $W$--invariants
$$
\mathbb C[X^*(\hat{A})]^{W}
$$
where
$$
\mathbb C[X^*(\hat{A})]
$$
is the $\mathbb C$-group algebra of finitely generated free Abelian group.  This is also  the algebra of regular functions
on complex algebraic torus $\hat{A}$. The action of $W$ on the torus is algebraic, and therefore
Let
$$
X\overset{def}{=}\hat{A}/W,
$$
is the complex affine variety  of $W$--orbits in $\hat{A}$. Its algebra of regular functions is
$$
\mathbb C[X]=  \mathbb C[X^*(\hat{A})]^{W}.
$$
Thus, the Satake isomorphism identifies $\mathcal H\left(G, \  K, \ \mathbb C\right)$ with $\mathbb C[X]$  (it depends
on the choice of a  Borel subgroup $B=AU$ of $G$, where $U$ is the unipotent radical).

By standard Nullstellensatz, a point $x\in X$ defines a maximal ideal in $\mathfrak m_x$ in $\mathcal H\left(G, \  K, \ \mathbb C\right)$. 
Then, we apply Theorem \ref{eir-9} to construct irreducible (admissible) $(\mathbb C, \ G)$--module unramified representations  $\calV(\mathfrak m_x, K)$. We have
\begin{align*}
  \calV(\mathfrak m_x, K)^K &\simeq \mathcal H\left(G, \  K, \ \mathbb C\right)/ \mathfrak m_x\\
                           &\simeq \mathbb C \ \ \text{a one dimensional module form the evaluation of $\mathbb C[X]$ at $x$.}                   
\end{align*}
as 
$\mathcal H\left(G, \  K, \ \mathbb C\right)$--modules. Different $x\in X$ give rise to non--isomorphic $\calV(\mathfrak m_x, K)$ $(\mathbb C, \ G)$--modules. 
This completes the description of  complex unramified representations in terms of Hecke algebra $ \mathcal H\left(G, \  K, \ \mathbb C\right)$.

\vskip .2in
By a careful analysis of $\mathbb Z$--structure of Satake isomorphim \cite{gross} due to Gross,
we obtain the following:

\begin{Lem}\label{cuir-1} Let $\mathcal A$ be field which is any extension of $\mathbb Q$ if $G$ is simply--connected, or just an extension of $\mathbb Q(q^{1/2})$ otherwise. Then, we have
  the following isomorphism of $\mathcal A$--algebras. 
  $$
  \mathcal H\left(G, \  K, \ \mathcal A\right)\simeq  \mathcal A\otimes_{\mathbb Q} \ \mathbb Q[X^*(\hat{A})]^{W}= \mathcal A[X^*(\hat{A})]^{W}.
  $$
\end{Lem}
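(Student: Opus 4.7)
The plan is to derive the isomorphism by flat base change from Gross's integral refinement of the Satake isomorphism in \cite{gross}. First I would recall the content of that work. With the Haar measure normalized so that $K$ has volume one (the convention fixed in Section \ref{eir}), the classical Satake isomorphism descends to an isomorphism of $\mathbb Z$--algebras
$$
\mathcal H(G, K, \mathbb Z) \simeq \mathbb Z[X^*(\hat{A})]^W
$$
when $G$ is simply connected, and more generally to an isomorphism of $\mathbb Z[q^{1/2}]$--algebras
$$
\mathcal H(G, K, \mathbb Z[q^{1/2}]) \simeq \mathbb Z[q^{1/2}][X^*(\hat{A})]^W
$$
in the general split case. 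The factor $q^{1/2}$, or its absence, is dictated by the square root of the modulus character $\delta_B$ appearing in the definition of the Satake transform.

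Next, the construction of the Hecke algebra over a coefficient ring, as presented in Section \ref{eir}, gives for any ring $\mathcal R$ the identification
$$
\mathcal H(G, K, \mathcal R) = \mathcal R \otimes_{\mathbb Z} \mathcal H(G, K, \mathbb Z).
$$
This is exactly the argument carried out there over $\mathbb Q$, and it goes through verbatim over $\mathbb Z$ because the structure constants computed in (\ref{eir-1})--(\ref{eir-3}) are non--negative integers under the chosen Haar normalization. Tensoring Gross's integral isomorphism with $\mathcal A$ over $\mathbb Z$ in the simply connected case (respectively over $\mathbb Z[q^{1/2}]$ in the general case, using the hypothesis $\mathbb Q(q^{1/2}) \subset \mathcal A$ to supply the necessary ring map) thus yields
$$
\mathcal H(G, K, \mathcal A) \simeq \mathcal A \otimes_{\mathbb Z} \mathbb Z[X^*(\hat{A})]^W \simeq \mathcal A[X^*(\hat{A})]^W,
$$
where the second isomorphism uses that the formation of $W$--invariants commutes with the flat extension $\mathbb Z \to \mathcal A$: any $W$--invariant element of $\mathcal A[X^*(\hat{A})]$ has an integer denominator which, upon clearing, places it in $\mathbb Z[X^*(\hat{A})]^W$.

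The substantive content is therefore entirely contained in Gross's theorem, and the main obstacle here is really one of careful bookkeeping rather than of new argument. Specifically, one must verify that Gross's normalizations --- the Haar measure on $G$, the choice of Borel subgroup $B = AU$ entering the Satake transform, and the resulting appearance (or absence) of $q^{1/2}$ --- agree with the conventions fixed in this paper, so that the integral isomorphism transports cleanly to the setting at hand.
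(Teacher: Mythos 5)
Your overall strategy matches the paper exactly: the paper gives no proof for this lemma, instead pointing directly to Gross's analysis of the $\mathbb Z$-structure of the Satake isomorphism, and your proposal is a fleshed-out version of that same citation plus base change. The target-side identification $\mathcal A \otimes_{\mathbb Q}\mathbb Q[X^*(\hat A)]^W \simeq \mathcal A[X^*(\hat A)]^W$ via flatness (or averaging over the finite group $W$, which is invertible since $\mathbb Q \subset \mathcal A$) is fine, and the source-side identification $\mathcal H(G,K,\mathcal A) = \mathcal A \otimes_{\mathbb Z}\mathcal H(G,K,\mathbb Z)$ is the correct bridge.

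There is, however, a concrete error in the justification of that bridge. You assert that the structure constants computed in (\ref{eir-1})--(\ref{eir-3}) are non--negative integers under the normalization $\int_K dg = 1$, so that the paper's argument "goes through verbatim over $\mathbb Z$." This is false as stated. Those formulas compute $1_{xL}\star 1_{yL}$ in the basis $\{1_{zL}\}$ of the \emph{full} Hecke algebra $\mathcal H(G, \mathcal A)$, not the $K$--biinvariant subalgebra, and the nonzero coefficient there is $vol\left(L\cap yLy^{-1}\right) = 1/[L : L\cap yLy^{-1}]$, a \emph{reciprocal} of a positive integer. For $G = \GL{2}(k)$, $L = K = \GL{2}(\mathcal O)$, $y = \diag(\varpi,1)$, this is $1/(q+1)$, which is not in $\mathbb Z$. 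What you actually need is the classical integrality of Hecke operators: in the double--coset basis $\{1_{KxK}\}$ of $\mathcal H(G,K,\cdot)$, with $vol(K)=1$, the product $1_{KxK}\star 1_{KyK}$ has non--negative integer coefficients (because $1_{KxK}\star 1_{KyK}(g) = \sum_{i=1}^{[KxK:K]} 1_{KyK}(h_i^{-1}g)$ for a choice of coset representatives $KxK = \sqcup_i h_iK$, using $vol(K)=1$), so $\mathcal H(G,K,\mathbb Z)$ is indeed a $\mathbb Z$--subalgebra. That is a short and standard argument, but it is a \emph{different} computation from (\ref{eir-1})--(\ref{eir-3}), so the citation should be replaced rather than the formulas invoked as giving integrality directly. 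With that fix, the proposal is sound and coincides in substance with the paper's appeal to Gross.
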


\vskip .1in
Since $\hat{A}$ is a split torus, it is defined over $\mathbb Q$ (and consequently all extensions of $\mathbb Q$) by considering the group algebra $\mathbb Q[X^*(\hat{A})]$. The action of $W$
on $\hat{A}$ preserves $\mathbb Q[X^*(\hat{A})]$ and consequently it is defined over $\mathbb Q$. This implies  that the variety $X$ is defined over $\mathbb Q$ via
$\mathbb Q[X^*(\hat{A})]^{W}$.

\vskip .2in
Now, we prove the main result of this section and of the paper.

\begin{Thm}\label{cuir-4} Let $k$ be a non--Archimedean local field. Let $\mathcal O\subset k$ be its ring of integers, and
  let $\varpi$ be  a generator of the maximal ideal in $\calO$. Let $q$ be the number of elements in the residue field $\mathcal O/ \varpi\mathcal O$.
  Assume that is $G$  is a  $k$--split Zariski connected reductive group. Let $A$ be its maximal $k$--split torus, and $W$ the corresponding group.
  We write $\hat{A}$ for the complex torus dual to $A$. Let $W$ be the Weyl group of $A$ in $G$. The orbit space
  $$
 X\overset{def}{=}\hat{A}/W
 $$
is an affine variety defined over $\mathbb Q$.  Let $K=G(\mathcal O)$ be its  hyperspecial
maximal compact subgroup of $G$.  We normalize a Haar measure on $G$  such that $\int_{K}dg=1$  (see Section \ref{eir}). Let $\overline{\mathbb Q}$ be the algebraic closure of $\mathbb Q$ inside $\mathbb C$.
Let $\mathcal A$ be any  subfield of $\overline{\mathbb Q}$ if $G$ is simply--connected, or a extension  of $\mathbb Q(q^{1/2})$ in $\overline{\mathbb Q}$ otherwise.  We define the (commutative) Hecke algebra
$\mathcal H\left(G, \  K, \ \mathcal A\right)$ with respect to above fixed Haar measures. Then, we have the following:

\begin{itemize}
\item[(i)] (Satake isomorphims over subfields of $\overline{\mathbb Q}$)
  Maximal ideals in $\mathcal H\left(G, \  K, \ \mathcal A\right)$ are parame\--trized by points in $X(\overline{\mathbb Q})$ such that  points in $X(\overline{\mathbb Q})$
  give the same maximal ideal  if and only if they are  $Gal(\overline{\mathbb Q}/\mathcal A)$--conjugate: for $x\in  X(\overline{\mathbb Q})$, we denote by
  $\mathfrak m_{x, \mathcal A}$ the corresponding maximal ideal.   The corresponding
  quotient $\mathcal H\left(G, \  K, \ \mathcal A\right)/\mathfrak m_{x, \mathcal A}$ is  denoted by $F(x, \mathcal A)$. It is a finite (field)  extension of $\mathcal A$, and it also naturally
  irreducible $\mathcal A$--admissible $\mathcal H\left(G, \  K, \ \mathcal A\right)$--module. The map $Gal(\overline{\mathbb Q}/\mathcal A).x\longmapsto  F(x, \mathcal A)$  is a bijection 
  between $Gal(\overline{\mathbb Q}/\mathcal A)$--orbits in  $X(\overline{\mathbb Q})$, and the set of equivalence classes of irreducible $\mathcal A$--admissible irreducible
  $\mathcal H\left(G, \  K, \ \mathcal A\right)$--modules.

\item[(ii)] For each $x\in X(\overline{\mathbb Q})$, the $(\mathcal A, \  G)$--module (see Theorem \ref{eir-9} for the notation)
  $$
  \mathcal V(x, \mathcal A)\overset{def}{=}\mathcal V(\mathfrak m_x, K)
  $$
  is an irreducible and {\bf $\mathcal A$--admissible} $(\mathcal A, \ G)$--module. We have
  $$
  \mathcal V^K(x, \mathcal A)\simeq \mathcal B_{x, \mathcal A}
  $$ as $\mathcal H\left(G, \  K, \ \mathcal A\right)$--modules, 
  and
  $$
  \End_{(\mathcal A, \ G)} \left( \mathcal V(x, \mathcal A)\right)\simeq F(x, \mathcal A).
$$

\item[(iii)] $\mathcal V(x, \mathcal A)$ is absolutely irreducible  if and only if $x\in X(\mathcal A)$.

\item[(iv)] Let $x\in X(\overline{\mathbb Q})$. Then, for any Galois extension  $\mathcal A\subset \mathcal B$ which contains $F(x, \mathcal A)$,
  $\mathcal V(x, \mathcal B)$ is absolutely irreducible. Moreover, there exist $t=\dim_{\mathcal A} \ F(x, \mathcal A)$ 
  mutually different elements (among them $x$) in $Gal(\overline{\mathbb Q}/\mathcal B).x$, say $x=y_1, y_2, \ldots, y_t$, 
  such that  we have the following:
  $$
  \left( \mathcal V(x, \mathcal A)\right)_{\mathcal B}= \mathcal B\otimes_{\mathcal A} \  \mathcal V(x, \mathcal A)\simeq \mathcal V(x, \mathcal B)
\oplus   \mathcal V(y_2, \mathcal B)
\oplus \cdots \oplus  \mathcal V(y_t, \mathcal B).
$$
Furthermore,   $V(x, \mathcal B),  \mathcal V(y_2, \mathcal B),  \ldots,  \mathcal V(y_t, \mathcal B)$ are mutually non--isomorphic $(\mathcal B, \ G)$--modules.

\item[(v)] (Classification of unramified admissible representations  subfields of $\overline{\mathbb Q}$) The map
  $$
  Gal(\overline{\mathbb Q}/\mathcal A).x\longmapsto  \mathcal V(x, \mathcal A)
  $$
  is a bijection between $Gal(\overline{\mathbb Q}/\mathcal A)$--orbits in  $X(\overline{\mathbb Q})$, and the set of equivalence classes of unramified $\mathcal A$--admissible
  irreducible  $(\mathcal A, \ G)$--modules. 
\end{itemize}
  \end{Thm}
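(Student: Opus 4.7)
The plan is to combine the Satake isomorphism over $\mathcal A$ (Lemma \ref{cuir-1}), the construction from Theorem \ref{eir-9}, the admissibility transfer Corollary \ref{ebf-0}, the endomorphism description in Theorem \ref{aeir-1}, the absolute irreducibility criterion Corollary \ref{eib-99}, and the base change Theorem \ref{aaeir-1}, together with the Appendix lemma relating maximal ideals of $\mathcal A[X]$ to Galois orbits of $X(\overline{\mathbb Q})$.

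First I would establish (i): Lemma \ref{cuir-1} identifies $\mathcal H(G, K, \mathcal A)$ with the coordinate ring $\mathcal A[X]$ of the $\mathbb Q$-variety $X = \hat A/W$. The Appendix lemma then supplies the bijection between maximal ideals of $\mathcal A[X]$ and $\mathrm{Gal}(\overline{\mathbb Q}/\mathcal A)$-orbits in $X(\overline{\mathbb Q})$, with residue field $F(x, \mathcal A)$ a finite extension of $\mathcal A$. Since $\mathcal H(G, K, \mathcal A)$ is commutative, every irreducible $\mathcal A$-admissible $\mathcal H(G, K, \mathcal A)$-module is annihilated by a maximal ideal (Corollary \ref{eir-900}) and hence factors through some $F(x, \mathcal A)$, giving the bijection claimed. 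For (ii), I apply Theorem \ref{eir-9} to $\mathfrak m_{x, \mathcal A}$ to construct $\mathcal V(x, \mathcal A)$ with $\mathcal V(x, \mathcal A)^K \simeq F(x, \mathcal A)$. Since $\mathcal A \subseteq \overline{\mathbb Q} \subset \mathbb C$ and $\dim_{\mathcal A} F(x, \mathcal A) < \infty$, Corollary \ref{ebf-0} forces $\mathcal A$-admissibility. The endomorphism computation comes from Theorem \ref{aeir-1}: $\mathrm{End}_{(\mathcal A, G)}(\mathcal V(x, \mathcal A)) \simeq \mathrm{End}_{\mathcal H(G, K, \mathcal A)}(F(x, \mathcal A)) = F(x, \mathcal A)$, since the action of the commutative algebra $\mathcal H(G, K, \mathcal A)$ factors through $F(x, \mathcal A)$, whose self-endomorphisms as a module over itself are given by multiplication.

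Part (iii) is then immediate: Corollary \ref{eib-99} says absolute irreducibility is equivalent to $\mathrm{End}_{(\mathcal A, G)}(\mathcal V(x, \mathcal A)) = \mathcal A$, i.e.\ $F(x, \mathcal A) = \mathcal A$, i.e.\ the Galois orbit of $x$ is trivial, i.e.\ $x \in X(\mathcal A)$. For (iv), since $\mathcal B/\mathcal A$ is Galois and contains $F(x, \mathcal A)$, the orbit $\mathrm{Gal}(\overline{\mathbb Q}/\mathcal A).x$ of size $t = [F(x, \mathcal A) : \mathcal A]$ splits into $t$ singleton $\mathrm{Gal}(\overline{\mathbb Q}/\mathcal B)$-orbits $\{x = y_1\}, \{y_2\}, \ldots, \{y_t\}$, each with $y_i \in X(\mathcal B)$. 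On $K$-invariants,
\[
\mathcal B \otimes_{\mathcal A} F(x, \mathcal A) \simeq \prod_{i=1}^t F(y_i, \mathcal B) \simeq \mathcal B^t,
\]
the product running over primes of $\mathcal B \otimes_{\mathcal A} F(x, \mathcal A)$; these are precisely the maximal ideals $\mathfrak m_{y_i, \mathcal B}$ of $\mathcal H(G, K, \mathcal B)$ above $\mathfrak m_{x, \mathcal A}$. Applying Theorem \ref{aaeir-1} to $V = \mathcal V(x, \mathcal A)$, the summands $V_i$ are irreducible $(\mathcal B, G)$-modules with $V_i^K \simeq F(y_i, \mathcal B) = \mathcal B$, so by the uniqueness part of Theorem \ref{eir-9}(iv) each $V_i$ is $\mathcal V(y_i, \mathcal B)$, which is absolutely irreducible by (iii). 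Finally (v) follows by combining (i)--(iv) with Lemma \ref{eir-6}: any unramified $\mathcal A$-admissible irreducible $V$ has $V^K \neq 0$ irreducible and $\mathcal A$-admissible over $\mathcal H(G, K, \mathcal A)$, hence isomorphic to some $F(x, \mathcal A)$, and then $V \simeq \mathcal V(x, \mathcal A)$ by uniqueness in Theorem \ref{eir-9}(iv).

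The main obstacle is matching the abstract decomposition produced by Theorem \ref{aaeir-1} with the Galois-orbit parametrization: one must verify that the $t$ summands $V_i$ output by that theorem are in natural bijection with the Galois conjugates $y_i$ of $x$ and correspond to the primes of $\mathcal B \otimes_{\mathcal A} \mathcal H(G, K, \mathcal A)/\mathfrak m_{x, \mathcal A}$. This is where the commutativity of the spherical Hecke algebra and the Appendix lemma are essential — in the commutative setting the ideals $J_i$ in the proof of Theorem \ref{aaeir-1} are simply the pullbacks of the $\mathfrak m_{y_i, \mathcal B}$, and the separability of $\mathcal B/\mathcal A$ ensures that $\mathcal B \otimes_{\mathcal A} F(x, \mathcal A)$ is a product of fields rather than containing nilpotents, which is precisely what forces the number of summands to equal $t$ and each one to be absolutely irreducible.
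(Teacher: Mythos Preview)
Your proposal is correct and follows essentially the same route as the paper: Lemma~\ref{cuir-1} plus the Appendix Lemma~\ref{acuir-4} for (i), Theorem~\ref{eir-9} and Corollary~\ref{ebf-0} and Theorem~\ref{aeir-1} for (ii), Corollary~\ref{eib-99} for (iii), Theorem~\ref{aaeir-1} together with the tensor decomposition $\mathcal B\otimes_{\mathcal A}F(x,\mathcal A)\simeq \mathcal B^t$ for (iv), and Lemma~\ref{eir-6}(ii) for (v). Your handling of (iv) via the observation that the $\mathrm{Gal}(\overline{\mathbb Q}/\mathcal A)$-orbit of $x$ breaks into $t$ singleton $\mathrm{Gal}(\overline{\mathbb Q}/\mathcal B)$-orbits is slightly more direct than the paper's argument, but the underlying content is identical.
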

\begin{proof} It is obvious that the algebraic closure of $\mathcal A$ is   $\overline{\mathbb Q}$. This means that we can apply Lemma \ref{acuir-4} to any affine $\mathcal A$--variety.
  We apply it to $X$ which has the structure of affine $\mathcal A$--variety by letting
  $$
  \mathcal A[X]=\mathcal A\otimes_{\mathbb Q} \ \mathbb Q[X^*(\hat{A})]^{W}=\mathcal A[X^*(\hat{A})]^{W}.
  $$
  We identify $ \mathcal H\left(G, \  K, \ \mathcal A\right)$ with $\mathcal A[X]$ via $\mathcal A$--algebras isomorphism given by Lemma \ref{cuir-1}. 
  
By Lemma \ref{acuir-4}, for each $x\in X(\overline{\mathbb Q})$, there exists a unique maximal ideal $\mathfrak m_{x, \mathcal A} \subset \mathcal  A[X]$ such that
  $\mathfrak m_{x, \mathcal A}$ is the kernel of $\mathcal A$--algebra homomorphism $\mathcal A[X]\longrightarrow \overline{\mathbb Q}$ given by the evaluation at $x$;
  the image is a finite (field) extension, denoted by $F(x, \mathcal A)$  of $\mathcal A$. Two points in $X(\overline{\mathbb Q})$
  gives the same maximal ideal in $\subset \mathcal  A[X]$  if and only if they are  $Gal(\overline{\mathbb Q}/Q)$--conjugate.
  Now, (i) easily follows.

  In (ii), we use explicit  construction of $ \mathcal V(x, \mathcal A)\overset{def}{=}\mathcal V(\mathfrak m_x, K)$ from Theorem \ref{eir-9}. The isomorphism
  $\mathcal V^K(x, \mathcal A)\simeq \mathcal B_{x, \mathcal A}$ as $\mathcal H\left(G, \  K, \ \mathcal A\right)$--modules also follows from Theorem \ref{eir-9}.
  The deep thing is the fact that $ \mathcal V(x, \mathcal A)$ is $\mathcal A$--admissible. This is a consequence of our assumption that $\mathcal A\subset \overline{\mathbb Q}\subset \mathbb C$
  and Corollary \ref{ebf-0} to   Theorem \ref{aaeir-1}. Next, by Theorem \ref{aeir-1}, we have
 
  $$
  \End_{(\mathcal A, \ G)} \left( \mathcal V(x, \mathcal A)\right)\simeq  \End_{\mathcal H\left(G, \  K, \ \mathcal A\right)} \left( \mathcal V^K(x, \mathcal A)\right)
$$
But, since we have the following isomorphism of $\mathcal A$--algebras
$$
\mathcal H\left(G, \  K, \ \mathcal A\right)/\mathfrak m_{x, \mathcal A}\simeq F(x, \mathcal A),
$$
we have 
$$
\End_{\mathcal H\left(G, \  K, \ \mathcal A\right)} \left( \mathcal V^K(x, \mathcal A)\right)\simeq \End_{F(x, \mathcal A)} \left(F(x, \mathcal A)\right)=F(x, \mathcal A).
$$
This proves (ii).

(iii) follows from the characterization of absolutely irreducible modules given by Corollary \ref{eib-99}. Indeed, $\mathcal V(x, \mathcal A)$ is absolutely irreducible if and only if
$$
\End_{(\mathcal A, \ G)} \left( \mathcal V(x, \mathcal A)\right)\simeq \mathcal A.
$$
By (ii), we must have
$$
F(x, \mathcal A)= \mathcal A.
$$
Using the notation from the beginning of the proof, we have
$$
\mathcal A[x]/\mathfrak m_{x, \mathcal A}=F(x, \mathcal A)= \mathcal A.
$$
This equivalent to $x\in X(\mathcal A)$ by general theory of affine $\mathcal A$--varieties. This proves (iii). (v) follows from (i), (ii), and Lemma \ref{eir-6} (ii). 

Finally, we prove (iv). By Theorem \ref{aaeir-1}, there exists 
  irreducible  $(\mathcal B, \ G)$--modules
  $V_1, \ldots, V_t$ such that the following holds:
  \begin{itemize}
  \item[(i)] $V^K_i\neq 0$ for all  $1\le i\le t$. 
  \item[(ii)] $V^K_i$ are  $\mathcal B$--admissible  irreducible $\mathcal H\left(G, \ K, \ \mathcal B\right)$--modules.
  \item[(iii)] $V_{\mathcal B}\overset{def}{=}\mathcal B \otimes_{\mathcal A} V \simeq V_1\oplus \cdots \oplus V_t$ as $(\mathcal B, \ G)$--modules.
  \end{itemize} 
In order to identify modules $V_i$, an  argument from the proof of Lemma \ref{acuir-4} regarding tensor product of fields implies

$$
\mathcal B\otimes_{\mathcal A}  F(x, \mathcal A)= \mathcal B \oplus \cdots \oplus \mathcal B, \ \ \text{($\dim_{\mathcal A} \ F(x, \mathcal A)$) copies.}
$$
This can be considered as a decomposition of
$$
\mathcal H\left(G, \  K, \ \mathcal B\right)=\mathcal B\otimes_{\mathcal A} \mathcal H\left(G, \  K, \ \mathcal A\right)
$$
into irreducible modules. This implies that
$$
t=\dim_{\mathcal A} \ F(x, \mathcal A)
$$
in (iii) above. 

Since we have
$$
\mathcal B[X]= \mathcal B \otimes_{\mathcal A} \mathcal A[X],
$$
and obviously
$$
\mathfrak m_{x, \mathcal A}\mathcal B\subset \mathfrak m_{x, \mathcal B},
$$
we see that evaluation at $x$ for $\mathcal B$ i.e., 
$\mathcal B[X]\longrightarrow  F(x, \mathcal B)$ must come from an epimorphism
$$
 \mathcal B \oplus \cdots \oplus \mathcal B \longrightarrow 
 F(x, \mathcal A).
 $$
 Hence
 $$
 F(x, \mathcal A)=\mathcal B.
 $$
 This means that $x\in X(\mathcal B)$. In particular,  $\mathcal V(x, \mathcal B)$ is absolutely irreducible by (iii).
 Moreover,  each of $t=\dim_{\mathcal A} \ F(x, \mathcal A)$ different projections
 $\mathcal B \oplus \cdots \oplus \mathcal B \longrightarrow \mathcal B$ give rise 
 to the same number of different  epimorphisms of $\mathcal B$--agebras $\mathcal B[X]\longrightarrow \mathcal B$  that factor through $\mathfrak m_{x, \mathcal A}\mathcal B$.
 This means that they must correspond to evaluations at mutually different

 $$
 y_1, \ldots, y_t \in X(\overline{\mathbb Q})
 $$
 which belongs to $V(\mathfrak m_{x, \mathcal A})$ (see Lemma \ref{acuir-4} for the notation). One of them is $x$ as we proved above.
 Hence, they  must be mutually different elements (including $x$) in $ Gal(\overline{\mathbb Q}/\mathcal B).x$ by Lemma \ref{acuir-4} (iii).
 Now, (iv) follows.  We remark that $V(x, \mathcal B),  \mathcal V(y_2, \mathcal B),  \ldots,  \mathcal V(y_t, \mathcal B)$ are mutually non--isomorphic $(\mathcal B, \ G)$--modules.
 Since all $x=y_1, y_2, \ldots, y_t\in X(\mathcal B)$ because of the evaluation at them give $\mathcal B$ as an image.   Then, $\gamma.y_i=y_i$, for all $\gamma\in Gal(\overline{\mathbb Q}/\mathcal B)$, and
 $i=1, \ldots, t$. Now, we apply (v).
\end{proof}

\section{Appendix: A Result On Affine Varieties}\label{acuir}
We prove a simple general lemma which is an exercise for the exposition in (\cite{lang}, IX, Section 1).

\begin{Lem}\label{acuir-4} Let $k$ be a field of characteristic zero. We fix an algebraic closure $\overline{k}$ of $k$. Assume that $Z$ is (not necessarily  irreducible) affine variety
  over $k$. Then, we have the following:
  \begin{itemize}
    \item[(i)] If  $z\in Z(\overline{k})$ is any point, then $k[z]$ is a field. Therefore the kernel  of the $k$--homomorphism $k[Z]\longrightarrow k[z]$ is a maximal ideal, say $\mathfrak m$.
      We have $z\in V(\mathfrak m)$.
    \item[(ii)] Conversely,  let $\mathfrak m \subset k[Z]$ be a maximal ideal. Then, the variety $V(\mathfrak m)$, given as a common set of zeroes of $\mathfrak m$ in $Z(\overline{k})$, has a
      finite number of points. For each $z\in V(\mathfrak m)$, $k$--algebra $k[z]\subset \overline{k}$ is a finite extension of $k$. The evaluation at $z$ gives as isomorphism
      $k[Z]/\mathfrak m\simeq k[z]$ over $k$.
    \item[(iii)] Let $\mathfrak m \subset k[Z]$ be a maximal ideal. The variety $V(\mathfrak m)$ is defined over $k$.  $V(\mathfrak m)$ is a single $Gal(\overline{k}/k)$--orbit.
      The set of $k$--points $V(\mathfrak m)(k)$ of $V(\mathfrak m)$ is not empty if and only if $\mathfrak m$ is the kernel of (a unique)
        evaluation at $z\in Z(k)$. If this is so, $V(\mathfrak m)=\{z\}$.       
      \item[(iv)] $Z(\overline{k})$ is a disjoint union of all $V(\mathfrak m)$, where $\mathfrak m$ ranges over all maximal ideals of $k[Z]$.
   \end{itemize}   
  \end{Lem}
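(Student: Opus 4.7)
The plan is to build everything from one classical fact: Zariski's form of the Nullstellensatz, which says that any finitely generated $k$-algebra that happens to be a field is a finite (algebraic) extension of $k$ (see \cite{lang}, Chapter IX). Since $k[Z]$ is a finitely generated $k$-algebra by hypothesis on $Z$, this lemma will be applied to two kinds of quotients: to the image $k[z]$ of an evaluation and to quotients $k[Z]/\mathfrak{m}$ by maximal ideals.

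For (i), I observe that a point $z\in Z(\overline{k})$ corresponds by definition to a $k$-algebra homomorphism $\mathrm{ev}_z:k[Z]\longrightarrow \overline{k}$, and its image $k[z]$ is a finitely generated $k$-subalgebra of $\overline{k}$. Since $\overline{k}$ is algebraic over $k$, every element of $k[z]$ is algebraic over $k$, so $k[z]$ is an integral domain that is finitely generated and algebraic over $k$; therefore it is a field. Hence $\ker(\mathrm{ev}_z)$ is a maximal ideal $\mathfrak{m}$, and the identity $\mathrm{ev}_z(f)=f(z)=0$ for $f\in\mathfrak{m}$ gives $z\in V(\mathfrak{m})$. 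For (ii), given a maximal ideal $\mathfrak{m}\subset k[Z]$, the quotient $K=k[Z]/\mathfrak{m}$ is a finitely generated $k$-algebra and a field, hence a finite extension of $k$ by Zariski's lemma. Choosing any $k$-embedding $\iota:K\hookrightarrow\overline{k}$ and composing with the projection gives a $k$-algebra map $k[Z]\to\overline{k}$, i.e. a point $z\in Z(\overline{k})$ lying in $V(\mathfrak{m})$; and evaluation at such a $z$ factors through $K$ as required, so $k[z]\simeq k[Z]/\mathfrak{m}$. Conversely, any $z\in V(\mathfrak{m})$ arises this way, so $V(\mathfrak{m})$ is in bijection with the set of $k$-embeddings of $K$ into $\overline{k}$; since $K/k$ is finite (and separable, as $k$ has characteristic zero), this set is finite.

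For (iii), the variety $V(\mathfrak{m})$ is cut out by an ideal $\mathfrak{m}\subset k[Z]$ defined over $k$, hence defined over $k$, and $\mathrm{Gal}(\overline{k}/k)$ acts on $V(\mathfrak{m})$ by its action on the coordinates of $\overline{k}$. Transitivity follows because any two $k$-embeddings of the finite extension $K=k[Z]/\mathfrak{m}$ into $\overline{k}$ differ by a $k$-automorphism of $\overline{k}$, by standard Galois theory. If $V(\mathfrak{m})(k)$ contains some $z\in Z(k)$, then the evaluation at $z$ gives a $k$-algebra map $K\to k$; being a map of fields it is injective, so $K=k$, whence by the bijection from (ii) the set $V(\mathfrak{m})$ has exactly one element, namely $z$. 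Conversely, if $\mathfrak{m}$ is the kernel of evaluation at some $z\in Z(k)$, then $z\in V(\mathfrak{m})(k)$ is non-empty.

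Finally, (iv) is a bookkeeping consequence of (i) and (ii): every $z\in Z(\overline{k})$ lies in $V(\ker\mathrm{ev}_z)$ by (i), and if $z\in V(\mathfrak{m}_1)\cap V(\mathfrak{m}_2)$ for two maximal ideals, then both are contained in $\ker(\mathrm{ev}_z)$, which is itself a proper ideal, forcing $\mathfrak{m}_1=\mathfrak{m}_2$ by maximality. I do not expect a serious obstacle; the only delicate point is being clean about which direction of the Nullstellensatz is being used (Zariski's lemma for points over $\overline{k}$, not the strong Nullstellensatz), and about the identification of $V(\mathfrak{m})$ with the set of $k$-embeddings $k[Z]/\mathfrak{m}\hookrightarrow\overline{k}$, which is what makes the Galois transitivity in (iii) immediate.
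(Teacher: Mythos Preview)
Your proof is correct and follows the same overall strategy as the paper: use that $k[Z]$ is finitely generated, invoke a form of the Nullstellensatz, and then use separability/Galois theory to control $V(\mathfrak m)$. The one noteworthy difference is in the treatment of (ii) and its link to (iii). You identify $V(\mathfrak m)$ directly with the set of $k$-embeddings $k[Z]/\mathfrak m\hookrightarrow\overline{k}$, which immediately gives finiteness and makes the Galois transitivity in (iii) a one-line consequence of standard Galois theory. The paper instead computes explicitly: it sets $I=\mathfrak m\cdot\overline{k}[Z]$, chooses a primitive element $\alpha$ for $F=k[Z]/\mathfrak m$ over $k$, and expands $\overline{k}\otimes_k F\simeq\overline{k}^{\,u}$ via the factorization of the minimal polynomial of $\alpha$, thereby identifying $\overline{k}[Z]/I$ with $\overline{k}^{\,u}$ and reading off that $V(\mathfrak m)$ has exactly $u=[F:k]$ points. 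Your route is cleaner for the lemma itself; the paper's tensor-product computation, however, is not wasted effort, since exactly this decomposition $\mathcal B\otimes_{\mathcal A}F(x,\mathcal A)\simeq\mathcal B^{\,t}$ is reused verbatim in the proof of Theorem~\ref{cuir-4}~(iv).
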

\begin{proof} We start with the following observation. The algebra $k[Z]$ is finitely generated $k$--algebra, say $f_1, \ldots, f_t$ are generators.
  Let $z\in Z(\overline{k})$. Then $f(z_i)\in \overline{k}$ and consequently $k[f(z_i)]$ is finite (field) extension of $k$. $k[z]$ is by definition $k[f(z_1), f(z_2), \ldots, f(z_t)]$ and it is a field
  and finite extension of $k$ by elementary field theory. This implies (ii).

  Let us prove (ii). We consider the ideal $I\subset \overline{k}[Z]$ defined by $I=\mathfrak m \cdot \overline{k}[Z]$. Then, obviously,
  $$
  Z(I)=Z(\mathfrak m).
  $$
  Now, by (\cite{lang}, IX, Section 1, Theorem 1.5), we have 
  $$
  Z(\mathfrak m)\neq \emptyset.
  $$
  Hence, $I$ is proper ideal. Also, there exists $z\in Z(\mathfrak m)$. For such $z$, $k[Z]/\mathfrak m\simeq k[z]$ is a finite extension of $k$. Let us put
  $$
  F=k[Z]/\mathfrak m.
  $$
  Then, since $k\subset F$ is finite and separabe extension (because $k$ has characteristic zero), there exists $\alpha \in F$ such that
  $$
  F=k(\alpha).
  $$
  Let $P\in k[T]$ be a minimal polynomial of $\alpha$, where $T$ is a variable. Then, let
  $$
  \alpha_1, \alpha_2, \ldots, \alpha_u, \ \ u=\deg{(P)},
  $$ 
  be all zeroes of $P$ in $\overline{k}$. They are all distinct.  The reader my easily check that
  \begin{equation} \label{acuir-5}
  \overline{k}\otimes_k  F\simeq \overline{k}\oplus \cdots \oplus \overline{k} \ \ \text{(a copy of $\overline{k}$ for each $\alpha_i$.)}
  \end{equation}
  Indeed, we have the following elementary and well--known computation:

  \begin{align*}
    \overline{k}\otimes_k  F &\simeq  \overline{k} \otimes_k  k[T]/k[T]P \\
    & \simeq   \overline{k}/ \overline{k}[T]P\\
    & =\overline{k}[T]/ \overline{k}[T](T-\alpha_1)(T-\alpha_2)\cdots (T-\alpha_u)\\
    &\simeq \oplus_{i=1}^u \ \overline{k}[T]/ \overline{k}[T](T-\alpha_i) \\
    &\simeq \oplus_{i=1}^u \ \overline{k}.
  \end{align*}

  We observe that (\ref{acuir-4}) implies 
  $$
  \overline{k}[Z]/I\simeq \overline{k}\otimes_k k[Z]/\overline{k}\otimes_k \mathfrak m \simeq \overline{k}\otimes_k \left(k[Z]/\mathfrak m\right)
  \simeq   \oplus_{i=1}^u \ \overline{k}.
  $$
  This shows that $I$ is a radical ideal since the right--hand side has no nilpotent elements. Hence,  $\overline{k}[Z]/I$ is algebra of regular functions on $V(\mathfrak m)$.
  And,  most importantly, $V(\mathfrak m)$ is a finite non--empty set. The rest of (ii) is clear. Next, (iv) is obvious from (i) and (iii). Finally, we prove (iii). It is well--known
  that $V\overset{def}{=}V(\mathfrak m)$ is defined over. Indeed, this follow from above considerations also. We have shown $\overline{k}[V]= \overline{k}[Z]/I$. If we let, 
  $k[V]=k[Z]/\mathfrak m$. Then, above isomorphism can be restated $\overline{k}[V]\simeq \overline{k}\otimes_k k[V]$,
  and it gives the  $k$--structure on $V$.

  To complete the proof of (iii),  we observe that $V(\mathfrak m)$ is a single $Gal(\overline{k}/k)$--orbit. Indeed,  let $z\in V=V(\mathfrak m)$. Then, for 
  $\gamma\in Gal(\overline{k}/k)$, we have $\gamma.z\in V(\mathfrak m)$ since by the definition of the Galois action on $Z$: 
  $$
  f(\gamma.z)=\gamma^{-1}(f(z))=\gamma(0)=0.
  $$
  Conversely, if $z$ and $z'$ are in $V$. Then, the fields $k[z]$ and $k[z']$ are isomorphic to $k[V]$ over $k$. Thus, there exists $\gamma\in  Gal(\overline{k}/k)$
  such that $\gamma(k[z])=k[z']$. Equivalently, $k[\gamma.z']=k[z]$.  This means that
  $$
  f(\gamma.z')=f(z), \ \ \text{for all $f\in k[Z]$.}
  $$
  Hence, we have 
  $$
  f(\gamma.z')=f(z), \ \ \text{for all $f\in \overline{k}[Z]$.}
  $$
  This means that
  $$
  \gamma.z'=z.
  $$
  The rest of (iii) is clear. 
  \end{proof}

\end{document}